\newtheorem{theorem}{Theorem}[section]
\newtheorem{lemma}[theorem]{Lemma}
\newtheorem{proposition}[theorem]{Proposition}
\newtheorem{corollary}[theorem]{Corollary}
\theoremstyle{definition}
\newtheorem{definition}[theorem]{Definition}
\newtheorem{example}[theorem]{Example}
\theoremstyle{remark}
\newtheorem{remark}[theorem]{Remark}
\newtheorem*{theorem*}{{\bf Theorem}}
\newtheorem*{assumption*}{{\bf Assumption}}
\let\phi=\varphi
\def\N{\mathbb{N}}
\def\R{\mathbb{R}}
\def\Z{\mathbb{Z}}
\def\Q{\mathbb{Q}}
\def\F{\mathcal{F}}
\def\L{\mathcal{L}}
\def\I{\mathcal{I}}
\def\U{\mathcal{U}}
\def\eps{\varepsilon}
\def\ol{\overline}
\def\ua{\mathord{\uparrow}}
\def\sua{\mathord{\upharpoonright}}
\def\da{\mathord{\downarrow}}
\def\sda{\mathord{\downharpoonright}}
\DeclareMathOperator{\Lex}{Lex}
\DeclareMathOperator{\lex}{lex}
\DeclareMathOperator{\car}{car}
\DeclareMathOperator{\ran}{ran}
\DeclareMathOperator{\supp}{supp}
\newcommand{\comment}[1]{}
\newcommand{\wh}[1]{\widehat #1}
\numberwithin{equation}{section}
\def\sgn{\mathrm{sgn}}
\let\epsilon=\varepsilon
\def\@maketitle{%
  \newpage
  \null
  \vskip 2em%
  \begin{center}%
  \let \footnote \thanks
    {\Large\bfseries \@title \par}%
    \vskip 1.5em%
    {\normalsize
      \lineskip .5em%
      \begin{tabular}[t]{c}%
        \@author
      \end{tabular}\par}%
    \vskip 1em%
    {\normalsize \@date}%
  \end{center}%
  \par
  \vskip 1.5em}
\begin{document}

\setstcolor{red}

\title{\sc \huge Artinian and Noetherian vector lattices}

\author{Marko Kandi\'{c}%
\thanks{Email: \texttt{marko.kandic@fmf.uni-lj.si}}}
\affil{Faculty of Mathematics and Physics, University of Ljubljana, Jadranska 19, SI-1000 Ljubljana, Slovenia}
\affil{Institute of Mathematics, Physics and Mechanics, Jadranska 19, SI-1000 Ljubljana, Slovenia}

\author{Mark Roelands%
\thanks{Email: \texttt{m.roelands@math.leidenuniv.nl}}}
\affil{Mathematical Institute, Leiden University, 2300 RA Leiden,
The Netherlands}

\author{Marten Wortel%
\thanks{Email: \texttt{marten.wortel@up.ac.za}}}
\affil{Department of Mathematics and Applied Mathematics, University of Pretoria, Private Bag X20 Hatfield, Pretoria 0028, South Africa}

\maketitle

\begin{abstract}

In this paper, we study Artinian and Noetherian properties in vector lattices and provide a concrete representation of these spaces. Furthermore, we describe for which Archimedean uniformly complete vector lattices every decreasing sequence of prime ideals is stationary (a property that we refer to as prime Artinian). We also completely characterize the prime ideals in vector lattices of continuous root functions and piecewise polynomials. This is a useful space for studying how having decreasing stationary sequences of prime ideals does not imply having increasing stationary sequences of prime ideals, and vice versa.
\end{abstract}

{\small {\bf Keywords:} } Vector lattices, Artinian, Noetherian, decreasing sequences of prime ideals.

{\small {\bf Subject Classification:} } Primary 46A40; Secondary 46B40.

\section{Introduction} 
Order ideals in vector lattices have been, and still are, extensively studied. Properties of order ideals are closely related to the structure of the vector lattice that contains them. For instance, a vector lattice is finite-dimensional if and only if it has only finitely many order ideals; the existence of a strong unit implies the existence of maximal ideals, see \cite[Theorem 27.4]{Zaanen}; all order ideals are principal if and only if the Archimedean vector lattice is isomorphic to a space of functions with finite support on some set, see \cite[Theorem 5.2]{Kandic-Roelands}. The prime ideals in vector lattices and certain carefully chosen subsets thereof yield representations of the space, see \cite{Johnson-Kist} and \cite{Yosida}. Also, the interplay between order ideals and projection bands was studied in \cite{Huijsmans}. In vector lattices with additional algebraic structure, the relation between order ideals and algebraic ideals was studied in \cite{Huijsmans-dePagter1}, \cite{Huijsmans-dePagter2}, and \cite{dePagter}. Recently, a characterization of projection bands was given in \cite{EugeneBilokopytov}.

This paper studies the implications of stationary properties of order ideals. Similar to the theory of commutative rings, we call a vector lattice Artinian if every decreasing sequence of ideals is stationary, and similarly, if every increasing sequence of ideals is stationary, the vector lattice is called Noetherian. To provide a characterization of these vector lattices, we need the notion of local and local ideals, and the concept of induction and recursion on well-founded sets. Local vector lattices were studied extensively in \cite{Boulabiar19} and also in \cite{Schaefer}, but not explicitly. The main result describing Artinian vector lattices is \Cref{T:characterisation of Artinian vector lattices}, stating that every Artinian vector lattice is isomorphic to a lexicographical vector lattice $\mathrm{Lex}(X)$ for some reverse well-founded forest $X$ with finite width. Furthermore, \Cref{T:characterisation of Noetherian vector lattices} is the main result characterising Noetherian vector lattices. This result states that a vector lattice is Noetherian precisely when it is a sublattice of the lexicographical lattice $\R^X_{\mathrm{lex}}$ containing $\mathrm{Lex}(X)$ for some well-founded forest $X$ with finite width. A related result was proved by Hausner and Wendel in \cite{HausnerWendel} for totally ordered vector lattices. In this case there is a totally ordered set $X$ such that the vector lattice is a sublattice of $\R^X_{\mathrm{lex}}$ containing $\mathrm{Lex}(X)$. Another class of order ideals to study when considering stationary properties are the prime ideals, since all ideals containing a fixed prime ideal are all prime ideals and form a totally ordered set. Vector lattices for which decreasing sequences of prime ideals are stationary are called prime Artinian and the main result characterising Archimedean uniformly complete vector lattices that are prime Artinian is \Cref{T: prime Artinian unif complete}, stating that these are precisely the vector lattices isomorphic to the functions with finite support on some set. Similarly, a vector lattice is called prime Noetherian if every sequence of increasing prime ideals is stationary. Prime Noetherian vector lattices have been studied and characterized in \cite{Kandic-Roelands}. It turns out that an Archimedean uniformly complete vector lattice is prime Artinian if and only if it is prime Noetherian. The reason why these two notion imply each other crucially uses the fact that the vector lattice is uniformly complete. If we consider the vector lattice of continuous piecewise root functions and polynomials, then it can be shown that neither prime Artinian implies prime Noetherian nor prime Noetherian implies prime Artinian in general, the last of the two statements was already shown in \cite{Kandic-Roelands}. 

The structure of the paper is as follows. Section 2 is the preliminary part of this paper. Here we discuss some basic concepts in vector lattices, local ideals in vector lattices, followed by well-founded sets and how to apply induction and recursion on these sets. In Section 3 we mainly focus on representing Artinian and Noetherian vector lattices, and the first part of this section includes results characterizing when vector lattices are Artinian and Noetherian that do not use the above mentioned representations of the space. Section 4 considers decreasing sequences of prime ideals in Archimedean vector lattices, and we characterize prime Artinian Archimedean uniformly complete vector lattices. In Section 5 we study the class of piecewise root functions and polynomials for which we can explicitly compute the prime ideal structure and use this to show that it is generally not the case that prime Artinian vector lattices are prime Noetherian.

\section{Preliminaries}

\subsection{Basic concepts in vector lattices}
In this paper, $E$ will denote a vector lattice with cone $E_+$. We say that $E$ is \emph{Archimedean} if for any $x,y \in E_+$ such that $nx \le y$ for all $n \in \N$ it follows that $x = 0$. For any $x,y,z \in E_+$ satisfying $z \le x + y$, there are $z_1, z_2 \in E_+$ with $z_1 \le x$ and $z_2 \le y$ such that $z_1 + z_2 = z$. This is called the \emph{Riesz decomposition property}. A subspace $I \subseteq E$ is called an \emph{order ideal} or just \emph{ideal} if $|x| \le |y|$ for any $x \in E$ and $y \in I$ implies that $x \in I$. Given a nonempty set $S \subseteq E$, there is a smallest ideal that contains $S$. This ideal will be denoted by $I_S$  and is called the \emph{ideal generated by $S$}. Note that we may chose this set $S$ in $E_+$, as $\{|x|\colon x \in S\}$ generates the same ideal as $S$. If $S = \{x\}$, then $I_S$ is said to be \emph{principal} and we will write $I_x$ instead of $I_{\{x\}}$. If $x \in E_+$ is such that $I_x = E$, then we call $x$ a \emph{strong unit}. A proper ideal $M \subseteq E$ is called \emph{maximal} if for any ideal $J$ such that $M\subseteq J\subseteq E$ it follows that either $J = M$ or $J = E$. If the codimension of an ideal $I$ is one, then it is maximal, and conversely, all maximal ideals have codimension one by \cite[Corollary~p.66]{Schaefer}. A proper ideal $P \subseteq E$ is called \emph{prime} if for any $x,y \in E$ such that $x \wedge y \in P$ it follows that either $x \in P$ or $y \in P$. Furthermore, by \cite[Theorem~33.3]{Zaanen} every maximal ideal is prime. The following useful characterization of prime ideals is proved in \cite[Theorem~33.2]{Zaanen}.

\begin{theorem}
    Let $P$ be an ideal in a vector lattice $E$. Then the following statements are equivalent.
    \begin{itemize}
        \item[$(i)$] $P$ is prime.
        \item[$(ii)$] If $x \wedge y = 0$, then $x \in P$ or $y \in P$.
        \item[$(iii)$] The quotient vector lattice $E/P$ is totally ordered.
        \item[$(iv)$] For any ideals $I$ and $J$ such that $I \cap J \subseteq P$ it follows that $I \subseteq P$ or $J \subseteq P$.
    \end{itemize}
\end{theorem}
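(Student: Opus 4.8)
The plan is to prove the cycle $(i)\Rightarrow(ii)\Rightarrow(iii)\Rightarrow(i)$ and then fold in $(iv)$ through $(i)\Rightarrow(iv)$ and $(iv)\Rightarrow(i)$. Throughout I would use only the elementary identities $x=x^{+}-x^{-}$, $x^{+}\wedge x^{-}=0$, translation invariance of the lattice operations (in particular $(x-c)\wedge(y-c)=(x\wedge y)-c$), and the standard fact that, since $P$ is an order ideal, $E/P$ is again a vector lattice with the canonical surjection $q\colon E\to E/P$ a positive lattice homomorphism, so that $q(x\wedge y)=q(x)\wedge q(y)$ and $q(x^{\pm})=q(x)^{\pm}$.

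The implication $(i)\Rightarrow(ii)$ is immediate because $0\in P$. For $(ii)\Rightarrow(iii)$ I would show that every class $q(z)$ is comparable with $0$; since the order on $E/P$ is translation invariant this suffices for $E/P$ to be totally ordered. Given $z\in E$, apply $(ii)$ to $z^{+}\wedge z^{-}=0$: if $z^{-}\in P$ then $q(z)=q(z^{+})\ge 0$, and if $z^{+}\in P$ then $q(z)=-q(z^{-})\le 0$. For $(iii)\Rightarrow(i)$, if $x\wedge y\in P$ then $q(x)\wedge q(y)=q(x\wedge y)=0$ in the totally ordered lattice $E/P$, and $a\wedge b=0$ in a totally ordered lattice forces $a=0$ or $b=0$; hence $x\in P$ or $y\in P$.

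For $(i)\Rightarrow(iv)$, suppose $I\cap J\subseteq P$ but $I\not\subseteq P$ and $J\not\subseteq P$, and pick $x\in I\setminus P$, $y\in J\setminus P$. Since $P$, $I$, and $J$ are ideals we may replace $x,y$ by $|x|,|y|$ and assume $x,y\ge 0$; then $0\le x\wedge y\le x$ gives $x\wedge y\in I$, and similarly $x\wedge y\in J$, so $x\wedge y\in I\cap J\subseteq P$, contradicting $(i)$. For $(iv)\Rightarrow(i)$, given $x\wedge y\in P$ put $a=x-(x\wedge y)$ and $b=y-(x\wedge y)$; translation invariance gives $a,b\ge 0$ and $a\wedge b=0$. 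Using $I_a=\{z\colon |z|\le na\text{ for some }n\in\N\}$ one checks $I_a\cap I_b=\{0\}$ (if $0\le z\le na$ and $z\le mb$ then $z\le k(a\wedge b)=0$ for $k=\max(n,m)$), so $I_a\cap I_b\subseteq P$ and $(iv)$ yields $a\in P$ or $b\in P$; adding $x\wedge y\in P$ gives $x\in P$ or $y\in P$.

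I do not expect a genuine obstacle here, as this is the classical Zaanen characterisation and every step is routine. The one mildly non-formal point is the principal-ideal identity $I_a\cap I_b=\{0\}$ used in $(iv)\Rightarrow(i)$, which rests only on monotonicity of $\wedge$ and the explicit description of the principal ideal generated by a positive element; everything else is direct manipulation with $x^{\pm}$, disjointness, translation invariance, and the quotient lattice structure.
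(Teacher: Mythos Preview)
Your argument is correct in every step; the cycle $(i)\Rightarrow(ii)\Rightarrow(iii)\Rightarrow(i)$ and the two implications involving $(iv)$ are exactly the standard manipulations, and the only slightly delicate point --- that $I_a\cap I_b=\{0\}$ when $a\wedge b=0$ --- is handled cleanly via $z\le ka\wedge kb=k(a\wedge b)=0$.

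There is nothing to compare against, however: the paper does not prove this theorem at all. It merely quotes the statement and attributes the proof to \cite[Theorem~33.2]{Zaanen}. Your write-up is essentially the classical Zaanen argument, so it is entirely appropriate as a proof here.
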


Given a prime ideal $P$, then every ideal containing $P$ must be prime by \cite[Theorem~33.3]{Zaanen} and furthermore the family of ideals containing $P$ is totally ordered. 

We say that a net $(x_\alpha)_\alpha$ converges \emph{relatively uniformly} to $x$ if there exists some $y \in E_+$, which regulates the convergence, so that for each $\eps > 0$ there is an index $\alpha_\eps$ with the property that $|x_\alpha - x| \le \eps y$ whenever $\alpha \ge \alpha_\eps$. If every relatively uniform Cauchy net converges relatively uniformly, with respect to the same regulator, then $E$ is called \emph{relatively uniformly complete}. If $E$ is Archimedean and given a vector $x \in E_+$, the principal ideal $I_x$ equipped with the norm 
\[
\|y\|_x := \inf \{ \lambda > 0 \colon -\lambda x \le y \le \lambda x\}
\]
is norm dense in its norm completion, which by the Kakutani representation theorem, \cite[Theorem~4.29]{AliprantisBurkinshaw}, is lattice isometric to the Banach lattice $C(K)$ for some compact Hausdorff space $K$. The strong unit $x$ in $I_x$ is then mapped to the constant one function. Hence, if $I_x$ is complete, then it is itself lattice isometric to a $C(K)$-space. Note that a vector lattice $E$ is relatively uniformly complete if and only if $(I_x,\|\cdot\|_x)$ is a Banach lattice for each $x \in E_+$. 

We refer the reader to the standard texts \cite{Zaanen}, \cite{Schaefer}, \cite{AliprantisBurkinshaw}, \cite{AbramovichAliprantis} for further reading or more details about the above mentioned concepts and terminology.  

\subsection{Local ideals}

In this section we discuss an important class of ideals that will be used to describe Noetherian and Artinian vector lattices.

\begin{definition}
Let $E$ be a vector lattice and $L \subseteq E$ an ideal. Then $L$ is defined to be a \emph{local} ideal if $L$ has a proper ideal $M$ containing all other proper ideals in $L$. The space $E$ is defined to be a local vector lattice if $E$ is a local ideal. 
\end{definition}

Local vector lattices were studied in detail in \cite{Boulabiar19} but were already implicitly present in \cite{Schaefer}. Indeed, \cite[Proposition~II.3.7]{Schaefer} shows that $E$ is local with maximal ideal $M$ if and only if $E \cong \R \circ M$, where $\R \circ M$ denotes the lexicographic union: the vector space $\R \times M$ ordered by the cone of elements $(\lambda, x)$ with $\lambda > 0$, or $\lambda = 0$ and $x \in M_+$. The element $(1,0)$ corresponds to a strong unit of $L$. Note that in the isomorphism $E \cong \R \circ M$ the maximal ideal $M$ is unique, but the copy of $\R$ is not. Indeed, any positive element in $E$ not in $M$ can be chosen to span this copy of $\R$, as can be easily seen in the example $E = \R^2_{\lex}$, where $M$ equals the $y$-axis and the copy of $\R$ can be any other line through the origin. In this paper we often choose isomorphisms $I \cong \R \circ M$ for all local ideals $I$ of a vector lattice, therefore the Axiom of Choice is unavoidable in our work.

\begin{lemma}\label{L:local_ideals_comparable_or_disjoint}
Let $E$ be a vector lattice and let $L_1$ and $L_2$ be local ideals in $E$. Then either $L_1$ and $L_2$ are comparable, or $L_1$ and $L_2$ are disjoint.
\end{lemma}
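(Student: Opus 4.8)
The plan is to establish the contrapositive: assuming $L_1$ and $L_2$ are incomparable, I will show that they are disjoint, i.e.\ $L_1 \cap L_2 = \{0\}$ (for ideals this is equivalent to $|x| \wedge |y| = 0$ for all $x \in L_1$, $y \in L_2$, since $|x| \wedge |y| \in L_1 \cap L_2$). Throughout, let $M_1$ and $M_2$ denote the maximal ideals of $L_1$ and $L_2$, each containing all proper ideals of its own $L_i$.

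The first step is to show $L_1 \cap L_2 \subseteq M_1 \cap M_2$. Suppose $0 < z \in L_1 \cap L_2$ with $z \notin M_1$. Then $I_z$ is an ideal of $L_1$ (it is an ideal of $E$ contained in $L_1$) that is not contained in $M_1$, so it cannot be proper, forcing $I_z = L_1$; but $z \in L_2$ then gives $L_1 = I_z \subseteq L_2$, contradicting incomparability. Hence $(L_1 \cap L_2)^+ \subseteq M_1$, and since $L_1 \cap L_2$ is an ideal this yields $L_1 \cap L_2 \subseteq M_1$; by symmetry $L_1 \cap L_2 \subseteq M_2$ as well.

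The second step, which I expect to be the crux, is to derive a contradiction from $L_1 \cap L_2 \neq \{0\}$. Fix $0 < z \in L_1 \cap L_2$, and choose strong units $u_i \in L_i^+ \setminus M_i$ (these exist because $M_i \subsetneq L_i$); as in the first step, $I_{u_i} = L_i$. The idea is to pass to the disjoint pair $v_1 := (u_1 - u_2)^+ = u_1 - u_1 \wedge u_2$ and $v_2 := (u_1 - u_2)^- = u_2 - u_1 \wedge u_2$, which satisfy $v_1 \wedge v_2 = 0$. Since $u_1 \wedge u_2 \le u_i$ lies in $L_1 \cap L_2$, Step 1 puts it in $M_i$; therefore $v_i = u_i - (u_1 \wedge u_2)$ lies in $L_i$ but not in $M_i$, so $I_{v_i} = L_i$ again. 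Now $z \in L_1 = I_{v_1}$ and $z \in L_2 = I_{v_2}$ give $z \le n v_1$ and $z \le m v_2$ for some $n, m \in \N$, whence $0 < z \le (n v_1) \wedge (m v_2) = 0$, using that the disjointness $v_1 \wedge v_2 = 0$ is preserved under positive scaling. This contradiction completes the argument.

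The genuinely substantive point is the construction in the second step: one must produce a disjoint pair of strong units for $L_1$ and $L_2$, and the reason this is possible is exactly the conclusion of Step 1, namely that $u_1 \wedge u_2$ sits inside both maximal ideals, so subtracting it does not destroy the strong-unit property. The remaining ingredients are routine: the properness $M_i \subsetneq L_i$ guarantees positive elements outside $M_i$, the identities $(u_1 - u_2)^{\pm} = u_i - u_1 \wedge u_2$ are standard, and $n a \wedge m b = 0$ whenever $a, b \ge 0$ with $a \wedge b = 0$.
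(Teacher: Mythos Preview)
Your proof is correct. The overall architecture matches the paper's: assume incomparability, first show $L_1\cap L_2\subseteq M_1\cap M_2$, then derive a contradiction from a nonzero intersection. Your Step~1 is the same argument as the paper's, just phrased in terms of principal ideals rather than the lexicographic decomposition $L_i\cong\R e_i\circ M_i$.

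The difference lies in Step~2. You build disjoint strong units $v_i=u_i-u_1\wedge u_2$ and squeeze $z$ between $0$ and $(nv_1)\wedge(mv_2)=0$. The paper instead works directly with $u:=e_1\wedge e_2$: once $u\in M_1\cap M_2$, the lexicographic structure gives $2u\le e_1$ and $2u\le e_2$, hence $2u\le e_1\wedge e_2=u$, contradicting $u>0$. The paper's endgame is a line shorter and exploits the order-theoretic content of $\R\circ M$ (every element of $M_+$ is infinitesimal relative to the strong unit); your version avoids invoking that structure explicitly and instead uses only the defining property that $M_i$ contains all proper ideals, which is arguably more self-contained. Both are perfectly valid.
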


\begin{proof}
Suppose $L_1 \cong \R e_1 \circ M_1$ and $L_2 \cong \R e_2 \circ M_2$ (for some positive $e_1$ and $e_2$) are not comparable. Let $u = e_1 \wedge e_2 \geq 0$. Since $u \leq e_1$, $u \in L_1$ and so $u = \lambda e_1 + u_1$ with $u_1 \in M_1$. Since $u \geq 0$, $\lambda \geq 0$. Suppose $\lambda > 0$. Then $\lambda e_1 + u_1 = u \leq e_2$ and since $\lambda e_1 + u_1$ is an order unit of $L_1$, $L_1 \subseteq L_2$, contradicting the incomparability of $L_1$ and $L_2$. Hence $u = u_1 \in M_1$, and by a similar argument, $u \in M_2$. Suppose $u \not= 0$. Then $2u > u$, $2u \leq e_1$ since $2u \in M_1$, and $2u \leq e_2$ since $2u \in M_2$. This contradicts the fact that $u$ is the greatest lower bound of $e_1$ and $e_2$. Hence $e_1 \wedge e_2 = 0$ and so $L_1$ and $L_2$ are disjoint,  since $e_i$ is a strong unit in $L_i$.
\end{proof} 

The following property of local ideals will be useful later when representing Noetherian vector lattices. 

\begin{proposition}\label{P:local_ideal_in_summand}
Let $E \cong \bigoplus_{i=1}^n L_i$ be a vector lattice where $n \geq 1$ and $L_1, \ldots, L_n$ are local ideals. If $L$ is a local ideal of $E$, then there is a $k$ with $L \subseteq L_k$, and furthermore, $L = L_k$ or $L \subseteq M_k$.
\end{proposition}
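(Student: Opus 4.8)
The plan is to split the statement into its two assertions and to observe that the second one is essentially free once the first is in hand. Indeed, suppose we already know $L \subseteq L_k$. Then $L$ is an ideal of $E$ contained in the sublattice $L_k$, hence $L$ is an ideal of $L_k$ (if $u \in L_k$ and $v \in L$ with $|u| \le |v|$, then $u \in L$ because $L$ is an ideal of $E$). Since $L_k$ is a local ideal, its maximal ideal $M_k$ contains every proper ideal of $L_k$; therefore either $L$ is not proper in $L_k$, so $L = L_k$, or $L$ is proper in $L_k$, so $L \subseteq M_k$. Thus everything rests on proving that $L \subseteq L_k$ for some $k$.

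For that first assertion, I would write $E$ as the order direct sum of the ideals $L_1, \dots, L_n$, so that the $L_i$ are mutually disjoint, $E_+ = \bigoplus_{i=1}^n (L_i)_+$, and every $x \in E$ decomposes as $x = \sum_{i=1}^n x_i$ with $x_i \in L_i$ and $|x| = \sum_{i=1}^n |x_i|$ (in particular $x \ge 0$ forces each $x_i \ge 0$ and $x_i \le x$). Both $L$ and each $L_i$ are local ideals of $E$, so \Cref{L:local_ideals_comparable_or_disjoint} applies to each pair $(L, L_i)$: they are comparable or disjoint. Assume, for a contradiction, that $L \not\subseteq L_i$ for every $i$; then for each $i$ we must have $L_i \subseteq L$ or $L$ disjoint from $L_i$. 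Put $S = \{\, i : L_i \subseteq L \,\}$. Take an arbitrary $x \in L_+$ and write $x = \sum_{i=1}^n x_i$ with $0 \le x_i \in L_i$. For $i \notin S$ the ideals $L$ and $L_i$ are disjoint, so $x \wedge x_i = 0$; but $0 \le x_i \le x$, hence $x_i = x_i \wedge x = 0$. Therefore $x = \sum_{i \in S} x_i \in \bigoplus_{i \in S} L_i$, and since $x \in L_+$ was arbitrary while $\bigoplus_{i \in S} L_i \subseteq L$ (each summand with index in $S$ lies in $L$), we conclude $L = \bigoplus_{i \in S} L_i$.

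It remains to rule out $L = \bigoplus_{i \in S} L_i$. A local ideal is nonzero (it has a strong unit), so $S \ne \emptyset$. And a local vector lattice cannot be written as a direct sum of two nonzero ideals: picking $i_0 \in S$ and splitting $L = L_{i_0} \oplus \bigl( \bigoplus_{i \in S \setminus \{i_0\}} L_i \bigr)$, if $|S| \ge 2$ both summands would be nonzero proper ideals of $L$, hence both contained in the maximal ideal of $L$, forcing that maximal ideal to equal $L$, a contradiction. Thus $|S| = 1$, say $S = \{k\}$, which gives $L = L_k \subseteq L_k$, contradicting our assumption. Hence $L \subseteq L_k$ for some $k$, and together with the first paragraph this proves the proposition.

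The step I expect to require the most care is the passage from ``$L$ is contained in no $L_i$'' to ``$L$ is the direct sum of the summands it contains'': one has to use that the coordinates of a positive element of $L$ are themselves positive and dominated by the element, so that disjointness from the off‑coordinate summands genuinely annihilates them. Everything else is bookkeeping plus the two structural facts about local vector lattices used above — that they are nonzero and that they admit no nontrivial direct sum decomposition into ideals.
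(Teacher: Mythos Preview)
Your proof is correct, but it is organized differently from the paper's.

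For the second assertion (that $L \subseteq L_k$ forces $L = L_k$ or $L \subseteq M_k$), you simply invoke the definition of a local vector lattice: every proper ideal of $L_k$ lies in $M_k$. The paper instead argues via the lexicographic description $L_k \cong \R \circ M_k$, picking $x \in L \setminus M_k$, writing $x = \lambda e_k + y$ with $\lambda \neq 0$, and observing that $|x|$ is then a strong unit of $L_k$, so $L = L_k$. Your route is shorter and uses less structure.

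For the first assertion, both arguments start from \Cref{L:local_ideals_comparable_or_disjoint}, but diverge afterwards. You assume $L \not\subseteq L_i$ for every $i$, use the coordinate decomposition of a positive element of $L$ together with disjointness to show $L = \bigoplus_{i \in S} L_i$, and then derive a contradiction from the fact that a local vector lattice admits no nontrivial order direct sum decomposition (both summands would be proper, hence contained in the maximal ideal, forcing $L \subseteq M_L$). The paper instead picks a single $L_k$ comparable with $L$, supposes $L_k \subsetneq L$, finds a second $L_j \subseteq L$, and then exhibits two distinct codimension-one ideals $L \cap [(\bigoplus_{i \neq k} L_i) \oplus M_k]$ and $L \cap [(\bigoplus_{i \neq j} L_i) \oplus M_j]$ of $L$, contradicting locality. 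Your argument trades the explicit construction of two maximal ideals for the cleaner abstract fact about indecomposability; the paper's version is perhaps more hands-on but requires verifying that the two displayed ideals really are distinct and of codimension one in $L$.
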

 \begin{proof}
 If $L$ is disjoint from all $L_k$'s, then $L = \{0\}$ which is a contradiction. Therefore there is a $k$ such that $L$ is not disjoint from $L_k$, and so $L$ is comparable with $L_k$ by Lemma~\ref{L:local_ideals_comparable_or_disjoint}. If $L \subseteq L_k$, then if $L \not\subseteq M_k$, there is an $x \in L$ with $x = \lambda e_k +y$ with $\lambda \not= 0$ and $y \in M_k$. By passing to $|x|$ if necessary, we may assume that $\lambda > 0$. Hence $L$ contains an order unit of $L_k$ and so $L = L_k$. It remains to show that $L_k \subsetneq L$ is impossible.
 
 Suppose $L_k \subsetneq L$. This implies that $L$ is not disjoint with some $L_j$ for some $j \not= k$. By Lemma~\ref{L:local_ideals_comparable_or_disjoint}, either $L \subseteq L_j$ or $L_j \subseteq L$; the first is impossible since $L$ contains $L_k$. Hence $L_j \subseteq L$. Now $L \cap [(\bigoplus_{i \not= k} L_i) \oplus M_k]$ and $L \cap[(\bigoplus_{i \not= j} L_i) \oplus M_j]$ are two different codimension one ideals in $L$, so they are maximal. This contradicts the fact that $L$ is local.
 \end{proof}

\subsection{Well-founded posets}
This section discusses well-founded posets, which are an essential ingredient for characterizing Artinian and Noetherian vector latices.

Let $X$ be a poset. For $x \in X$, we define $\da x := \{y \in X \colon y \leq x\}$ and $\sda x := \{y \in X \colon y < x\}$, and for $A \subseteq X$ we define $\da A := \cup_{x \in A} \, \da x$. The notations $\ua x$, $\sua x$, and $\ua A$ are defined similarly, and $U \subseteq X$ is defined to be an \emph{upper set} if $\ua U = U$. For $A \subseteq X$, we denote by $\min(A) := \{x \in A \colon \sda x \subseteq A^c\}$ the set of minimal elements of $A$. A subset $A \subseteq X$ is an \emph{antichain} if no elements of $A$ are comparable.

We will next discuss well-founded induction and recursion. This topic is treated in full generality for classes in any introductory text on set theory (see for example \cite[Section~III.5]{Kunen}), but we only need the results for sets, so for the convenience of the reader we give an elementary explanation involving sets only.

\begin{definition}
Let $X$ be a poset (or a subset of a poset). Then $X$ is called \emph{well-founded} if every nonempty subset of $X$ has a minimal element.
\end{definition}

\subsubsection{Well-founded induction}

Transfinite induction readily generalizes to well-founded posets.

\begin{lemma}\label{L:well-founded equivalences}
Let $X$ be a poset. Then the following statements are equivalent.
\begin{enumerate}
    \item $X$ is well-founded.
    \item For every nonempty subset $A$ of $X$, every element of $A$ dominates a minimal element of $A$.
    \item Every decreasing sequence in $X$ is stationary.
    \item $X$ has no strictly decreasing sequences.
    \item The principle of well-founded induction is valid for $X$: if $A \subseteq X$ is such that for all $x \in X$, the property that all $y < x$ are in $A$ implies that $x \in A$, then $A=X$.
\end{enumerate}
\end{lemma}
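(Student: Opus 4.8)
The plan is to prove the five statements equivalent by establishing a cycle of implications, with the well-founded induction principle handled separately at the end since it is most naturally shown equivalent to one of the others via contraposition. Concretely I would prove $(1) \Rightarrow (2) \Rightarrow (3) \Rightarrow (4) \Rightarrow (1)$ and then $(1) \Leftrightarrow (5)$.

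For $(1) \Rightarrow (2)$: given a nonempty $A \subseteq X$ and $x \in A$, apply well-foundedness to the nonempty set $\da x \cap A$ to obtain a minimal element $m$ of this set; one checks $m$ is in fact minimal in $A$ (anything in $A$ strictly below $m$ would lie in $\da x \cap A$, contradicting minimality there), and $m \le x$. For $(2) \Rightarrow (3)$: let $(x_n)$ be a decreasing sequence and apply $(2)$ to the set $A = \{x_n : n \in \N\}$; every element dominates a minimal element $x_N$ of $A$, and minimality forces $x_n = x_N$ for all $n \ge N$ since the sequence is decreasing and $x_n \le x_N$ would otherwise be a strictly smaller element of $A$. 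Wait — I need to be careful: minimality of $x_N$ in $A$ means no element of $A$ is strictly below $x_N$; since $x_n \le x_N$ for $n \ge N$ and $x_n \in A$, we must have $x_n = x_N$. For $(3) \Rightarrow (4)$: immediate, since a strictly decreasing sequence is in particular a non-stationary decreasing sequence. For $(4) \Rightarrow (1)$: contrapositive — if some nonempty $A \subseteq X$ has no minimal element, then recursively pick $x_{n+1} < x_n$ (possible at each step precisely because $x_n$ is not minimal in $A$), producing a strictly decreasing sequence; this step uses the Axiom of (Dependent) Choice, which is consistent with the paper's stated reliance on choice.

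For $(1) \Leftrightarrow (5)$, I would argue the contrapositive in both directions. If $X$ is not well-founded, take a nonempty $B \subseteq X$ with no minimal element and set $A := X \setminus B$; then for any $x$, if all $y < x$ lie in $A$ then $\sda x \cap B = \emptyset$, so $x$ cannot be in $B$ (else $x$ would be minimal in $B$), hence $x \in A$ — so the hypothesis of $(5)$ holds but $A \ne X$, violating $(5)$. Conversely, if $X$ is well-founded and $A \subseteq X$ satisfies the inductive hypothesis of $(5)$ but $A \ne X$, then $B := X \setminus A$ is nonempty and has a minimal element $m$; minimality means every $y < m$ is in $A$, so by hypothesis $m \in A$, a contradiction.

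The only genuinely delicate point is the use of Dependent Choice in $(4) \Rightarrow (1)$ to extract a strictly decreasing sequence from a set without a minimal element; everything else is routine bookkeeping about minimal elements and the direction of inequalities. Since the paper already commits to the Axiom of Choice (as noted in the discussion of local ideals), invoking Dependent Choice here is unproblematic, and I would simply remark on this rather than belabor it.
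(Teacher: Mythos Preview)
Your proof is correct and follows essentially the same approach as the paper: the arguments for $(1)\Rightarrow(2)$, $(4)\Rightarrow(1)$, and $(1)\Rightarrow(5)$ are identical to the paper's, and your cycle $(1)\Rightarrow(2)\Rightarrow(3)\Rightarrow(4)\Rightarrow(1)$ differs only cosmetically from the paper's implication graph. The one minor structural difference is that the paper closes the equivalence with $(5)$ via $(v)\Rightarrow(iv)$ (taking $A$ to be the complement of a strictly decreasing sequence), whereas you prove $\neg(1)\Rightarrow\neg(5)$ directly (taking $A$ to be the complement of a set without minimal element); your route is arguably cleaner since it avoids invoking Dependent Choice in that particular implication, but the content is the same.
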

\begin{proof}
$(i) \Rightarrow (ii)$: Let $\emptyset \not= A \subseteq X$ and let $x \in A$. Then $\da x \cap A$ is nonempty and so has a minimal element, which is also minimal in $A$. 

$(ii) \Rightarrow (i)$: Let $\emptyset \not= A \subseteq X$ and let $x \in A$, then $x$ dominates a minimal element of $A$.

$(i) \Rightarrow (iii)$: Let $(x_n)_{n\in\N}$ be a decreasing sequence, then $\{x_n \colon n \in \N\}$ has a minimal element, forcing the sequence to be stationary from that point onward.

$(iii) \Leftrightarrow (iv)$: This is clear.

$(iv) \Rightarrow (i)$: We prove the contrapositive. Suppose $X$ is not well-founded, then there exists an $\emptyset \not= A \subseteq X$ without a minimal element. Let $x_1 \in A$, since it is not minimal there exists $x_2 \in A \cap \sda x_1$. Continuing this procedure generates a strictly decreasing sequence $(x_n)_{n\in\N}$.

$(i) \Rightarrow (v)$:  Suppose that $A \subseteq X$ is such that for all $x \in X$, the property that all $y < x$ are in $A$ implies that $x \in A$. Assume that $A \not= X$, then by $(i)$, let $x$ be a minimal element of the nonempty set $A^c$. Then $y \in A$ for all $y < x$ but $x \notin A$ which is a contradiction, hence $A = X$.

$(v) \Rightarrow (iv)$: We prove the contrapositive, so suppose there exists a strictly decreasing sequence $(x_n)_{n\in\N}$ in $X$; let $A = \{x_n \colon n \in \N\}^c$. Let $x \in X$ and assume that $y \in A$ for all $y < x$. If $x \notin A$ then $x = x_n$ for some $n$, but $x_{n+1} < x_n$ and $x_{n+1} \notin A$ contradicting the assumption, so $x \in A$. Hence $A$ satisfies the induction premise but $A \not= X$, therefore $(v)$ fails.
\end{proof}

The next lemma states that the well-founded subsets of a poset $X$ form an ideal in $2^X$, the Boolean algebra of the power set of $X$.

\begin{lemma}\label{L:well-founded closed under unions}
Let $X$ be a poset. Then any subset of a well-founded set in $X$ and any finite union of well-founded sets in $X$ is well-founded. 
\end{lemma}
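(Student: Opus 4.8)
The plan is to handle the two assertions separately: the first follows directly from the definition, while for the second I would invoke the characterization of well-foundedness established in \Cref{L:well-founded equivalences}.

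For the first assertion, suppose $A \subseteq B$ with $B$ well-founded, and let $S$ be a nonempty subset of $A$. Then $S$ is also a nonempty subset of $B$, hence has a minimal element; since whether an element of $S$ is minimal depends only on $S$ and the order relation, this element is minimal in $S$ viewed as a subset of $A$ as well. Thus every nonempty subset of $A$ has a minimal element, so $A$ is well-founded. This step is essentially immediate.

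For the second assertion, by an obvious induction on the number of sets it suffices to prove that the union $B_1 \cup B_2$ of two well-founded subsets of $X$ is well-founded. Here I would use the equivalence of well-foundedness with the absence of strictly decreasing sequences (items $(i)$ and $(iv)$ of \Cref{L:well-founded equivalences}, applied to $B_1$, $B_2$, and $B_1 \cup B_2$, each regarded as a poset under the order induced from $X$). Suppose for contradiction that $(x_n)_{n \in \N}$ is a strictly decreasing sequence in $B_1 \cup B_2$. At least one of $\{n \colon x_n \in B_1\}$ and $\{n \colon x_n \in B_2\}$ is infinite; the corresponding subsequence is then a strictly decreasing sequence lying entirely in $B_1$ or entirely in $B_2$, contradicting the well-foundedness of that set. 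Hence $B_1 \cup B_2$ has no strictly decreasing sequence and is therefore well-founded.

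Alternatively, one can argue directly with minimal elements: given nonempty $S \subseteq B_1 \cup B_2$ with $S \cap B_1 \neq \emptyset$, choose a minimal element $m$ of $S \cap B_1$; then $\{y \in S \colon y < m\}$ is contained in $B_2$, and if it is nonempty one checks at once that any minimal element of it (which exists since $B_2$ is well-founded) is also minimal in $S$. Neither argument involves a real obstacle; the only point worth stating explicitly is that the equivalences of \Cref{L:well-founded equivalences} are being applied to subsets of $X$ rather than to $X$ itself, which is legitimate because each such subset is again a poset with the restricted order.
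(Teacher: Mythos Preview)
Your proposal is correct and follows essentially the same approach as the paper: the subset claim is immediate from the definition, and the union claim is proved (in both your argument and the paper's) by the contrapositive via \Cref{L:well-founded equivalences}, passing from a strictly decreasing sequence in the union to one in a single summand. Your write-up is simply more detailed, and your alternative minimal-element argument is a valid extra route the paper does not include.
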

\begin{proof}
It should be obvious that any subsets of a well-founded set $X$ is well-founded. Let $A, B \subseteq X$ and suppose $A \cup B$ is not well-founded, then it has a strictly decreasing sequence by \Cref{L:well-founded equivalences}. Then either $A$ or $B$ has a strictly decreasing sequence, so either $A$ or $B$ is not well-founded. 
\end{proof}

\subsubsection{Well-founded recursion}

Transfinite recursion similarly generalizes to well-founded recursion, with a proof that is essentially not more complicated than the proof of transfinite recursion. We denote by $D(f)$ the domain of a function $f$.

\begin{theorem}\label{T:well_founded_recursion}
Let $X$ be a well-founded poset, $S$ a set, and let $G$ be a function with codomain $S$ and domain the set of pairs $(x,g)$ with $x \in X$ and $g \colon \sda x \to S$. Then there exists a unique function $F \colon X \to S$ with $F(x) = G(x, F|_{\sda x})$ for all $x \in X$.
\end{theorem}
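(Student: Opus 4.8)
The plan is to transcribe the classical proof of transfinite recursion into the well-founded setting. First I would introduce the notion of an \emph{attempt}: a function $f$ whose domain $D(f)$ is a lower set in $X$ (i.e.\ $\da D(f) = D(f)$), whose range satisfies $\ran f \subseteq S$, and which obeys $f(x) = G(x, f|_{\sda x})$ for every $x \in D(f)$. This last equation is well-typed precisely because $D(f)$ is a lower set, so $\sda x \subseteq D(f)$ and $f|_{\sda x} \colon \sda x \to S$ is a legitimate argument for $G$. I would also note at the outset that the collection of all attempts is a \emph{set}, since every attempt is a subset of $X \times S$, so that forming a union over this collection is legitimate; this is the only place where the restriction to sets (rather than classes) matters.

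The first key step is the agreement lemma: if $f$ and $f'$ are attempts, then $f(x) = f'(x)$ for every $x \in D(f) \cap D(f')$. I would prove this by well-founded induction (\Cref{L:well-founded equivalences}$(v)$) applied to the set $A$ of all $x \in X$ such that either $x \notin D(f) \cap D(f')$ or $f(x) = f'(x)$. Indeed, if all $y < x$ lie in $A$ and $x \in D(f) \cap D(f')$, then, both domains being lower sets, $\sda x \subseteq D(f) \cap D(f')$, so the inductive hypothesis gives $f|_{\sda x} = f'|_{\sda x}$, whence $f(x) = G(x, f|_{\sda x}) = G(x, f'|_{\sda x}) = f'(x)$; thus $x \in A$. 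This is the one step where well-foundedness is genuinely used.

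Next I would set $F := \bigcup \{ f \colon f \text{ is an attempt}\}$. By the agreement lemma this relation is single-valued, hence a function; its domain $D(F)$ is a union of lower sets and therefore a lower set, and $\ran F \subseteq S$. One checks immediately that $F$ is itself an attempt: for $x \in D(F)$ there is an attempt $f$ with $x \in D(f)$, and since $F$ extends $f$ while $\sda x \subseteq D(f)$, we get $F(x) = f(x) = G(x, f|_{\sda x}) = G(x, F|_{\sda x})$. It then remains to show $D(F) = X$, again by well-founded induction: fix $x$ and assume $\sda x \subseteq D(F)$; then $g := F|_{\sda x}$ is an attempt (its domain $\sda x$ is a lower set), and $f := g \cup \{(x, G(x,g))\}$ is an attempt too — its domain $\sda x \cup \{x\}$ is a lower set, and the recursion equation holds on $\sda x$ since $f$ agrees there with the attempt $g$, and at $x$ since $f|_{\sda x} = g$ forces $f(x) = G(x,g) = G(x, f|_{\sda x})$. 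Hence $f \subseteq F$, so $x \in D(F)$, and \Cref{L:well-founded equivalences}$(v)$ yields $D(F) = X$. Thus $F \colon X \to S$ satisfies $F(x) = G(x, F|_{\sda x})$ for all $x \in X$.

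Finally, uniqueness is immediate: any function $F'$ with the stated property is an attempt with domain $X$, so the agreement lemma forces $F' = F$ (alternatively, a direct well-founded induction shows that $F|_{\sda x} = F'|_{\sda x}$ implies $F(x) = G(x, F|_{\sda x}) = G(x, F'|_{\sda x}) = F'(x)$). I do not expect a genuine obstacle here; the proof is routine, and the only points demanding care are insisting that attempts have \emph{lower} domains (so that restrictions to $\sda x$ are again attempts and $G$ is applied to well-typed arguments), the agreement lemma, and the observation that the attempts form a set so that their union exists.
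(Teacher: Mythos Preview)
Your proof is correct and follows essentially the same approach as the paper: define partial solutions satisfying the recursion, prove an agreement lemma by well-founded induction, take their union, and show the union is total (again by well-founded induction). The only cosmetic difference is that you take the domain of an attempt to be an arbitrary lower set, whereas the paper restricts to domains of the form $\sda x$; your choice is slightly cleaner, since the extension step produces a function with domain $\da x$, which is automatically a lower set but need not equal $\sda z$ for any $z$.
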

\begin{proof}
Let $K$ be the set of functions $f$ with codomain $S$ such that $D(f) = \sda x$ for some $x \in X$ and $f(y) = G(y, f|_{\sda y})$ for all $y \in D(f)$. If $f,g \in K$, we claim that $f(x) = g(x)$ for all $x \in D(f) \cap D(g)$. To prove the claim, let $x \in D(f) \cap D(g)$ and suppose $f(y) = g(y)$ for all $y < x$. Then $f(x) = G(x, f|_{\sda x}) = G(x, g|_{\sda x}) = g(x)$. By well-founded induction on $D(f) \cap D(g)$, the claim follows. We can therefore define $F(x)$ to be $f(x)$ if there exists an $f \in K$ with $x \in D(f)$, i.e., $F = \cup K$.

We next show that $F(x) = G(x, F|_{\sda x})$ for all $x \in D(F)$. Clearly $F$ extends all $f \in K$. Let $x \in D(F)$, and let $f \in K$ be such that $x \in D(f)$. Then $F(x) = f(x) = G(x, f|_{\sda x}) = G(x, F|_{\sda x})$. 

To prove that $D(F) = X$, we use well-founded induction. So let $x \in X$ and suppose $y \in D(F)$ for all $y < x$. Define $g(y) := F(y)$ for all $y < x$, so $F$ extends $g$, and extend $g$ to $f$ by defining $f(x) := G(x,g)$. We claim that $f \in K$. Indeed, $f(x) = G(x,g) = G(x, f|_{\sda x})$, and if $y < x$, then 
$$
f(y) = g(y) = F(y) = G(y, F|_{\sda y}) = G(y, g|_{\sda y}) = G(y, f|_{\sda y}).
$$
Hence $x \in D(f) \subseteq D(F)$. By well-founded induction, $D(F) = X$.

For the uniqueness, let $H$ be a function with $D(H) = X$ and $H(x) = G(x, H|_{\sda x})$ for all $x \in X$. Let $A = \{x \in X \colon F(x) = H(x)\}$. We will show that $A = X$ by well-founded induction. Suppose $x \in X$ and $y \in A$ (i.e., $F(y) = H(y)$) for all $y < x$. Then $F(x) = G(x, F|_{\sda x}) = G(x, H|_{\sda x}) = H(x)$, so $x \in A$. By well-founded induction, $A = X$.
\end{proof}

\Cref{T:well_founded_recursion} is typically applied in the following situation: suppose that for each $x \in X$ and $g \colon \sda x \to S$ there is a construction for extending $g$ to a function $\tilde{g} \colon \da x \to S$ (in \Cref{T:well_founded_recursion} this is represented by the function $G$, in the sense that $G(x,g)$ should be thought of as defining the value of $\tilde{g}(x)$). This implies in particular that we can define a function $f$ on all minimal elements of $X$, and then \Cref{T:well_founded_recursion} states that there exists a unique function $F$ extending $f$ to the whole of $X$ that is compatible with the construction $G$.

\section{Artinian and Noetherian vector lattices}

In this section we study Artinian and Noetherian vector lattices. A vector lattice $E$ is said to be \emph{Artinian} if every decreasing sequence of ideals $(I_n)_{n\in\N}$ is stationary, that is, there is an index $k$ such that $I_n = I_k$ for all $n \ge k$. Similarly, a vector lattice $E$ is called \emph{Noetherian} if every increasing sequence of ideals $(I_n)_{n\in\N}$ is stationary. The following proposition is an improvement of \cite[Proposition 5.1]{Kandic-Roelands} stating that an Archimedean vector lattice is finite-dimensional if and only if it is Noetherian.

\begin{proposition}\label{Archimiedean Art=Noeth}
For an Archimedean vector lattice $E$ the following statements are equivalent. 
\begin{enumerate}
    \item $E$ is finite-dimensional.
    \item $E$ is Noetherian.
    \item $E$ is Artinian.
\end{enumerate}
\end{proposition}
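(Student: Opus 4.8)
I would establish all three equivalences in a single pass, noting that $(i)\Leftrightarrow(ii)$ is already \cite[Proposition~5.1]{Kandic-Roelands}. The implications $(i)\Rightarrow(ii)$ and $(i)\Rightarrow(iii)$ are immediate: ideals are linear subspaces, so in a finite-dimensional $E$ any strictly monotone chain of ideals has at most $\dim E+1$ terms, whence every monotone sequence of ideals is stationary. The real task is $(ii)\Rightarrow(i)$ and $(iii)\Rightarrow(i)$, and the plan is to handle both at once by proving the contrapositive in the form: \emph{if $E$ is Archimedean and infinite-dimensional, then $E$ contains an infinite sequence $(e_n)_{n\ge1}$ of nonzero, pairwise disjoint, positive elements}. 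Such a sequence breaks both the ascending and the descending chain condition simultaneously.

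Granting that, the remainder is short. For $S\subseteq E_+$ the generated ideal is $I_S=\{x\in E:|x|\le\sum_{k=1}^{r}\lambda_k s_k\text{ for some }s_k\in S,\ \lambda_k\ge0\}$. Put $I_n:=I_{\{e_k:k\ge n\}}$ and $J_n:=I_{\{e_1,\dots,e_n\}}$; then $(I_n)_n$ decreases, $(J_n)_n$ increases, and both are \emph{strictly} monotone: if $e_n\in I_{n+1}$ then $e_n\le\sum_k\lambda_k e_{j_k}$ with every $j_k\ge n+1$, so $e_n$ is disjoint from each $e_{j_k}$ and hence from their sum, forcing $e_n=e_n\wedge\sum_k\lambda_k e_{j_k}=0$; symmetrically $e_{n+1}\notin J_n$. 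Thus $E$ is neither Artinian nor Noetherian.

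It remains to produce the disjoint sequence, and here is where the work lies. Suppose, for contradiction, that $E$ has no infinite disjoint family, and fix by Zorn's lemma a maximal family $\{e_1,\dots,e_w\}$ of nonzero, pairwise disjoint, positive elements; it is finite, as it would otherwise be such a family. For each $i$, the band $B_i$ generated by $e_i$ has no pair of nonzero disjoint elements — a disjoint pair $f_1,f_2\in B_i$, together with the $e_j$ for $j\ne i$ (which lie in the disjoint complement of $B_i$), would be a disjoint family of size $w+1$. So $B_i$ is totally ordered, hence, being a nonzero Archimedean totally ordered vector lattice, one-dimensional by Hölder's theorem: $B_i=\R a_i$ with $a_i>0$ an atom of $E$. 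In an Archimedean vector lattice each atom generates a one-dimensional projection band — the projection onto $\R a_i$ sends $x\in E_+$ to $\sup_n(x\wedge na_i)$, which exists because that sequence is increasing and, by the Archimedean property, order bounded — and a finite sum of pairwise disjoint projection bands is a projection band. Thus $B:=\R a_1\oplus\cdots\oplus\R a_w$ is a projection band and $E=B\oplus B^d$. But $B^d=\{a_1,\dots,a_w\}^d=\{e_1,\dots,e_w\}^d=\{0\}$ by maximality, so $E=B\cong\R^w$ is finite-dimensional, a contradiction.

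The calculations in the middle step and the elementary disjointness manipulations are routine; the heart of the matter is the structural claim that infinite dimension forces an infinite disjoint family. There the Archimedean hypothesis is genuinely used — via Hölder's theorem, and above all via the fact that atoms generate projection bands — and it cannot be dropped, since the lexicographic vector lattices $\Lex(X)$ provide infinite-dimensional non-Archimedean spaces that are Artinian. The point to treat with care is that $B^d$ is a solid subspace of $E$, so an atom of $B^d$ is automatically an atom of $E$; this is precisely what lets the maximality of $\{e_1,\dots,e_w\}$ close the argument. (Hölder's theorem for vector lattices — an Archimedean totally ordered vector lattice embeds in $\R$ — can be quoted from the standard references, or proved directly via $\sup\{t\in\R:ty\le x\}$.)
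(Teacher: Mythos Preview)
Your overall strategy --- produce an infinite pairwise disjoint sequence in any infinite-dimensional Archimedean $E$ and then build strictly monotone chains of ideals from it --- is exactly what the paper does for $(iii)\Rightarrow(i)$; the paper simply quotes \cite[Theorem~26.10]{Zaanen} for the disjoint sequence and then forms the same decreasing ideals $J_m=\sum_{n\ge m}I_{x_n}$ that you do. Your chain constructions and the verification that they are strictly monotone are correct, as are the later steps involving H\"older's theorem and projection bands generated by atoms.

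The gap is in your direct argument for the disjoint sequence. From an \emph{inclusion}-maximal disjoint family $\{e_1,\dots,e_w\}$ you conclude that each $B_i=\{e_i\}^{dd}$ contains no disjoint pair, on the grounds that such a pair together with the remaining $e_j$ would give a disjoint family of size $w+1$. But a disjoint family of size $w+1$ does not contradict inclusion-maximality of a family of size $w$; you are tacitly assuming that $w$ is the \emph{maximum} cardinality of any disjoint family, and the existence of such a maximum is precisely the nontrivial content you need. The intermediate claim is in fact false even under your hypothesis: in $\R^3$ the family $\{(1,1,0),(0,0,1)\}$ is inclusion-maximal of size $2$, yet $B_1=\R^2\times\{0\}$ contains disjoint pairs. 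A correct repair is to first show that, under the assumption of no infinite disjoint family, every $0<f$ dominates an atom --- otherwise one may repeatedly split to produce an infinite disjoint sequence $f_1,f_{21},f_{221},\ldots$ --- and then take the (necessarily finite) set of atoms, up to scalar, as your $e_i$; after that your projection-band argument goes through unchanged. Alternatively, cite \cite[Theorem~26.10]{Zaanen} as the paper does.
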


\begin{proof}
The equivalence between $(i)$ and $(ii)$ is \cite[Proposition 5.1]{Kandic-Roelands}. If $E$ is finite-dimensional, it is Artinian as it has only finitely many ideals. 

To prove that $(iii)$ implies $(i)$, we argue by contradiction. If $E$ is infinite-dimensional, then by \cite[Theorem 26.10]{Zaanen} there is a sequence of pairwise disjoint nonzero vectors $(x_n)_{n\in \mathbb N}$ in $E$. For each $n\in \mathbb N$ let $I_n$ be the principal ideal generated by $x_n$. For each $m\in \mathbb N$ we define the ideal $J_m:=\sum_{n=m}^\infty I_n$. Since $E$ is Artinian, the decreasing sequence of ideals $(J_m)_{m\in \mathbb N}$ is stationary. Hence, there exists $m_0\in \mathbb N$ such that $J_{m_0}=J_{m_0+1}$ which implies $x_{m_0}\in \sum_{n=m_0+1}^\infty I_n$. However, this is impossible as $x_{m_0}$ is disjoint with $x_n$ for each $n\neq m_0$.  
\end{proof}

By the characterization of Artinian vector lattices and Noetherian vector lattices (see Section 5 and Section 6) it follows that there exist Noetherian vector lattices that are not Artinian and Artinian vector lattices that are not Noetherian. By \Cref{Archimiedean Art=Noeth}, it should be clear that such examples are not Archimedean.

The following result provides a useful characterization of Noetherian vector lattices. 

\begin{proposition}\label{Noetherian characterization}
For a vector lattice $E$ the following statements are equivalent. 
\begin{enumerate}
    \item $E$ is Noetherian. 
    \item Every nonempty family of ideals has a maximal element.     
    \item Every ideal in $E$ is principal. 
\end{enumerate}
\end{proposition}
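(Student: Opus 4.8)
The plan is to prove the cycle of implications $(i) \Rightarrow (ii) \Rightarrow (iii) \Rightarrow (i)$, which is the standard template from commutative algebra adapted to the lattice setting.

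For $(i) \Rightarrow (ii)$, I would argue by contraposition: suppose $\mathcal{S}$ is a nonempty family of ideals with no maximal element. Pick $I_1 \in \mathcal{S}$; since it is not maximal in $\mathcal{S}$ there is $I_2 \in \mathcal{S}$ with $I_1 \subsetneq I_2$; iterating produces a strictly increasing sequence of ideals, contradicting that $E$ is Noetherian. This step needs (a countable form of) dependent choice, but that is entirely in keeping with the rest of the paper. For $(ii) \Rightarrow (iii)$, let $I$ be any ideal of $E$ and consider the family $\mathcal{F} := \{ I_S : S \subseteq I \text{ finite} \}$ of finitely generated subideals of $I$, which is nonempty since $\{0\} \in \mathcal{F}$. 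By $(ii)$ it has a maximal element $I_{S_0}$ with $S_0 = \{x_1,\dots,x_k\}$. If $I_{S_0} \neq I$, pick $x \in I \setminus I_{S_0}$; then $I_{S_0 \cup \{x\}}$ strictly contains $I_{S_0}$ and still lies in $\mathcal{F}$, contradicting maximality. Hence $I = I_{S_0}$ is finitely generated. To upgrade ``finitely generated'' to ``principal'' I would use that the ideal generated by $x_1,\dots,x_k$ equals $I_{|x_1| + \cdots + |x_k|}$: indeed each $|x_i| \le |x_1|+\cdots+|x_k|$, so $x_i$ lies in the principal ideal generated by the sum, and conversely the sum lies in $I_{S_0}$; thus the two ideals coincide and $I$ is principal.

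For $(iii) \Rightarrow (i)$, suppose every ideal of $E$ is principal and let $(I_n)_{n\in\N}$ be an increasing sequence of ideals. The union $I := \bigcup_n I_n$ is again an ideal (increasing unions of ideals are ideals: it is clearly a subspace by the directedness of the chain, and solidity is inherited). By $(iii)$, $I = I_x$ for some $x \in E_+$. Then $x \in I_n$ for some $n$, whence $I_x \subseteq I_n \subseteq I = I_x$, so $I_m = I_n$ for all $m \ge n$ and the sequence is stationary.

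I expect the only genuinely non-routine point to be the passage from a finite generating set to a single generator in $(ii) \Rightarrow (iii)$, i.e.\ recognizing that $I_{\{x_1,\dots,x_k\}} = I_{|x_1|+\cdots+|x_k|}$; everything else is a direct transcription of the Artinian/Noetherian machinery for rings, with ``solid subspace'' in place of ``submodule''. A minor point worth stating carefully is that an increasing union of ideals is an ideal, which is where the chain condition (as opposed to an arbitrary family) is used.
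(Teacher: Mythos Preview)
Your proof is correct and follows essentially the same cycle $(i)\Rightarrow(ii)\Rightarrow(iii)\Rightarrow(i)$ as the paper, with identical arguments for $(i)\Rightarrow(ii)$ and $(iii)\Rightarrow(i)$. The only cosmetic difference is in $(ii)\Rightarrow(iii)$: the paper applies $(ii)$ directly to the family of \emph{principal} subideals of $I$ (using that $I_0 + I_x$ is again principal to derive the contradiction), whereas you apply it to the family of finitely generated subideals and then invoke $I_{\{x_1,\dots,x_k\}} = I_{|x_1|+\cdots+|x_k|}$ at the end; both routes rest on the same observation that a finite sum of principal ideals in a vector lattice is principal.
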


\begin{proof}
Suppose $(i)$ holds and that there is a nonempty family $\mathcal F$ of ideals of $E$ that does not contain a maximal element. Pick any ideal $I_1\in \mathcal F$. Since $I_1$ is not a maximal element of $\mathcal F$ there exists an ideal $I_2\in \mathcal F$ such that $I_1\subsetneq I_2$. Since $I_2$ is not a maximal element of $\mathcal F$, there exists an ideal $I_3\in \mathcal F$ such that $I_2\subsetneq I_3$. Inductively, we can build an increasing sequence of ideals in $E$, which is not stationary. 

To prove that $(ii)$ implies $(iii)$, let $I$ be an ideal in $E$. Let $\mathcal F$ be the family of all principal ideals contained in $I$, which is nonempty since $\{0\} \in \F$. By $(ii)$, this family has a maximal element $I_0$. If $I_0\neq I$, then there exists a positive nonzero vector $x\in I\setminus I_0$. Now, the ideal $I_0+I_x$ in $I$ is principal and properly contains $I_0$ which is impossible due to the maximality of $I_0$ in $\mathcal F$. Hence, $I_0=I$ is a principal ideal. 

Suppose $(iii)$ holds and consider an increasing sequence of ideals $(I_n)_{n\in\mathbb N}$ in $E$. Then its union $I=\bigcup_{n=1}^\infty I_n$ is a principal ideal in $E$. If $I=I_x$ for some positive vector $x\in E$, then $x\in I_n$ for some $n\in \mathbb N$ implying $I=I_x=I_{n}=I_{n+1}=\cdots$.
\end{proof}

\begin{corollary}\label{Noetherian lattices: maximal ideals}
A nonzero Noetherian vector lattice $E$ has a strong unit and maximal ideals.  
\end{corollary}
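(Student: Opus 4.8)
The plan is to deduce both claims directly from the characterization of Noetherian vector lattices in \Cref{Noetherian characterization}, in particular from the equivalence with statement $(iii)$, that every ideal in $E$ is principal. Since $E$ itself is an ideal of $E$, applying $(iii)$ to $I = E$ yields a positive vector $x \in E_+$ with $E = I_x$; by definition this $x$ is a strong unit, which settles the first claim. (One should note $x \neq 0$ here, which holds because $E \neq \{0\}$ by hypothesis.)

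For the existence of maximal ideals, the idea is to invoke the existence of a strong unit together with the standard fact that a strong unit forces the existence of a maximal ideal, namely \cite[Theorem~27.4]{Zaanen}, which is cited in the introduction for exactly this purpose. Concretely, since $x$ is a strong unit, $E$ is nonzero and a Zorn's lemma argument on the family of proper ideals — ordered by inclusion, with the union of a chain of proper ideals again being a proper ideal precisely because no proper ideal can contain the strong unit $x$ — produces a maximal element, which is a maximal ideal of $E$. Alternatively, and perhaps more in the spirit of this section, one can argue from \Cref{Noetherian characterization}$(ii)$: the family of all \emph{proper} ideals of $E$ is nonempty (it contains $\{0\}$, and it is proper since $E \neq \{0\}$... wait, $\{0\} \subsetneq E$), so by $(ii)$ it has a maximal element $M$; such an $M$ is then a maximal ideal of $E$ by definition.

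The main thing to be careful about is the boundary case and the phrasing of ``maximal element'' in \Cref{Noetherian characterization}$(ii)$: applying $(ii)$ to the family of \emph{all} ideals would just return $E$ itself, which is not a maximal ideal, so one must restrict to the family of proper ideals and check that this family is nonempty — which is exactly where the hypothesis $E \neq \{0\}$ enters, since then $\{0\}$ is a proper ideal. There is no serious obstacle here; the corollary is essentially an immediate packaging of \Cref{Noetherian characterization} with the definition of a strong unit and the elementary Zorn-type argument for maximal ideals in the presence of a strong unit. I would write the proof in two short sentences, one for each assertion.
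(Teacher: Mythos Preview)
Your proposal is correct and your primary route---applying \Cref{Noetherian characterization}$(iii)$ to $I=E$ to obtain a strong unit, then invoking \cite[Theorem~27.4]{Zaanen} for the existence of maximal ideals---is exactly the paper's two-line proof. Your alternative via \Cref{Noetherian characterization}$(ii)$ applied to the nonempty family of proper ideals is also valid and has the minor advantage of being self-contained (no external Zorn argument needed), though the paper does not take that route.
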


\begin{proof}
By \Cref{Noetherian characterization}, $E$ has a strong unit. Therefore, $E$ has maximal ideals by \cite[Theorem~27.4]{Zaanen}.
\end{proof}

An ideal $I$ of a vector lattice $E$ is \emph{minimal} if the zero ideal is the only ideal properly contained in $I$. By \cite[Proposition~II.3.4]{Schaefer}, every minimal ideal is isomorphic to $\mathbb R$. Since every two trivially intersecting ideals are disjoint, it follows that minimal ideals $I$ and $J$ are either equal or disjoint.
The complementary result to \Cref{Noetherian lattices: maximal ideals} in Artinian vector lattices concerns these minimal ideals. This is an immediate consequence of \Cref{L:well-founded equivalences}. 

\begin{proposition}\label{P:artinian_finite_minimal_ideals}
In an Artinian vector lattice $E$ every nonzero ideal contains a minimal ideal. Furthermore, the collection of minimal ideals of $E$ is finite. 
\end{proposition}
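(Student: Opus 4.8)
The plan is to prove both assertions using the Artinian property and the well-foundedness equivalences from \Cref{L:well-founded equivalences}. For the first claim, let $I$ be a nonzero ideal of $E$. Suppose, for contradiction, that $I$ contains no minimal ideal. Then no nonzero ideal contained in $I$ is minimal, so for every nonzero ideal $J \subseteq I$ there is a nonzero ideal $J'$ with $\{0\} \subsetneq J' \subsetneq J$. Starting from $I_1 := I$ and iterating this choice, we obtain a strictly decreasing sequence $I_1 \supsetneq I_2 \supsetneq \cdots$ of ideals of $E$, contradicting that $E$ is Artinian. Hence $I$ contains a minimal ideal. (Alternatively: the poset of nonzero ideals contained in $I$, ordered by inclusion, is well-founded by \Cref{L:well-founded equivalences}, so it has a minimal element, which is a minimal ideal of $E$.)

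For the second claim, recall from the discussion preceding the proposition that distinct minimal ideals are disjoint, and each minimal ideal is isomorphic to $\R$, so it is generated by a single nonzero positive vector. Suppose there were infinitely many minimal ideals; pick a sequence $(J_n)_{n \in \N}$ of pairwise distinct ones, and choose a nonzero vector $x_n \in J_n$ for each $n$. By pairwise disjointness of the $J_n$, the vectors $(x_n)_{n \in \N}$ are pairwise disjoint. Now form the decreasing sequence of ideals $K_m := I_{\{x_n : n \ge m\}} = \sum_{n \ge m} I_{x_n}$ as in the proof of \Cref{Archimiedean Art=Noeth}: since $E$ is Artinian, this sequence is stationary, so $K_{m_0} = K_{m_0+1}$ for some $m_0$, forcing $x_{m_0} \in \sum_{n > m_0} I_{x_n}$. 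But $x_{m_0}$ is disjoint from every $x_n$ with $n \ne m_0$, hence disjoint from every element of $\sum_{n > m_0} I_{x_n}$, so $x_{m_0} = 0$, a contradiction. Therefore the collection of minimal ideals is finite.

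The only mild subtlety is justifying that $x_{m_0}$ disjoint from each $x_n$ ($n > m_0$) implies $x_{m_0}$ disjoint from every vector in $\sum_{n > m_0} I_{x_n}$; this follows since a finite sum of ideals each disjoint from $x_{m_0}$ is again disjoint from $x_{m_0}$ (if $|y_i| \wedge |x_{m_0}| = 0$ for $i=1,\dots,k$ then $(|y_1|+\cdots+|y_k|) \wedge |x_{m_0}| = 0$, and $|y_1 + \cdots + y_k| \le |y_1| + \cdots + |y_k|$), exactly as used in the cited proof. I expect no real obstacle here, as both parts reduce directly to the absence of strictly decreasing sequences of ideals; the second part is essentially the disjoint-sequence argument of \Cref{Archimiedean Art=Noeth} applied to representatives of the minimal ideals.
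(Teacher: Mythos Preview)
Your proof is correct and follows essentially the same approach as the paper. The first part is identical (the paper uses the well-foundedness formulation you list as an alternative), and for the second part the paper works directly with the minimal ideals $I_n$ and the distributive identity $I_{m_0} \cap \sum_{j>m_0} I_j = \sum_{j>m_0}(I_{m_0}\cap I_j)$ rather than with representative vectors, but since each minimal ideal is one-dimensional your $I_{x_n}$ equals the minimal ideal $J_n$ and the two arguments coincide.
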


\begin{proof}
Suppose $I$ is a nonzero ideal in $E$, and consider the collection of nonzero ideals contained in $I$. This yields a partially ordered set via set inclusion with the property that it does not contain a strictly decreasing sequence, because $E$ is Artinian. By \Cref{L:well-founded equivalences} this set must be well-founded and therefore contains a minimal element. 

Suppose that $E$ has infinitely many minimal ideals and pick a countable subset $\{I_1,I_2,\ldots\}$ of such minimal ideals. For each $k\in \mathbb N$ define the ideal 
$J_m:=\sum_{j=m}^\infty I_j$. Since $E$ is Artinian and the sequence $(J_m)_{m\in \mathbb N}$ is decreasing, there exists $m_0\in \mathbb N$ such that $J_{m_0}=J_{m_0+1}$. This yields that $I_{m_0}\subseteq \sum_{j=m_0+1}^\infty I_j$ and so 
$$I_{m_0} = I_{m_0} \cap\sum_{j=m_0 + 1}^\infty I_j = \sum_{j=m_0+1}^\infty (I_{m_0}\cap I_j).$$ 
Since the intersection of ideals is an ideal, different minimal ideals must be disjoint and we obtain $I_{m_0}=\{0\}$ which is impossible.
\end{proof}

We need the following basic results about subspaces and quotients in vector spaces that will be used later. For the sake of completeness, the proofs are added. 

\begin{lemma}\label{Lemma: equal quotients equal ideals}
Let $E$ be a vector space. Then the following statements hold. 
\begin{enumerate}
    \item For any subset $J$ of $E$ and the quotient projection $\pi\colon E\to E/I$ we have $\pi^{-1}(\pi(J))=I+J$. 
    \item For any subspaces $I$, $J$ and $K$ in $E$ we have $I+J=I+K$ whenever $(I+J)/I=(I+K)/I$.
\end{enumerate}
\end{lemma}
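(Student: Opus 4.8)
The plan is to prove both parts by direct set-theoretic manipulation, with part $(ii)$ reduced to part $(i)$. Throughout, $I$ denotes the fixed subspace of $E$ along which we take the quotient, and $\pi \colon E \to E/I$ is the canonical projection, so that $\ker \pi = I$ and, for any subspace $V$ with $I \subseteq V$, one has $\pi(V) = V/I$.

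For part $(i)$, I would establish the two inclusions separately. For $I + J \subseteq \pi^{-1}(\pi(J))$: given $x = a + j$ with $a \in I$ and $j \in J$, we have $\pi(x) = \pi(a) + \pi(j) = \pi(j) \in \pi(J)$, so $x \in \pi^{-1}(\pi(J))$. For the reverse inclusion, take $x \in \pi^{-1}(\pi(J))$, so that $\pi(x) = \pi(j)$ for some $j \in J$; then $x - j \in \ker \pi = I$, whence $x = (x - j) + j \in I + J$. This settles $(i)$; note that $J$ is only assumed to be a subset, not a subspace, which causes no difficulty since linearity of $J$ is never used.

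For part $(ii)$, I would apply part $(i)$ with the subspace $I + J$ (respectively $I + K$) in place of $J$. Since $I \subseteq I + J$, we have $\pi(I+J) = (I+J)/I$, and by part $(i)$, $\pi^{-1}\bigl((I+J)/I\bigr) = \pi^{-1}\bigl(\pi(I+J)\bigr) = I + (I+J) = I + J$; likewise $\pi^{-1}\bigl((I+K)/I\bigr) = I + K$. The hypothesis $(I+J)/I = (I+K)/I$ therefore forces $I + J = \pi^{-1}\bigl((I+J)/I\bigr) = \pi^{-1}\bigl((I+K)/I\bigr) = I + K$, as claimed.

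There is no genuine obstacle here; the only point requiring care is the bookkeeping between ``image under $\pi$'' and ``quotient by $I$'', which coincide precisely for subspaces containing $I$ — this is exactly why part $(ii)$ is phrased using $I+J$ and $I+K$ rather than $J$ and $K$ directly — together with the mild observation that $J$ in part $(i)$ need not be a subspace.
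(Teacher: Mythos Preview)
Your proof is correct and follows essentially the same approach as the paper: both establish $(i)$ by unwinding the definition of $\pi^{-1}(\pi(J))$ via $\pi(x)=\pi(j)\Leftrightarrow x-j\in I$, and both derive $(ii)$ from $(i)$ by taking preimages of the assumed equality $(I+J)/I=(I+K)/I$. The only cosmetic difference is that the paper phrases $(i)$ as a single chain of equivalences rather than two inclusions, and in $(ii)$ it observes $\pi(J)=\pi(I+J)$ before applying $(i)$ to $J$ itself, whereas you apply $(i)$ to $I+J$; the content is identical.
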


\begin{proof}
    $(i)$: A vector $x$ is an element of $\pi^{-1}(\pi(J))$ if and only if $\pi(x)\in \pi(J)$. Hence, $x\in \pi^{-1}(\pi(J))$ if and only if there exists a $y\in J$ such that $\pi(x)=\pi(y)$ which is equivalent to $x-y\in I$.  

    $(ii)$: Let $\pi \colon E \to E/I$ be the quotient projection. Since $\pi(J)=\pi(I+J)=\pi(I+K)=\pi(K)$, from the assumption it follows that $\pi^{-1}((I+J)/I)=\pi^{-1}((I+K)/I)$. By $(i)$ we have $I+J=I+K$. 
\end{proof}

\begin{proposition}\label{ArtNoeth ideals}
Let $E$ be a vector lattice and $J$ an ideal in $E$. 
Then $E$ is Noetherian (Artinian) if and only if $J$ and $E/J$ are Noetherian (Artinian). \end{proposition}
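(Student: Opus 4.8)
The plan is to reduce everything to two standard correspondences between ideals and then run the usual module-theoretic argument. First I would record the lattice-theoretic facts needed: (a) since $J$ is an ideal of $E$, a subspace of $J$ is an ideal of $J$ precisely when it is an ideal of $E$ --- if $|x|\le|y|$ with $y$ in the subspace, then $y\in J$ forces $x\in J$ and then the defining ideal property inside $J$ applies --- so the ideals of $J$ are exactly the ideals of $E$ contained in $J$; (b) the quotient map $\pi\colon E\to E/J$ is a surjective lattice homomorphism, so $\pi^{-1}$ sends ideals of $E/J$ to ideals of $E$ containing $J$, $\pi$ sends ideals of $E$ to ideals of $E/J$ (surjectivity is used here), and $\pi(\pi^{-1}(\bar I))=\bar I$ for every ideal $\bar I$ of $E/J$; and (c) finite sums and intersections of ideals are ideals, the sum using the Riesz decomposition property. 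I would also invoke \Cref{Lemma: equal quotients equal ideals}(ii).

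For the forward implication, assume $E$ is Noetherian, and treat the Artinian case by the identical argument with ``increasing'' replaced by ``decreasing'' throughout. An increasing sequence of ideals of $J$ is, by (a), an increasing sequence of ideals of $E$, hence stationary, so $J$ is Noetherian. An increasing sequence $(\bar I_n)_n$ of ideals of $E/J$ lifts via $\pi^{-1}$ to an increasing sequence of ideals of $E$, which is stationary; pushing back down with $\pi$ and using $\pi(\pi^{-1}(\bar I))=\bar I$ shows $(\bar I_n)_n$ is stationary, so $E/J$ is Noetherian.

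For the converse, assume $J$ and $E/J$ are Noetherian (again the Artinian case is verbatim the same with ``decreasing''), and let $(I_n)_n$ be an increasing sequence of ideals of $E$. Then $(I_n\cap J)_n$ is an increasing sequence of ideals of $J$ and $((I_n+J)/J)_n$ is an increasing sequence of ideals of $E/J$, so both stabilize: there is an index $k$ with $I_n\cap J=I_k\cap J$ and $(I_n+J)/J=(I_k+J)/J$, hence $I_n+J=I_k+J$ by \Cref{Lemma: equal quotients equal ideals}(ii), for all $n\ge k$. The key step is to deduce $I_n=I_k$ from these two equalities: given $x\in I_n$, write $x=y+z$ with $y\in I_k\subseteq I_n$ and $z\in J$; then $z=x-y\in I_n\cap J=I_k\cap J\subseteq I_k$, so $x=y+z\in I_k$, and the reverse inclusion is trivial. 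In the Artinian case one argues symmetrically: for $n\ge k$ the sequence is decreasing so $I_n\subseteq I_k$, and the same decomposition, now applied to $x\in I_k$, yields $x\in I_n$. Hence $(I_n)_n$ is stationary and $E$ is Noetherian (respectively Artinian).

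I do not expect a genuine obstacle here. The only points that need care are the verification of the two ideal correspondences in the vector-lattice setting --- especially that the image of an ideal under the surjection $\pi$ is again an ideal (via $\pi(v^+\wedge w)$ for a suitable preimage $v$), and that a sum of ideals is an ideal, which is where the Riesz decomposition property enters --- together with the small amount of bookkeeping in the converse that merges the two stabilization indices into one; this last step is uniform across the increasing and decreasing cases once one observes it amounts to the identity $I_n=I_k+(I_n\cap J)$.
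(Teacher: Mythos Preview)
Your proof is correct and follows the paper's argument in the forward direction essentially verbatim. The converse is where you diverge: the paper concludes $I_n = I_k$ from the two stabilizations by invoking the distributivity of the lattice of ideals of a vector lattice, writing a chain of equalities
\[
I_k = I_k + (J\cap I_k) = (I_k+J)\cap I_k = (I_n+J)\cap I_k = \cdots = I_n,
\]
whereas you do a direct element chase ($x\in I_n$, write $x=y+z$ with $y\in I_k$, $z\in J$, observe $z\in I_n\cap J = I_k\cap J$). Your route is the standard module-theoretic one and is more elementary in that it does not appeal to distributivity of the ideal lattice; the paper's route is slicker once one knows that fact, and stays entirely at the level of ideals without touching elements. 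Both handle the Artinian case by the obvious symmetry.
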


\begin{proof}
Suppose first that $E$ is Noetherian. Let $(J_n)_{n\in\mathbb N}$ be an increasing sequence of ideals in $J$. Since $J$ is an ideal in $E$ and since $E$ is Noetherian, this sequence needs to be stationary. 

To see that $E/J$ is Noetherian, pick an increasing sequence $(J_n)_{n\in\mathbb N}$ of ideals in $E/J$. If $\pi\colon E\to E/J$ is the canonical lattice homomorphism, then 
$(\pi^{-1}(J_n))_{n\in \mathbb N}$ is an increasing sequence of ideals in $E$. Since $E$ is Noetherian, it needs to be stationary. From surjectivity of $\pi$ it follows that $\pi(\pi^{-1}(J_n))=J_n$ and so $(J_n)_{n\in\mathbb N}$ is stationary. 

Suppose now that $J$ and $E/J$ are Noetherian. Let $(J_n)_{n\in\mathbb N}$ be an increasing sequence of ideals in $E$. Then $(J_k\cap J)_{k\in\mathbb N}$ and $((J+J_k)/J)_{k\in\mathbb N}$ are increasing sequences of ideals in $J$ and $E/J$, respectively. Hence, there exists $n\in\mathbb N$ such that $J_n\cap J=J_{n+1}\cap J=\cdots$ and $(J_n+J)/J=(J_{n+1}+J)/J=\cdots$.  By \Cref{Lemma: equal quotients equal ideals} we conclude that $J_n+J=J_{n+1}+J=\cdots$. 
Since the lattice of ideals of a vector lattice is distributive, for $m\geq n$ we have
\begin{align*}
   J_n&=J_n+(J\cap J_n)=(J_n+J)\cap J_n=(J_m+J)\cap J_n=(J_m\cap J_n)+(J\cap J_n)\\
   &=(J_m\cap J_n)+(J\cap J_m)=J_m\cap (J_n+J)=J_m\cap (J_m+J)=J_m.
\end{align*}
The proof in the Artinian case is the same.
\end{proof}

The following theorem characterizes finite dimensional vector lattices. 

\begin{theorem}\label{T: Noeth and Art iff finite dim}
A vector lattice $E$ is finite-dimensional if and only if it is Artinian and Noetherian. 
\end{theorem}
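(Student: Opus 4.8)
The plan is to prove the two implications separately; the forward one is elementary, while the converse rests on the structural results just established.

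First, suppose $E$ is finite-dimensional, say $\dim E = n$. Every ideal of $E$ is in particular a linear subspace, so along any increasing sequence of ideals $(I_k)_k$ the integers $\dim I_k$ form a nondecreasing sequence bounded above by $n$, hence eventually constant; since two nested subspaces of the same finite dimension coincide, $(I_k)_k$ is stationary, and $E$ is Noetherian. The argument for a decreasing sequence of ideals is the same, the dimensions now being nonincreasing and bounded below by $0$, so $E$ is also Artinian.

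For the converse, assume $E$ is both Artinian and Noetherian. I would build a finite chain of ideals
\[
\{0\} = I_0 \subsetneq I_1 \subsetneq \cdots \subsetneq I_n = E \quad \text{with} \quad \dim(I_{k+1}/I_k) = 1 \text{ for all } k,
\]
by recursion, starting from $I_0 = \{0\}$. Given an ideal $I_k \subsetneq E$, the quotient $E/I_k$ is nonzero and, by \Cref{ArtNoeth ideals}, again Artinian; applying \Cref{P:artinian_finite_minimal_ideals} to the ideal $E/I_k$ of itself, it contains a minimal ideal $N$, and $N \cong \R$ by \cite[Proposition~II.3.4]{Schaefer}. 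Writing $\pi_k \colon E \to E/I_k$ for the quotient map, the set $I_{k+1} := \pi_k^{-1}(N)$ is an ideal of $E$ with $I_k = \pi_k^{-1}(\{0\}) \subsetneq \pi_k^{-1}(N) = I_{k+1}$ and $I_{k+1}/I_k \cong N \cong \R$.

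This recursion continues as long as $I_k \neq E$ and so produces a strictly increasing sequence of ideals of $E$; since $E$ is Noetherian, such a sequence must be stationary, forcing the recursion to terminate, which happens precisely when some $I_n$ equals $E$. For the chain so obtained, $\dim E = \sum_{k=0}^{n-1} \dim(I_{k+1}/I_k) = n < \infty$, completing the proof. I do not anticipate a real obstacle here: the only step needing slight care is the termination argument above, whereas the existence of a minimal ideal in each quotient (note that the finiteness of the set of minimal ideals is not needed) and the identification $I_{k+1}/I_k \cong \R$ follow directly from \Cref{ArtNoeth ideals,P:artinian_finite_minimal_ideals} and the cited description of minimal ideals.
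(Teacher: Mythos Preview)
Your proof is correct. It is the mirror image of the paper's argument: the paper builds a strictly \emph{decreasing} chain $E \supsetneq M_1 \supsetneq M_2 \supsetneq \cdots$ by repeatedly extracting a maximal ideal (using the Noetherian property via \Cref{Noetherian lattices: maximal ideals} and the fact that ideals of Noetherian lattices are Noetherian), and then invokes the Artinian property to force termination; you instead build a strictly \emph{increasing} chain $\{0\} \subsetneq I_1 \subsetneq I_2 \subsetneq \cdots$ by repeatedly extracting a minimal ideal in the quotient (using the Artinian property via \Cref{P:artinian_finite_minimal_ideals} and the fact that quotients of Artinian lattices are Artinian), and invoke the Noetherian property to terminate. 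Both routes produce a composition series with one-dimensional factors. Your forward direction, via the dimension of nested subspaces, is slightly more self-contained than the paper's appeal to the fact that a finite-dimensional vector lattice has only finitely many ideals.
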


\begin{proof}
If $E$ is finite-dimensional, it has only finitely many ideals, and hence, it must be Artinian and Noetherian. 
Now, suppose that $E$ is Artinian and Noetherian. If $E$ is infinite-dimensional, then it is possible to construct a strictly decreasing sequence of ideals in $E$. 
Indeed, since $E$ is Noetherian, by \Cref{Noetherian lattices: maximal ideals} it contains a maximal ideal $M_1$ which has codimension one in $E$. 
Since $M_1$ is Noetherian by \Cref{ArtNoeth ideals}, it follows that $M_1$ contains a maximal ideal $M_2$ of codimension one in $M_1$. Recursively, we can now construct a strictly decreasing sequence $(M_n)_{n\in \mathbb N}$ of ideals contradicting the fact that $E$ is Artinian.  
\end{proof}

The following corollary immediately follows from the preceding theorem and \cite[Theorem II.3.6]{Schaefer}. The space $\mathbb R^n_{\lex}$ refers to the lexicographically ordered space described in \cite[Example~II.1.7]{Schaefer}.

\begin{corollary}
A totally ordered nonzero vector lattice $E$ is isomorphic to $\mathbb R^{n}_{\lex}$ for some $n \in \mathbb N$ if and only if it is Artinian and Noetherian. 
\end{corollary}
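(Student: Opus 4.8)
The plan is to reduce the statement to \Cref{T: Noeth and Art iff finite dim}, which asserts that a vector lattice is Artinian and Noetherian exactly when it is finite-dimensional. Thus it suffices to prove that a totally ordered nonzero vector lattice $E$ is isomorphic to $\R^n_{\lex}$ for some $n \in \N$ if and only if $\dim E < \infty$. The forward implication is then immediate: $\R^n_{\lex}$ has dimension $n$, hence is finite-dimensional, hence Artinian and Noetherian by \Cref{T: Noeth and Art iff finite dim}.

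For the converse I would assume $E$ is totally ordered, nonzero and finite-dimensional, say $\dim E = n$, and argue by induction on $n$ that $E \cong \R^n_{\lex}$ (alternatively, invoking \cite[Theorem~II.3.6]{Schaefer} directly). The base case $n = 1$ is clear, as the only one-dimensional vector lattice is $\R$. For the inductive step, the first observation is that in a totally ordered vector lattice the zero ideal is prime, so by the remark following the characterization of prime ideals in the Preliminaries the family of all ideals of $E$ is totally ordered by inclusion. Since $E$ is finite-dimensional it is Noetherian, so by \Cref{Noetherian lattices: maximal ideals} it has a strong unit and a maximal ideal $M$; because the ideals of $E$ form a chain, $M$ must contain every proper ideal (any proper ideal not contained in $M$ would strictly contain $M$ and hence equal $E$ by maximality), so $E$ is local and $E \cong \R \circ M$ by \cite[Proposition~II.3.7]{Schaefer}. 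The ideal $M$ is again totally ordered, and it is finite-dimensional of dimension $n - 1$ since maximal ideals have codimension one; by the induction hypothesis $M \cong \R^{n-1}_{\lex}$, and therefore $E \cong \R \circ \R^{n-1}_{\lex} \cong \R^n_{\lex}$, using the standard recursive description of the lexicographic order.

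The only step requiring a small argument is the observation that the ideals of a totally ordered vector lattice form a chain (equivalently, that $E$ is local once it has a strong unit); everything else is bookkeeping and citations of results already established, so no genuine obstacle is anticipated.
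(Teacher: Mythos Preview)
Your proposal is correct and follows the same approach as the paper: reduce to finite-dimensionality via \Cref{T: Noeth and Art iff finite dim}, then invoke the classification of finite-dimensional totally ordered vector lattices. The paper simply cites \cite[Theorem~II.3.6]{Schaefer} for the latter, whereas you additionally sketch a self-contained induction (using that $\{0\}$ is prime so the ideal lattice is a chain, hence $E$ is local and $E \cong \R \circ M$); this is a correct and natural way to reprove that Schaefer result, but not a genuinely different strategy.
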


The intersection of all maximal ideals of a vector lattice $E$ is called the \emph{radical} and it is denoted by $R(E)$. If $E$ does not contain any maximal ideals, then we define $R(E):=E$. 

\begin{proposition}
For a nonzero Noetherian vector lattice or an Artinian vector lattice the following assertions are equivalent. 
\begin{enumerate}
    \item $E$ is Archimedean.
    \item $R(E) = \{0\}$. 
    \item $E$ is isomorphic to $\mathbb R^n$ for some $n\in \mathbb N$ with the coordinatewise order. 
\end{enumerate}
\end{proposition}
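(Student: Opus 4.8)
The plan is to prove the three implications $(i)\Rightarrow(iii)$, $(iii)\Rightarrow(ii)$ and $(ii)\Rightarrow(i)$, which together with the trivial observation that $\R^n$ with the coordinatewise order is Archimedean (so $(iii)\Rightarrow(i)$) gives the full equivalence. The Noetherian and the Artinian hypotheses are handled uniformly: the only point at which either of them is used at all is a single appeal to \Cref{Archimiedean Art=Noeth}. We may assume $E\neq\{0\}$, the zero lattice being trivial.

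For $(i)\Rightarrow(iii)$ I would argue as follows. If $E$ is Archimedean and is Noetherian or Artinian, then $E$ is finite-dimensional by \Cref{Archimiedean Art=Noeth}. Now invoke the classical structure theorem for finite-dimensional vector lattices, \cite[Theorem~II.3.6]{Schaefer}: every finite-dimensional vector lattice is lattice isomorphic to a finite direct sum $\bigoplus_i\R^{n_i}_{\lex}$. Since for $m\ge2$ the space $\R^m_{\lex}$ is not Archimedean --- indeed $n\,(0,\dots,0,1)\le(1,0,\dots,0)$ for every $n$, while $(0,\dots,0,1)\neq0$ --- the Archimedean hypothesis forces every $n_i=1$. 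Hence $E\cong\R^n$ with the coordinatewise order, where $n=\dim E$.

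For $(iii)\Rightarrow(ii)$: in $E=\R^n$ with the coordinatewise order each coordinate hyperplane $M_k=\{x\in\R^n:x_k=0\}$ is an ideal of codimension one, hence maximal, and $\bigcap_{k=1}^n M_k=\{0\}$, so $R(E)=\{0\}$. For $(ii)\Rightarrow(i)$, suppose $R(E)=\{0\}$. First, $E$ must have at least one maximal ideal, since otherwise $R(E)=E\neq\{0\}$ by the convention fixing the radical in that case. For any maximal ideal $M$ the quotient $E/M$ has codimension one by \cite[Corollary~p.66]{Schaefer}, hence is a one-dimensional vector lattice, hence lattice isomorphic to $\R$; let $\pi_M\colon E\to\R$ denote the associated positive quotient lattice homomorphism. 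If $x,y\in E_+$ satisfy $nx\le y$ for all $n\in\N$, then applying $\pi_M$ gives $0\le n\,\pi_M(x)\le\pi_M(y)$ in $\R$ for all $n$, whence $\pi_M(x)=0$, i.e. $x\in M$. Letting $M$ run over all maximal ideals of $E$ yields $x\in R(E)=\{0\}$, so $x=0$ and $E$ is Archimedean.

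The implications $(iii)\Rightarrow(i)$, $(iii)\Rightarrow(ii)$ and $(i)\Rightarrow(iii)$ are little more than bookkeeping on top of \Cref{Archimiedean Art=Noeth} and the cited structure theorem, so the only step carrying genuine content is $(ii)\Rightarrow(i)$; there the sole subtlety is to dispose of the degenerate ``no maximal ideal'' case before running the quotient argument, after which everything is routine. (Alternatively, in the Noetherian case one could quote \Cref{Noetherian lattices: maximal ideals} for the existence of maximal ideals, but the uniform argument above covers the Artinian case at the same time.)
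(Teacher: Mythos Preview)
Your proof is correct and follows the same cycle of implications as the paper, $(i)\Rightarrow(iii)\Rightarrow(ii)\Rightarrow(i)$; the only difference is that where the paper cites Schaefer directly (the structure of finite-dimensional Archimedean vector lattices for $(i)\Rightarrow(iii)$, and the Corollary after \cite[Proposition~II.3.3]{Schaefer} for $(ii)\Rightarrow(i)$), you unpack those citations into explicit arguments. Your treatment is thus a more self-contained version of the same proof.
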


\begin{proof} 
Suppose that $E$ is a Noetherian or an Artinian vector lattice. If $(i)$ holds, then $E$ is finite-dimensional by \Cref{Archimiedean Art=Noeth} and hence, $E$ is lattice isomorphic to $\mathbb R^n$ for some $n\in \mathbb N$ with the coordinatewise order. That $(iii)$ implies $(ii)$ is clear and $(ii)$ implies $(i)$ by the Corollary following \cite[Proposition II.3.3]{Schaefer}.   
\end{proof} 

\begin{proposition}\label{finite-dimensional radical}
Let $E$ be a nonzero vector lattice and let $R(E)$ be its radical. Then $R(E)\neq E$ and $E/R(E)$ is finite-dimensional in either of the following cases.  
\begin{enumerate}
    \item $E$ is Noetherian. 
    \item $E$ is Artinian with a strong unit. 
\end{enumerate}
\end{proposition}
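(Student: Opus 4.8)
The plan is to reduce the statement to the proposition immediately preceding it, which asserts that a nonzero Noetherian or Artinian vector lattice with trivial radical is lattice isomorphic to $\R^n$ for some $n \in \N$. Concretely, I would first establish that $R(E) \neq E$, so that $E/R(E)$ is a nonzero vector lattice; then show that $E/R(E)$ again satisfies the hypotheses of that proposition and has trivial radical; and finally invoke it to conclude that $E/R(E) \cong \R^n$, which in particular is finite-dimensional.

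For the first step, in case $(i)$ the vector lattice $E$ has a maximal ideal by \Cref{Noetherian lattices: maximal ideals}, and in case $(ii)$ it has one because it has a strong unit, by \cite[Theorem~27.4]{Zaanen}. In both cases $R(E)$ is contained in a proper ideal, hence $R(E) \neq E$ and $E/R(E) \neq \{0\}$. By \Cref{ArtNoeth ideals}, $E/R(E)$ is Noetherian in case $(i)$ and Artinian in case $(ii)$, so it falls under the hypotheses of the preceding proposition; and since $E$ has a maximal ideal, so does $E/R(E)$ (so the radical of $E/R(E)$ is a genuine intersection, not $E/R(E)$ by convention).

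The heart of the argument is the identity $R(E/R(E)) = \{0\}$. Here I would use the standard correspondence theorem: the ideals of $E/R(E)$ are exactly the quotients $I/R(E)$ with $I$ an ideal of $E$ containing $R(E)$, and this inclusion-preserving bijection carries maximal ideals to maximal ideals. Since every maximal ideal of $E$ contains $R(E)$ by definition of the radical, the maximal ideals of $E/R(E)$ are precisely the $M/R(E)$ with $M$ maximal in $E$; intersecting them, and using $R(E) \subseteq M$ so that $x + R(E) \in M/R(E)$ iff $x \in M$, gives $R(E/R(E)) = \bigl(\bigcap_M M\bigr)/R(E) = R(E)/R(E) = \{0\}$. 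Applying the preceding proposition to $E/R(E)$ then yields $E/R(E) \cong \R^n$, completing the proof. The only step needing genuine care is this last computation — essentially, that ``the radical of the quotient by the radical is zero'' — and it hinges on the fact that all maximal ideals of $E$ contain $R(E)$; everything else is bookkeeping on top of results already proved in this section.
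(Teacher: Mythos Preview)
Your proof is correct. For case $(i)$ it coincides with the paper's argument: both observe that $E/R(E)$ is Noetherian (via \Cref{ArtNoeth ideals}) and Archimedean/has trivial radical, and then invoke the finite-dimensionality result; the paper cites Archimedeanness directly from \cite[Proposition~II.3.3]{Schaefer}, while you take the equivalent route through $R(E/R(E))=\{0\}$ and the preceding proposition.

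For case $(ii)$, however, your approach is genuinely different from the paper's and in fact more economical. The paper argues directly that $E$ has only finitely many maximal ideals: assuming an infinite collection $(M_n)_{n\in\N}$, it forms the decreasing chain $I_n = M_1 \cap \cdots \cap M_n$, uses the Artinian property to get $I_k = I_{k+1}$, and then repeatedly applies primality of $M_{k+1}$ to peel off factors until reaching the contradiction $M_1 \subseteq M_{k+1}$. Your argument instead treats both cases uniformly: $E/R(E)$ is Artinian by \Cref{ArtNoeth ideals}, has trivial radical by the correspondence theorem, and is therefore isomorphic to some $\R^n$ by the preceding proposition. This is shorter and avoids the ad hoc chain argument; the paper's proof, on the other hand, is self-contained in the sense that it does not rely on the equivalence $R(E)=\{0\} \Leftrightarrow E$ Archimedean from \cite{Schaefer}, and it makes the stronger intermediate fact (finitely many maximal ideals) explicit.
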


\begin{proof}
$(i)$: If $E$ is Noetherian, then by \Cref{ArtNoeth ideals} the vector lattice $E/R(E)$ is also Noetherian. Since it is Archimedean by \cite[Proposition II.3.3]{Schaefer}, by \cite[Proposition 5.1]{Kandic-Roelands}, $E/R(E)$ is finite-dimensional. Since Noetherian vector lattices have strong units by \Cref{Noetherian lattices: maximal ideals},  $E$ contains maximal ideals and so $R(E)\neq E$. 

$(ii)$: Since $E$ has a strong unit, $E$ contains maximal ideals. Suppose that there is a countable set $(M_n)_{n\in\mathbb N}$ of maximal ideals in $E$. For each $n\in \mathbb N$ we define the ideal $I_n:=M_1\cap \cdots \cap M_n$. Since maximal ideals have codimension one in $E$, the codimension of the ideal $I_n$ in $E$ is $n$. The decreasing sequence $(I_n)_{n\in\mathbb N}$ is stationary, so there exists $k\in \mathbb N$ such that $I_k=I_{k+1}$ forcing
$M_1\cap \cdots \cap M_k\subseteq M_{k+1}$. Since $M_{k+1}$ is a prime ideal, we have that $M_k\subseteq M_{k+1}$ or $I_{k-1}\subseteq M_{k+1}$. Since $M_k$ is maximal in $E$, we conclude that $I_{k-1}$ is contained in $M_{k+1}$. Repeating this argument we conclude that $M_1\subseteq M_{k+1}$ which is impossible. This shows that there does not exist an infinite set of maximal ideals from where it follows that the radical has finite codimension in $E$. 
\end{proof}

\begin{theorem}\label{T:finite_sum_local_ideals}
Let $E$ be a nonzero vector lattice with a strong unit such that its radical $R(E)$ has codimension $k\in \mathbb N$. Then $E$ is the sum of $k$ mutually disjoint ideals $I_1,\ldots,I_k$ each of which possesses a unique maximal ideal $M_j$. Furthermore, $I_j$ is of the form $I_j=\mathbb R\circ M_j$. This decomposition of $E$ is unique except for a permutation of indices.
\end{theorem}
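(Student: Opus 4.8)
The plan is to induct on $k$, peeling off one local summand at each step, and to organise everything around the quotient map $\pi\colon E\to E/R(E)$. First I would set up the scaffolding. Since $E$ has a strong unit it has a maximal ideal by \cite[Theorem~27.4]{Zaanen}, so $R(E)\neq E$ and $k\geq 1$. As $E/R(E)$ is Archimedean by \cite[Proposition~II.3.3]{Schaefer}, is $k$-dimensional, and has a strong unit (the image of one in $E$), it is lattice isomorphic to $\R^k$ with the coordinatewise order; identify it with $\R^k$, write $H_i=\{x\in\R^k\colon x_i=0\}$ and $e_i$ for the standard basis. Every maximal ideal of $E$ contains $R(E)$, so the maximal ideals of $E$ are exactly $N_i:=\pi^{-1}(H_i)$ for $i=1,\dots,k$, and $\bigcap_i N_i=R(E)$. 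Two observations are used throughout: by \Cref{Lemma: equal quotients equal ideals}$(i)$ and $R(E)\subseteq N_i$ one has $w\in N_i\iff \pi(w)\in H_i$; and since $E$ has a strong unit, every proper ideal of $E$ lies in a maximal ideal (Zorn), so any $w\in E_+$ with $w\notin\bigcup_i N_i$, equivalently with $\pi(w)$ a strong unit of $\R^k$, is a strong unit of $E$.

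For the existence statement, the base case $k=1$ is immediate: then $R(E)$ has codimension one, hence coincides with the unique maximal ideal, $E$ is local, and $E\cong\R\circ R(E)$ by \cite[Proposition~II.3.7]{Schaefer}. For $k\geq 2$, I would pick a strong unit $e\in E_+$, write $\pi(e)=\sum_{j=1}^k c_j e_j$ with all $c_j>0$, and (using surjectivity of $\pi$ on positive cones) choose $a,b\in E_+$ with $\pi(a)=c_1e_1$ and $\pi(b)=\sum_{j\geq 2}c_j e_j$. Since $\pi(a)\wedge\pi(b)=0$, the elements $u:=a-a\wedge b$ and $v:=b-a\wedge b$ lie in $E_+$, satisfy $u\wedge v=0$, and have $\pi(u)=c_1e_1$, $\pi(v)=\sum_{j\geq 2}c_j e_j$, $\pi(u+v)=\pi(e)$; so $u+v$ is a strong unit by the scaffolding. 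Hence the principal ideals $I_u$ and $I_v$ are disjoint with $I_u+I_v=E$, which forces $E=I_u\oplus I_v$ (a finite family of pairwise disjoint ideals summing to $E$ is an internal direct sum, since any element of one summand lying in the sum of the others is disjoint from itself, hence zero). As maximal ideals are prime, each maximal ideal of $E=I_u\oplus I_v$ contains exactly one of $I_u,I_v$ and restricts to a maximal ideal of the other; thus the maximal ideals of $I_u$ correspond to those $N_i$ with $I_v\subseteq N_i$, i.e. with $\pi(v)\in H_i$ — only $N_1$, as $c_i>0$ for $i\geq 2$. So $I_u$ is local with maximal ideal $M_1:=N_1\cap I_u$ and strong unit $u$, whence $I_u\cong\R\circ M_1$. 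Symmetrically $I_v$ has exactly the maximal ideals $N_i\cap I_v$, $i=2,\dots,k$, so $R(I_v)=\big(\bigcap_{i\geq 2}N_i\big)\cap I_v=R(E)\cap I_v$ (the second equality because $\pi(I_v)=H_1=\pi(N_1)$), and $\pi$ identifies $I_v/R(I_v)$ with $\pi(I_v)=H_1\cong\R^{k-1}$. Hence $R(I_v)$ has codimension $k-1$ in the nonzero vector lattice $I_v$, which has strong unit $v$, and the induction hypothesis gives $I_v=\bigoplus_{j=2}^k I_j$ with $I_j$ pairwise disjoint local ideals of $I_v$ (hence of $E$), each of the form $\R\circ M_j$. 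Setting $I_1:=I_u$, which is disjoint from each $I_j$ since $I_u\perp I_v$, yields $E=\bigoplus_{j=1}^k I_j$.

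For uniqueness, suppose $E=\bigoplus_{i=1}^k I_i=\bigoplus_{j=1}^{k'}I_j'$ with all summands local. By \Cref{P:local_ideal_in_summand}, for each $j$ there is an index $\sigma(j)$ with $I_j'\subseteq I_{\sigma(j)}$. Applying the linear projection $P_i\colon E\to I_i$ of the first decomposition to the identity $E=\sum_j I_j'$, and using $P_i(I_j')=\{0\}$ whenever $\sigma(j)\neq i$, I get $I_i=\bigoplus_{j\in\sigma^{-1}(i)}I_j'$. A local vector lattice is indecomposable: a proper ideal of it lies inside its maximal ideal, so it cannot be a direct sum of two nonzero ideals; hence $|\sigma^{-1}(i)|=1$ for every $i$, so $\sigma$ is a bijection, $k=k'$, and $I_i=I'_{\sigma^{-1}(i)}$, i.e. the two decompositions agree up to a permutation of indices.

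The main obstacle I anticipate is the peeling step: passing from the clean band decomposition of $E/R(E)\cong\R^k$ to genuinely disjoint elements of $E_+$ whose sum is a strong unit. Disjointness is arranged by the trick $u=a-a\wedge b$, $v=b-a\wedge b$, while the point that $\pi$ detects every maximal ideal of $E$ is precisely what upgrades $\pi(u+v)=\pi(e)$ to ``$u+v$ is a strong unit''. The rest is bookkeeping with the induction and with the correspondence between the maximal ideals of a finite direct sum and those of its summands.
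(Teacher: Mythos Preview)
Your proof is correct and takes a genuinely different route from the paper's. The paper proceeds directly: it invokes \cite[Lemma~II.3.8]{Schaefer} to lift the entire standard basis of $E/R(E)\cong\R^k$ to pairwise disjoint vectors $x_1,\dots,x_k\in E$ in one stroke, then argues that the ideal generated by their span is all of $E$ (using the strong unit to rule out containment in a maximal ideal), sets $J_j=I_{x_j}$, and verifies directly that $M_j:=R(E)\cap J_j$ is the unique maximal ideal of $J_j$. For uniqueness it simply cites \cite[Theorem~II.3.9]{Schaefer}. Your argument instead inducts on $k$, peeling off one local summand at a time via the hand-made disjointification $u=a-a\wedge b$, $v=b-a\wedge b$; this effectively reproves (the two-term case of) Schaefer's lifting lemma from scratch, trading a black-box citation for an explicit construction plus an induction. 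Your uniqueness argument, based on \Cref{P:local_ideal_in_summand} and indecomposability of local ideals, is self-contained within the paper, whereas the paper defers to Schaefer; in that respect your treatment is more informative. The paper's approach is shorter and gets all $k$ summands at once; yours is more elementary and shows clearly why the strong-unit hypothesis is exactly what converts ``$\pi(u+v)$ is a strong unit of $\R^k$'' into ``$u+v$ is a strong unit of $E$''.
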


\begin{proof}
Since $E/R(E)$ is Archimedean and finite-dimensional, by \Cref{finite-dimensional radical}, $E/R(E)$ is lattice isomorphic to $\mathbb R^k$ ordered componentwise. By \cite[Lemma II.3.8, p.~49]{Schaefer} there exist pairwise disjoint vectors $x_1,\ldots,x_k$ in $E$ such that the image of $x_j$ in $E/R(E)\cong \mathbb R^k$ corresponds to the standard basis vector $e_j$. Let $V$ be the linear span of $\{x_1,\ldots,x_k\}$. Then $E=V+R(E)$. Let $J$ be the ideal generated by $V$. We claim that $J=E$. If $J$ would be a proper ideal in $E$, since $J$ does not contain $R(E)$, then $J$ is certainly not maximal. Hence, since $E$ has a strong unit, $J$ is contained in some maximal ideal $M$. Since $M$ also contains $R(E)$, we conclude $E=J+R(E)=M$ which is impossible. This proves $J=E$.  

Let $J_j$ be the ideal generated by $x_j$. Then $E=J_1\oplus \cdots \oplus J_k$.
Let $M_j:=R(E)\cap J_j$. We claim that the codimension of $M_j$ in $J_j$ is one, showing that $M_j$ is maximal in $J_j$. Indeed, note that 
$$\R^k \cong E/R(E) = (J_1 \oplus \dots \oplus J_k)/R(E) \cong (J_1 + R(E))/R(E) \oplus \dots \oplus (J_k + R(E))/R(E).$$ Since $x_j \notin R(E)$, all the quotient lattices $(J_j + R(E))/R(E)$ have dimension one. The claim now follows from the fact that $(J_j + R(E))/R(E) \cong J_j / (R(E) \cap J_j) = J_j / M_j$. 

If $K$ is any maximal ideal in $J_j$, then $K':=K+\sum_{i\neq j}J_i$ is a maximal ideal in $E$. Hence, $R(E)\subseteq K'$ and so 
$K=K'\cap J_j\supseteq R(E)\cap J_j=M_j$. This implies $K=M_j$. \cite[Proposition II.3.7]{Schaefer} yields that $J_j\cong \mathbb R\circ M_j$. 
The proof of the uniqueness part is the same as in \cite[Theorem II.3.9]{Schaefer}. 
\end{proof}

An application of \Cref{finite-dimensional radical} shows that \Cref{T:finite_sum_local_ideals} is applicable in the case of Noetherian vector lattices or Artinian vector lattices with strong units. 

\section{Lexicographically ordered vector spaces and lattices}

Let $X$ be a poset. By \Cref{L:well-founded closed under unions} the set of functions $f \colon X \to \R$ with well-founded support form a vector space, since $\supp(f+g) \subseteq \supp(f) \cup \supp(g)$. We denote by $m(f) := \min(\supp(f))$ the minimal elements of the support of $f$.
\begin{definition}
Let $X$ be a poset. Then we define the vector space and subset
\[ 
\R^X_{\lex} := \{ f \colon X \to \R \colon \supp(f) \text{ is well-founded} \}, \quad 
\R^X_{\lex+} := \{ f \in \R^X_{\lex} \colon \forall x \in m(f) \ f(x) > 0 \}.
\]
\end{definition}
\begin{lemma}\label{L:well-founded cone}
Let $X$ be a poset. Then 
\begin{align*}
    \R^X_{\lex+} &= \{ f \in \R^X_{\lex} \colon \forall x \in X \ (f(x) < 0 \Rightarrow \exists y \in \sda x \cap m(f) \ f(y) > 0) \} \\
    &= \{ f \in \R^X_{\lex} \colon \forall x \in X \ (f(x) < 0 \Rightarrow \exists y \in \sda x \ f(y) > 0) \}.
\end{align*} Furthermore, $\R^X_{\lex+}$ is a cone.
\end{lemma}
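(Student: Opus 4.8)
The plan is to prove the two displayed equalities through the cyclic chain of inclusions $\R^X_{\lex+} \subseteq A \subseteq B \subseteq \R^X_{\lex+}$, where $A$ and $B$ denote the first and second sets on the right-hand side of the display (so membership in $A$ asks, whenever $f(x) < 0$, for a witness $y \in \sda x \cap m(f)$ with $f(y) > 0$, while $B$ asks only for a witness $y \in \sda x$ with $f(y) > 0$). After that I will check that $\R^X_{\lex+}$ is a cone by verifying $0 \in \R^X_{\lex+}$, stability under multiplication by nonnegative scalars, stability under addition, and pointedness $\R^X_{\lex+} \cap (-\R^X_{\lex+}) = \{0\}$.

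For $\R^X_{\lex+} \subseteq A$: if $f \in \R^X_{\lex+}$ and $f(x) < 0$, then $x \in \supp(f)$, which is well-founded, so Lemma~\ref{L:well-founded equivalences} (applied to the poset $\supp(f)$) shows that $x$ dominates some $y \in \min(\supp(f)) = m(f)$; since $f(y) > 0$ by definition of $\R^X_{\lex+}$ whereas $f(x) < 0$, necessarily $y < x$, which is the required witness. The inclusion $A \subseteq B$ is immediate. For $B \subseteq \R^X_{\lex+}$: let $f \in B$ and $x \in m(f)$; then $f(x) \ne 0$ and $f(y) = 0$ for all $y < x$ (as $\sda x \cap \supp(f) = \emptyset$), so the defining condition of $B$ rules out $f(x) < 0$, leaving $f(x) > 0$.

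The cone axioms other than stability under addition are routine: $m(0) = \emptyset$, so $0 \in \R^X_{\lex+}$; for $\lambda > 0$ one has $\supp(\lambda f) = \supp(f)$ and $m(\lambda f) = m(f)$ with $(\lambda f)(x) = \lambda f(x) > 0$ on $m(f)$, so $\lambda f \in \R^X_{\lex+}$ for every $\lambda \ge 0$; and if $0 \ne f$ with $f, -f \in \R^X_{\lex+}$, choosing $x \in m(f)$ (nonempty because $\supp(f)$ is nonempty and well-founded) gives $f(x) > 0$ while $m(-f) = m(f)$ forces $(-f)(x) > 0$, a contradiction.

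The main obstacle is stability under addition: that $f, g \in \R^X_{\lex+}$ implies $f + g \in \R^X_{\lex+}$. First, $\supp(f+g) \subseteq \supp(f) \cup \supp(g)$ is well-founded by Lemma~\ref{L:well-founded closed under unions}, so $f + g \in \R^X_{\lex}$, and it remains to show $f + g \in B$. I argue by contradiction: suppose there is an $x$ with $(f+g)(x) < 0$ while $(f+g)(y) \le 0$ for all $y < x$. I then build a strictly decreasing sequence $x = x_0 > x_1 > x_2 > \cdots$ and functions $\psi_n \in \{f,g\}$ with $\psi_n(x_n) < 0$. Since $(f+g)(x) < 0$, one of $f, g$ is negative at $x$; take that one to be $\psi_0$. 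Given $x_n$ and $\psi_n$ with $\psi_n(x_n) < 0$, the property $\psi_n \in B$ yields $x_{n+1} < x_n$ with $\psi_n(x_{n+1}) > 0$; as $x_{n+1} < x$ we get $(f+g)(x_{n+1}) \le 0$, so letting $\psi_{n+1}$ be the other of $f, g$ (so that $\psi_n + \psi_{n+1} = f + g$) we obtain $\psi_{n+1}(x_{n+1}) \le -\psi_n(x_{n+1}) < 0$, which continues the construction. Every $x_n$ then lies in $\supp(f) \cup \supp(g)$, so this strictly decreasing sequence contradicts Lemma~\ref{L:well-founded equivalences} (a well-founded poset has no strictly decreasing sequences). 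Hence $f + g \in B = \R^X_{\lex+}$, finishing the proof.
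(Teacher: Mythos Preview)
Your proof is correct. The chain of inclusions for the displayed equalities and the routine cone axioms (zero, nonnegative scalars, pointedness) match the paper's argument essentially verbatim.

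The only genuine difference is in the closure-under-addition step. The paper works directly with the definition of $\R^X_{\lex+}$: it picks $x \in m(f+g)$ with $(f+g)(x) < 0$, uses the \emph{stronger} characterization $A$ to find $y \in \sda x \cap m(f)$ with $f(y) > 0$, observes $(f+g)(y) = 0$ so $g(y) < 0$, and then finds $z < y$ with $g(z) > 0$; because $y \in m(f)$ forces $f(z) = 0$, one gets $(f+g)(z) > 0$, contradicting $x \in m(f+g)$ after only two steps. Your route instead verifies membership in $B$ by infinite descent: you alternate between $f$ and $g$ to produce a strictly decreasing sequence in $\supp(f) \cup \supp(g)$ and invoke well-foundedness of that union. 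The paper's approach is shorter because the extra information $y \in m(f)$ terminates the argument immediately; your approach trades that observation for a recursion and an appeal to \Cref{L:well-founded closed under unions}, which is perfectly valid and perhaps more transparently explains \emph{why} the characterization $B$ alone already suffices.
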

\begin{proof}
Let $f \in \R^X_{\lex +}$ and let $x \in X$ with $f(x) < 0$, then $\da x \cap \supp(f)$ is well-founded so it contains a minimal element $y$ which is also minimal in $\supp(f)$, hence $f(y) > 0$. The second set is clearly a subset of the third set. Finally, if $f \in \R^X_{\lex}$ is such that for all $x \in X$ with $f(x) < 0$, there exists an $y < x$ with $f(y) > 0$, then clearly $f(x) > 0$ for all minimal $x \in \supp(f)$.

It is obvious that $\R^X_{\lex+}$ is invariant under multiplication by positive scalars, and if $0 \not= f \in \R^X_{\lex+}$, then $\supp(f) \not= \emptyset$ so it contains a minimal element, hence $-f \notin \R^X_{\lex+}$, showing that $\R^X_{\lex+} \cap -\R^X_{\lex+} = \{0\}$. It remains to show that $\R^X_{\lex+}$ is closed under addition, so let $f,g \in \R^X_{\lex+}$. Let $x \in m(f+g)$ and suppose that $(f+g)(x) < 0$. Then either $f(x) < 0$ or $g(x) < 0$; say $f(x) < 0$. Hence there is a $y \in \sda x \cap m(f)$ with $f(y) > 0$, and so $g(y) < 0$ since $(f+g)(y) = 0$. Therefore there is a $z < y$ with $g(z) > 0$, and so $(f+g)(z) = g(z) > 0$, so $z \in \supp(f+g)$ contradicting the fact that $x \in m(f+g)$ and $z < x$. Hence $(f+g)(x) > 0$ and so $f+g \in \R^X_{\lex+}$.
\end{proof}

From now on, we will always consider the order generated by $\R^X_{\lex+}$ on $\R^X_{\lex}$. If $A \subseteq X$, $\R^A_{\lex}$ will be considered to be a (partially ordered) subspace of $\R^X_{\lex}$. For $x \in X$, the function $e_x$ is defined by $e_x(y) := \delta_{xy}$.

We will next investigate conditions under which $\R^X_{\lex}$ is a vector lattice.

\begin{definition}
A partially ordered set $X$ is defined to be a \emph{forest} if two incomparable elements have no common upper bound, or equivalently, if $\da x$ is totally ordered for every $x \in X$.
\end{definition}

Let $X$ be a forest and let $f \in \R^X_{\lex}$. We define 
\[
m_+(f) := \{ s \in m(f) \colon f(s) > 0\} \quad \mbox{and} \quad m_-(f) := \{ s \in m(f) \colon f(s) < 0\},
\]and since $m(f)$ is an antichain, $\ua m_+(f)$ and $\ua m_-(f)$ are disjoint, and so $\supp(f) \subseteq \ua m(f) = \ua m_+(f) \sqcup \ua m_-(f)$, therefore\[
f = f \cdot \mathbf{1}_{\supp(f)} = f \cdot \mathbf{1}_{\ua m_+(f)} + f \cdot \mathbf{1}_{\ua m_-(f)},
\]
where the multiplication is defined pointwise. We will now show that the positive part of $f$ equals $f \cdot \mathbf{1}_{\ua m_+(f)}$.

\begin{theorem}\label{T:forest_implies_lattice}
Let $X$ be a forest. Then $\R^X_{\lex}$ is a vector lattice. Furthermore, for $f \in \R^X_{\lex}$, the supremum of $f$ and $-f$ is given by $|f| = f \cdot \mathbf{1}_{\ua m_+(f)} - f \cdot \mathbf{1}_{\ua m_-(f)}$.
\end{theorem}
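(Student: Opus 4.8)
The plan is to exploit the forest structure through the principal down-sets $\da x$, each of which is a chain, and thereby reduce everything to the totally ordered case. Two preliminary observations drive the argument. First, for any chain $C$ the space $\R^C_{\lex}$ is totally ordered: if $0 \neq f \in \R^C_{\lex}$ then $\supp(f)$ is well-founded and totally ordered, hence has a least element $s$, and by definition $f \in \R^C_{\lex+}$ precisely when $f(s) > 0$; so either $f > 0$ or $f < 0$. Second — and this is where the forest hypothesis enters — for $f \in \R^X_{\lex}$ one has $f \in \R^X_{\lex+}$ if and only if $f|_{\da x} \in \R^{\da x}_{\lex+}$ for every $x \in X$. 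This follows from the third description of the cone in \Cref{L:well-founded cone}: if $f \geq 0$ and $f|_{\da x} \neq 0$, the least element $m_x$ of $\supp(f) \cap \da x$ cannot have $f(m_x) < 0$, since then some $y < m_x$ would satisfy $f(y) > 0$, contradicting minimality of $m_x$; conversely, if every restriction is positive and $f(x) < 0$, then the least element of $\supp(f) \cap \da x$ lies strictly below $x$ and carries a positive value. Since restriction to $\da x$ is linear, it follows that for $f, k \in \R^X_{\lex}$ we have $k \geq f$ in $\R^X_{\lex}$ if and only if $k|_{\da x} \geq f|_{\da x}$ in $\R^{\da x}_{\lex}$ for all $x$.

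Next I would fix $f \in \R^X_{\lex}$ and set $g := f \cdot \mathbf{1}_{\ua m_+(f)}$ and $h := -f \cdot \mathbf{1}_{\ua m_-(f)}$, so that the claimed formula reads $|f| = g + h$; note $g, h \geq 0$, with $\supp(g)$ and $\supp(h)$ contained in $\supp(f)$ and in the disjoint up-sets $\ua m_+(f)$ and $\ua m_-(f)$ respectively. The heart of the matter is to compute $(g+h)|_{\da x}$ for each $x$ and compare it with $\sup\{f|_{\da x}, -f|_{\da x}\}$, which exists in the totally ordered space $\R^{\da x}_{\lex}$. Fix $x$. Since $\da x$ is a chain and $m(f)$ is an antichain, $m(f) \cap \da x$ has at most one element; and if $\supp(f) \cap \da x \neq \emptyset$, its least element $s_x$ lies in $m(f)$. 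If $\supp(f) \cap \da x = \emptyset$, then $f$, $g$, and $h$ all restrict to $0$ on $\da x$ and there is nothing to prove. If $s_x \in m_+(f)$, then every $y \in \supp(f) \cap \da x$ satisfies $y \geq s_x$, hence lies in $\ua m_+(f)$ and not in $\ua m_-(f)$; therefore $g|_{\da x} = f|_{\da x}$ and $h|_{\da x} = 0$, so $(g+h)|_{\da x} = f|_{\da x} \geq 0$ in $\R^{\da x}_{\lex}$ (as $f(s_x) > 0$), which is exactly $\sup\{f|_{\da x}, -f|_{\da x}\}$. The case $s_x \in m_-(f)$ is symmetric and yields $(g+h)|_{\da x} = h|_{\da x} = -f|_{\da x} = \sup\{f|_{\da x}, -f|_{\da x}\}$.

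Finally, I would assemble the pieces. Since $(g+h)|_{\da x} \geq f|_{\da x}$ and $(g+h)|_{\da x} \geq -f|_{\da x}$ in $\R^{\da x}_{\lex}$ for every $x$, the second preliminary observation gives $g + h \geq f$ and $g + h \geq -f$ in $\R^X_{\lex}$. If $k \in \R^X_{\lex}$ is any upper bound of $\{f, -f\}$, then $k|_{\da x} \geq \sup\{f|_{\da x}, -f|_{\da x}\} = (g+h)|_{\da x}$ in the totally ordered space $\R^{\da x}_{\lex}$ for every $x$, so $k \geq g+h$, again by the second observation. Hence $\sup\{f, -f\}$ exists and equals $g + h = f \cdot \mathbf{1}_{\ua m_+(f)} - f \cdot \mathbf{1}_{\ua m_-(f)}$. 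Since $\sup\{f, -f\}$ exists for every $f \in \R^X_{\lex}$, the space is a vector lattice: for arbitrary $a, b \in \R^X_{\lex}$ the element $\tfrac{1}{2}\bigl(a + b + |a-b|\bigr)$ is readily checked to be $\sup\{a, b\}$. I expect the main obstacle to be the bookkeeping in the middle paragraph, specifically verifying that on each chain $\da x$ the two pieces $g$ and $h$ cannot both be active — which is exactly where the antichain property of $m(f)$ and the disjointness of $\ua m_+(f)$ and $\ua m_-(f)$ are used.
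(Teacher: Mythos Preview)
Your proof is correct and takes a genuinely different route from the paper's. The paper argues directly with the cone description from \Cref{L:well-founded cone}: it sets $h := f \cdot \mathbf{1}_{\ua m_+(f)}$, checks that $h \geq 0$ and $h \geq f$ by identifying $m(h)$ and $m(h-f)$, and then shows $h$ is the least upper bound of $\{f,0\}$ by taking an arbitrary upper bound $g$ and verifying the condition ``$g(x) < h(x) \Rightarrow \exists y < x$ with $g(y) > h(y)$'' via a short case split on whether $x \in \ua m(h)$. The absolute value then drops out from $|f| = (2f \vee 0) - f$.

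Your approach instead isolates a structural lemma --- positivity in $\R^X_{\lex}$ is detected by positivity of all restrictions $f|_{\da x}$ in the totally ordered spaces $\R^{\da x}_{\lex}$ --- and uses it to reduce the supremum computation to chains, where it is trivial. This is a clean and natural exploitation of the forest hypothesis, and the restriction lemma is a nice byproduct that makes the role of the forest condition very transparent. The paper's argument is shorter and more self-contained (no auxiliary lemma, just a direct verification against the cone), and it yields $f^+$ explicitly before $|f|$, which is marginally more informative. Your argument, on the other hand, makes it clearer \emph{why} the forest condition is exactly what is needed: it is precisely the condition that turns each $\da x$ into a chain, where lexicographic order is total.
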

\begin{proof}
Let $f \in \R^X_{\lex}$ and define $h := f \cdot \mathbf{1}_{\ua m_+(f)}$; to show that $\R^X_{\lex}$ is a vector lattice, it suffices to show that $h = f \vee 0$. We first claim that $h \geq f$ and $h \geq 0$. Indeed, $m(h) = m_+(f)$, and $h(x) = f(x) > 0$ for $x \in m(h)$, showing that $h \geq 0$. Note that $h-f = -f \cdot \mathbf{1}_{\ua m_-(f)}$, and so $m(h-f) = m_-(f)$, and $h(x) = -f(x) > 0$ for $x \in m(h-f)$, hence $h-f \geq 0$. It remains to show that $h$ is the least upper bound of $f$ and $0$.

Let $g \in \R^X_{\lex}$ be such that $g \geq f$ and $g \geq 0$. We have to show that $g \geq h$, which by \Cref{L:well-founded cone} is equivalent to
\[
\mbox{for all }x \in X \mbox{ with }g(x) < h(x) \mbox{, there is an }y < x \mbox{ with }g(y) > h(y).
\]
So let $x \in X$ be such that $g(x) < h(x)$. First assume that $x \notin \ua m(h)$. Then $h$ vanishes on $\da x$ and so $g(x) < h(x) = 0$. Since $g \geq 0$, there is an $y < x$ with $g(y) > 0 = h(y)$.

Finally, if $x \in \ua m(h) = \ua m_+(f)$, then $g(x) < h(x) = f(x)$, and since $g \geq f$, there is an $y < x$ with $g(y) > f(y)$. Since $x \notin \ua m_-(f)$, also $y \notin \ua m_-(f)$, and so $f(y) = h(y)$. Hence $g(y) > f(y) = h(y)$.

The computation
\[
|f| = f \vee -f = (2f \vee 0) - f = 2f \cdot \mathbf{1}_{\ua m_+(f)} - f \cdot \mathbf{1}_{\ua m(f)} = f \cdot \mathbf{1}_{\ua m_+(f)} - f \cdot \mathbf{1}_{\ua m_-(f)}
\]
shows the final statement.
\end{proof}

From now on, $X$ will be a forest. Let $E$ be a subspace of $\R^X_{\lex}$ with the property that if $f \in E$ and $g \in \R^X_{\lex}$ with $\supp(g) = \supp(f)$, then $g \in E$. The above theorem now shows that $E$ is a sublattice of $\R^X_{\lex}$. In particular, we define the sublattice of finitely supported functions on $X$, or equivalently, the subspace spanned by $\{e_x \colon x \in X\}$, by $\Lex(X)$. We will now turn our attention to the more general situation of sublattices $E$ of $\R^X_{\lex}$ containing $\Lex(X)$.

\begin{example}\label{E:Arch}
Let $X$ be a countable antichain. Then $\R^X_{\lex}$ is isomorphic to the space of all real sequences $s$ in the usual Archimedean coordinatewise ordering, $\Lex(X) \cong c_{00}$, and $E$ is isomorphic to a sublattice of $s$ containing $c_{00}$. If $X$ is a finite antichain, then $\R^X_{\lex} = \Lex(X) \cong \R^n$ in the Archimedean coordinatewise ordering, and if $X$ is finite and totally ordered, then $\R^X_{\lex} = \Lex(X) \cong \R^n_{\lex}$.
\end{example}

Let $U \subseteq X$ be an upper set and let $f \in \R^X_{\lex}$ be supported on $U$. By \Cref{T:forest_implies_lattice} $|f|$ is also supported on $U$. Furthermore, suppose that $g \in \R^X_{\lex}$ is such that $0 \leq g \leq f$. If $x \in m(g)$ and $x \notin \ua m(f)$, then $(f-g)(x) = -g(x) < 0$, so by \Cref{L:well-founded cone} there is a $y < x$ with $f(y) = (f-g)(y) > 0$, contradicting that $x \in \ua y \subseteq \ua m(f)$. Hence $m(g) \subseteq \ua m(f)$ and so $g$ is also supported on $U$. We conclude that for each upper set $U$, the set $\R^U_{\lex}$ of functions supported on $U$ is an ideal of $\R^X_{\lex}$.  

Let $E$ be a sublattice of $\R^X_{\lex}$ containing $\Lex(X)$. For each upper set $U$, we define $E(U) := \R^U_{\lex} \cap E$. Since the intersection of a sublattice with an ideal is an ideal in the sublattice, it follows that $E(U)$ is an ideal of $E$. We now investigate whether every ideal in $E$ is of this form, so let $I \subseteq E$ be an ideal. Let $\car(I) \subseteq X$, the \emph{carrier} of $I$, be the union of the supports of $f \in I$. Suppose $x \in \supp(f)$ for some $f \in I$ and $y \geq x$, then $0 \leq e_y \leq e_x \in I_f$ so $e_y \in I_f$. Hence $\car(I)$ is an upper set. The operations $I \mapsto \car(I)$ and $U \mapsto E(U)$ are in general not inverses, because different ideals can have the same carrier. Indeed, consider \Cref{E:Arch}. Then $X$ itself is an upper set and any ideal of $s$ containing $c_{00}$ has $X$ as its carrier. This explains the additional condition in the next theorem. 
\begin{theorem}\label{t:ideals_upper_sets}
Let $X$ be a forest and let $E$ be a sublattice of $\R^X_{\lex}$ containing $\Lex(X)$ such that $m(f)$ is finite for all $f \in E$. The map sending an upper set $U$ of $X$ to the set $E(U)$ of functions in $E$ supported on $U$ is a lattice isomorphism between the upper sets $\U(X)$ of $X$ and the lattice of ideals of $E$, both equipped with the inclusion ordering. Under this isomorphism, principal ideals correspond to upper sets $\ua F$ for some finite antichain $F \subseteq X$, and local ideals correspond to upper sets $\ua x$ for $x \in X$.
\end{theorem}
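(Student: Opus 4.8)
The plan is to show that $U \mapsto E(U)$ is an order isomorphism by exhibiting an inverse, and the natural candidate is $I \mapsto \car(I)$. First I would verify that both maps are order-preserving, which is immediate: if $U_1 \subseteq U_2$ then clearly $E(U_1) \subseteq E(U_2)$, and if $I_1 \subseteq I_2$ then $\car(I_1) \subseteq \car(I_2)$. Since an order-preserving bijection between posets that has an order-preserving inverse is automatically a lattice isomorphism, it then suffices to prove the two composites are the identity. One direction, $\car(E(U)) = U$, is easy and uses only the hypothesis $\Lex(X) \subseteq E$: for $x \in U$ we have $e_x \in \Lex(X) \cap \R^U_{\lex} = E(U)$, so $x \in \car(E(U))$; conversely every $f \in E(U)$ is supported on $U$, so $\car(E(U)) \subseteq U$. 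The substantive direction is $E(\car(I)) = I$ for every ideal $I$ of $E$, and this is where the finiteness hypothesis on $m(f)$ is essential.

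For $E(\car(I)) = I$: the inclusion $I \subseteq E(\car(I))$ is trivial since every $f \in I$ is supported on $\car(I)$ by definition of the carrier. For the reverse inclusion, take $f \in E$ with $\supp(f) \subseteq \car(I)$; I want to conclude $f \in I$. Since $I$ is an ideal it suffices to find $g \in I$ with $|f| \le |g|$, and since $m(f)$ is finite, write $m(f) = \{x_1, \dots, x_k\}$. Each $x_j$ lies in $\car(I)$, so there is $g_j \in I$ with $x_j \in \supp(g_j)$; replacing $g_j$ by $|g_j|$ we may assume $g_j \ge 0$, and then $x_j \in \supp(g_j)$ with $g_j(x_j) > 0$ forces (after possibly multiplying by a positive scalar and using that $\ua x_j \subseteq \car(I)$) that $g_j$ dominates a positive multiple of $e_{x_j}$, hence dominates $e_{x_j}$ itself up to scaling. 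The key point is then that $g := g_1 + \cdots + g_k \in I$ is a positive element whose support contains $m(f)$ — in fact contains a neighbourhood $\ua m(f) \cap (\text{something})$ — and one checks that some scalar multiple $\lambda g$ satisfies $|f| \le \lambda g$ using \Cref{L:well-founded cone}: for any $x$ with $|f|(x) > \lambda g(x)$ one must have $x \notin \ua m(g) \supseteq \ua m(f) \supseteq \supp(f)$, contradicting $x \in \supp(|f|) = \supp(f)$. So $|f| \le \lambda g$ with $\lambda g \in I$, giving $f \in I$.

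For the final two sentences of the statement, I would argue as follows. An ideal $I$ is principal, $I = I_f$, iff $I$ is generated by a single positive element $f$; by the argument above $I_f = E(\ua m(f))$ since $f$ dominates (scalar multiples of) the $e_x$ for $x \in m(f)$ and is dominated by a scalar multiple of $\sum_{x \in m(f)} e_x$ — so principal ideals correspond exactly to carriers of the form $\ua F$ with $F = m(f)$ a finite antichain, and conversely $E(\ua F) = I_{\sum_{x \in F} e_x}$ is principal. For local ideals: under the isomorphism, $E(U)$ is local iff $U$ has a largest proper sub-upper-set, equivalently $U$ has a unique minimal element and removing it is still an upper set — which for a forest means $U = \ua x$ for a single $x$; here I would use \Cref{P:local_ideal_in_summand} or directly that $E(\ua x) \cong \R \circ E(\sua x)$ exhibits $E(\ua x)$ as local with maximal ideal $E(\sua x)$ (noting $\sua x$ is an upper set since $\da x$ is totally ordered), while if $U$ has two distinct minimal elements $x, y$ then $E(U \setminus \{x\})$ and $E(U \setminus \{y\})$, wait — one must instead observe $E(\ua x)$ and $E(U \setminus \ua x \cup \cdots)$; more cleanly, if $|\min U| \ge 2$ pick minimal $x \ne y$ and note $E(U \setminus \ua x)$ and $E(\ua x \cup \ua y \setminus \{x\})$ give incomparable proper ideals, contradicting locality.

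I expect the main obstacle to be the domination step in $E(\car(I)) = I$ — namely, carefully turning "$f$ is supported on $\car(I)$'' together with the finiteness of $m(f)$ into an explicit $g \in I$ with $|f| \le \lambda g$, handling the bookkeeping of passing from an arbitrary $g_j \in I$ with $x_j \in \supp g_j$ to control of $e_{x_j}$, and then verifying $|f| \le \lambda g$ via the pointwise characterization of the cone in \Cref{L:well-founded cone}. The identification of local ideals with the $\ua x$ is the second delicate point, where one must rule out upper sets with multiple minimal elements.
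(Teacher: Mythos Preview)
Your overall architecture matches the paper's exactly: show $U \mapsto E(U)$ and $I \mapsto \car(I)$ are mutually inverse order-preserving maps, then identify principal and local ideals. The easy direction $\car(E(U)) = U$ and the inclusion $I \subseteq E(\car(I))$ are fine.

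The domination step, which you correctly flag as the main obstacle, is flawed as written. Your claim ``for any $x$ with $|f|(x) > \lambda g(x)$ one must have $x \notin \ua m(g)$'' is false: take $X = \{1 < 2\}$, $g = e_1$, $f = e_2$; then $2 \in \ua m(g)$ but $\lambda g(2) = 0 < |f|(2)$ for every $\lambda$. You are conflating pointwise domination with lexicographic domination. The paper avoids this by a cleaner choice of dominating element: rather than summing arbitrary witnesses $g_j$, it first observes that $e_x \in I$ for each $x \in m(f)$ (since $e_x$ lies in the principal ideal generated by any $g \in I$ with $x \in \supp(g)$), and then uses $\mathbf{1}_{m(f)} \in I$. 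Because $m(\mathbf{1}_{m(f)}) = m(f)$ exactly, choosing $n$ large enough that $n > |f(x)|$ for each $x \in m(f)$ gives $m(n\mathbf{1}_{m(f)} - f) = m(f)$ with positive values there, hence $|f| \leq n\mathbf{1}_{m(f)}$. This is precisely \Cref{L:principal_lex_ideals}, which you should invoke rather than re-deriving. Your version with $g = \sum g_j$ can be salvaged, but you must argue via $m(\lambda g - |f|)$ and the choice of $\lambda$ on the finite set $m(g)$, not via a pointwise inequality.

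For local ideals, your first instinct was right and you should not have second-guessed it. If $U = \ua F$ with $F$ a finite antichain containing distinct $x,y$, then $U \setminus \{x\}$ and $U \setminus \{y\}$ are upper sets (removing a minimal element preserves being an upper set), and $E(U \setminus \{x\})$, $E(U \setminus \{y\})$ are distinct codimension-one (hence maximal) ideals of $E(U)$, contradicting locality. The paper does exactly this.
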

We need an auxiliary lemma to prove this theorem.
\begin{lemma}\label{L:principal_lex_ideals}
Let $X$ be a forest, and let $f \in \R^X_{\lex +}$ be such that $m(f)$ is finite. Then $I_f = \R^{\ua m(f)}_{\lex}$.
\end{lemma}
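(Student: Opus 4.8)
The plan is to prove the two inclusions $I_f \subseteq \R^{\ua m(f)}_{\lex}$ and $\R^{\ua m(f)}_{\lex} \subseteq I_f$ separately. For the first, observe that $m(f) = \min(\supp(f))$ and $\supp(f)$ is well-founded, so by \Cref{L:well-founded equivalences} every element of $\supp(f)$ dominates a minimal one; hence $\supp(f) \subseteq \ua m(f)$, i.e.\ $f \in \R^{\ua m(f)}_{\lex}$. As $\ua m(f)$ is an upper set, $\R^{\ua m(f)}_{\lex}$ is an ideal of $\R^X_{\lex}$ (as noted just before \Cref{t:ideals_upper_sets}), so it contains $I_f$.

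For the reverse inclusion I would use the standard description of a principal ideal, $I_f = \{\, h \in \R^X_{\lex} : |h| \le \lambda |f| \text{ for some } \lambda \ge 0 \,\}$, together with $|f| = f$ (valid since $f \in \R^X_{\lex+}$). Thus, given $g \in \R^{\ua m(f)}_{\lex}$, it suffices to produce $\lambda \ge 0$ with $|g| \le \lambda f$. Write $m(f) = \{s_1, \dots, s_k\}$; if $k = 0$ then $f = 0$, forcing $\ua m(f) = \emptyset$ and $g = 0$, so assume $k \ge 1$. Since $f(s_i) > 0$ for each $i$ and there are only finitely many indices, one can choose a single $\lambda > 0$ with $\lambda f(s_i) > |g(s_i)|$ for all $i = 1, \dots, k$; set $h := \lambda f - |g|$ and note that $\supp(h) \subseteq \supp(f) \cup \supp(g) \subseteq \ua m(f) = \ua s_1 \cup \dots \cup \ua s_k$.

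The crux is to verify $h \in \R^X_{\lex+}$, which I would do through the pointwise characterization of the cone in \Cref{L:well-founded cone}: it is enough to show that for every $x$ with $h(x) < 0$ there is a $y < x$ with $h(y) > 0$. For such an $x$ we have $x \in \supp(h)$, hence $s_i \le x$ for some $i$; since $h(s_i) = \lambda f(s_i) - |g(s_i)| > 0$ we cannot have $x = s_i$, so $s_i < x$ and $y := s_i$ does the job. Consequently $h \ge 0$, i.e.\ $|g| \le \lambda f$, so $g \in I_f$, completing the argument. I expect this last verification to be the only substantive step: the finiteness of $m(f)$ is exactly what is used there, in that one scalar $\lambda$ must dominate $|g|$ at $s_1, \dots, s_k$ simultaneously, while the forest hypothesis enters only indirectly — via \Cref{T:forest_implies_lattice}, which makes $\R^X_{\lex}$ a vector lattice so that $I_f$ is defined, and via $\R^{\ua m(f)}_{\lex}$ being an ideal. (One could equally reduce to the case $g \ge 0$ at the outset by replacing $g$ with $|g|$, since $\supp(|g|) = \supp(g)$ by \Cref{T:forest_implies_lattice} and $g \in I_f$ iff $|g| \in I_f$.)
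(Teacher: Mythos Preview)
Your proof is correct and follows essentially the same strategy as the paper's. Both arguments choose a scalar large enough that $\lambda f$ dominates $|g|$ at the finitely many points of $m(f)$, then use the description of the cone to conclude $\lambda f - |g| \ge 0$; you invoke the pointwise criterion of \Cref{L:well-founded cone}, while the paper observes directly that $m(nf \pm g) = m(f)$. For the inclusion $I_f \subseteq \R^{\ua m(f)}_{\lex}$ you appeal to the already-established fact that $\R^{\ua m(f)}_{\lex}$ is an ideal, whereas the paper argues by contrapositive; your route is slightly cleaner here. One small point: when you write $h(s_i) = \lambda f(s_i) - |g(s_i)|$ you are tacitly using that the lattice absolute value satisfies $|g|(s_i) = |g(s_i)|$, which holds because each $s_i$ lying in $\supp(g)$ is automatically minimal there (as $\supp(g) \subseteq \ua m(f)$ and $m(f)$ is an antichain)---or, as you note, one may simply reduce to $g \ge 0$ at the outset.
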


\begin{proof}
Note that $I_f = \{g \in \R^X_{\lex} \colon \exists n \in \N \quad |g| \leq nf \} = \{g \in \R^X_{\lex} \colon \exists n \in \N \quad nf \pm g \geq 0\}$. Now suppose $g \in \R^{\ua m(f)}_{\lex}$. Pick $n \in \N$ with $nf(x) > |g(x)|$ (so that $nf(x) \pm g(x) > 0$) for all $x \in m(f)$, then $m(nf \pm g) = m(f)$ and so $nf \pm g > 0$ hence $g \in I_f$.

Conversely, if $g \notin \R^{\uparrow m(f)}_{\lex}$, then there is an $x \in m(g)$ with $x \notin \ua m(f)$, hence it is not possible that there is some $n \in \N$ with $nf \pm g \geq 0$, implying that $g \notin I_f$.
\end{proof}

\begin{proof}[Proof of \Cref{t:ideals_upper_sets}]
Let $U$ be an upper set. Since $e_x \in \Lex(U) \subseteq E$ for all $x \in U$, it follows that $U \subseteq \car(E(U))$. Conversely, if $x \in \car(E(U))$, then there is an $f \in E(U)$ with $f(x) \not= 0$ and so $x \in U$, so $U = \car(E(U))$. 

Let $I \subseteq E$ be an ideal and let $f \in I$. Then $x \in \car(I)$ for all $x \in \supp(f)$ and so $f \in E(\car(I))$, hence $I \subseteq E(\car(I))$. Conversely, let $f \in E(\car(I))$. Then $\supp(f) \subseteq \car(I)$, and for $x \in m(f) \subseteq \supp(f) \subseteq \car(I)$, the function $e_x$ is in the ideal generated by some $g \in I$ with $x \in \supp(g)$ and so $e_x \in I$. Since $m(f)$ is finite, $\mathbf{1}_{m(f)}$ is a linear combination of $\{e_x \colon x \in m(f)\}$ and so $\mathbf{1}_{m(f)} \in I$. For large enough $n$, the function $n\mathbf{1}_{m(f)} - f$ is strictly positive on $m(f)$ and hence a positive element of $E$, showing that $f \in I$. Hence $I = E(\car(I))$, and so the maps $U \mapsto E(U)$ and $I \mapsto \car(I)$ are mutually inverse bijections. They are clearly order preserving and so they are order (and hence lattice) isomorphisms.

To show the correspondence for principal ideals, note that if $f \in E$, then $m(f)$ is a finite antichain and so $I_f = E(\ua m(f))$ by \Cref{L:principal_lex_ideals}, and conversely, if $F$ is a finite antichain of $X$, then $E(\ua F)$ is the principal ideal generated by $\mathbf{1}_F$ (again by \Cref{L:principal_lex_ideals}). Concerning the local ideals, if $x \in X$, then $E(\ua x)$ is local with unique maximal ideal $E(\sua x)$. Conversely, let $L$ be a local ideal. Then it is principal and so corresponds to a finite antichain $F$; it remains to show that $F$ contains exactly one element. If $F = \emptyset$ then $E(F) = \{0\}$ which is not local, and if $F$ contains at least 2 elements $x$ and $y$, then $E(\ua F \setminus \{x\})$ and $E(\ua F \setminus \{y\})$ are different ideals of codimension one, hence maximal, in $E(\ua F)$.
\end{proof}

\section{Representing Artinian vector lattices}

By \Cref{t:ideals_upper_sets}, to show that $\Lex(X)$ is Artinian, it suffices to show that $\U(X)$ is Artinian, i.e., every decreasing sequence in $\U(X)$ is stationary. For that we need some extra conditions on $X$.
\begin{definition}
Let $X$ be a partially ordered set. Then $X$ is defined to have \emph{finite width} if every antichain in $X$ is finite. 
\end{definition}
A (reverse) well-ordered set is a totally ordered set that is (reverse) well-founded. 
\begin{lemma}\label{l:upper_sets_well_ordered}
Let $T$ be a reverse well-ordered set. Then $\U(T)$, the upper sets of $T$ partially ordered by inclusion, is well-ordered.
\end{lemma}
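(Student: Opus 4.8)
The plan is to verify directly the two defining properties of a well-ordered set for $\U(T)$: that it is totally ordered by inclusion, and that it is well-founded.

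First I would check that $\U(T)$ is totally ordered by $\subseteq$. Suppose $U$ and $V$ are upper sets that are incomparable, and pick $u \in U \setminus V$ and $v \in V \setminus U$. Since $T$ is totally ordered we may assume $u \le v$ (the case $v \le u$ is symmetric). Then $v \in \ua u \subseteq \ua U = U$, since $U$ is an upper set containing $u$; this contradicts $v \notin U$. Hence any two upper sets are comparable. This step is routine and uses only that $T$ is a chain.

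The substantive part is showing that $\U(T)$ is well-founded, and here I would invoke the characterization in \Cref{L:well-founded equivalences} that a poset is well-founded precisely when it has no strictly decreasing sequence. So suppose towards a contradiction that $U_1 \supsetneq U_2 \supsetneq \cdots$ is a strictly decreasing sequence in $\U(T)$. For each $n$ choose $t_n \in U_n \setminus U_{n+1}$, which is possible because the inclusion is strict. The key observation is that $(t_n)_{n\in\N}$ is strictly increasing in $T$: since $t_n \notin U_{n+1}$ and $U_{n+1}$ is an upper set, every element of $T$ that is $\le t_n$ also lies outside $U_{n+1}$; as $t_{n+1} \in U_{n+1}$, this forces $t_{n+1} \not\le t_n$, and since $t_n \ne t_{n+1}$ and $T$ is totally ordered we conclude $t_n < t_{n+1}$. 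Thus $(t_n)$ is a strictly increasing sequence in $T$. Applying \Cref{L:well-founded equivalences} to the reversed order on $T$, the hypothesis that $T$ is reverse well-founded means $T$ has no strictly increasing sequence, a contradiction. Hence $\U(T)$ admits no strictly decreasing sequence and is therefore well-founded.

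Combining the two steps, $\U(T)$ is a totally ordered and well-founded poset, i.e., well-ordered, as claimed. I do not anticipate a genuine obstacle; the only point requiring care is the bookkeeping in the monotonicity argument — using the upper-set property of $U_{n+1}$ in the correct direction (no element below $t_n$ belongs to $U_{n+1}$) and then invoking totality of $T$ to upgrade $t_{n+1} \not\le t_n$ to $t_n < t_{n+1}$. One could alternatively prove well-foundedness by showing that every nonempty family $\mathcal{A} \subseteq \U(T)$ has a least element, namely $\bigcap \mathcal{A}$, but the descending-sequence formulation fits most cleanly with \Cref{L:well-founded equivalences}.
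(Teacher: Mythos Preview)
Your proof is correct and follows essentially the same route as the paper: both establish that $\U(T)$ is a chain, assume a strictly decreasing sequence $(U_n)$, pick $t_n \in U_n \setminus U_{n+1}$, and use the upper-set property of $U_{n+1}$ together with the total order on $T$ to deduce $t_n < t_{n+1}$, contradicting reverse well-foundedness via \Cref{L:well-founded equivalences}. The only cosmetic difference is that the paper sets $t_n := \max(U_n \setminus U_{n+1})$ whereas you pick $t_n$ arbitrarily; since the maximality is never actually used in the argument, your choice is if anything slightly cleaner.
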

\begin{proof}
It is not difficult to prove that the upper sets of a totally ordered set is totally ordered. Arguing by contradiction, using \Cref{L:well-founded equivalences}$(iv)$, let $(U_n)_{n\in\N}$ be a strictly decreasing sequence of upper sets, hence $U_n \setminus U_{n+1} \not= \emptyset$. Define $t_n := \max (U_n \setminus U_{n+1})$, then $t_n \notin U_{n+1}$ but $t_{n+1} \in U_{n+1}$. If $t_n \geq t_{n+1}$ then since $U_{n+1}$ is an upper set, $t_n \in U_{n+1}$ which is a contradiction. Hence $t_n < t_{n+1}$ showing that $(t_n)_{n \in \N}$ is a strictly increasing sequence in $T$, contradicting the fact that $T$ is reverse well-ordered.
\end{proof}
\begin{theorem}\label{t:lex_artinian_characterisation}
Let $X$ be a forest, so that $\Lex(X)$ is a vector lattice. Then $\Lex(X)$ is Artinian if and only if $X$ is reverse well-founded with finite width.
\end{theorem}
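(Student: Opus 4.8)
The plan is to reduce the whole statement to a purely order-theoretic property of the poset $\U(X)$ of upper sets of $X$. First I would apply \Cref{t:ideals_upper_sets} with $E = \Lex(X)$: every $f \in \Lex(X)$ has finite support, so $m(f)$ is finite and the hypotheses are met. Hence the lattice of ideals of $\Lex(X)$, ordered by inclusion, is order-isomorphic to $\U(X)$, also ordered by inclusion. By the equivalence $(i)\Leftrightarrow(iii)$ in \Cref{L:well-founded equivalences}, a poset satisfies the descending chain condition precisely when it is well-founded; therefore $\Lex(X)$ is Artinian if and only if $\U(X)$ is well-founded. The theorem thus becomes: $\U(X)$ is well-founded if and only if $X$ is reverse well-founded with finite width.

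For the ``only if'' direction I would argue by contraposition. If $X$ contains a strictly increasing sequence $x_1 < x_2 < \cdots$, then $\ua x_1 \supseteq \ua x_2 \supseteq \cdots$, and this chain is strictly decreasing since $x_n \in \ua x_n$ while $x_n \notin \ua x_{n+1}$ (because $x_n \not\geq x_{n+1}$); this is a strictly decreasing sequence in $\U(X)$. If instead $X$ contains an infinite antichain $\{y_1, y_2, \dots\}$, then the upper sets $U_n := \ua\{y_k : k \geq n\}$ form a strictly decreasing sequence, because $y_n \in U_n$ but $y_n \notin U_{n+1}$ (no $y_k$ with $k > n$ lies below $y_n$). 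Either way $\U(X)$ fails to be well-founded.

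For the ``if'' direction, suppose $X$ is reverse well-founded with finite width $w$. By the infinite version of Dilworth's theorem, $X$ is a union of finitely many chains $C_1, \dots, C_n$ (with $n \leq w$). Each $C_i$, being a totally ordered subset of a reverse well-founded poset, is reverse well-ordered, so by \Cref{l:upper_sets_well_ordered} the poset $\U(C_i)$ is well-ordered, in particular well-founded. Given a decreasing sequence $(U_k)_k$ in $\U(X)$, each restriction $U_k \cap C_i$ is an upper set of $C_i$ and $(U_k \cap C_i)_k$ is decreasing in $\U(C_i)$, hence stationary, say for $k \geq N_i$; putting $N := \max_i N_i$, for $k \geq N$ we get $U_k = \bigcup_i (U_k \cap C_i) = \bigcup_i (U_N \cap C_i) = U_N$ since the $C_i$ cover $X$. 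Thus $(U_k)_k$ is stationary and $\U(X)$ is well-founded.

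The main obstacle is the ``if'' direction, specifically the passage from finite width to a partition into finitely many chains, i.e.\ the infinite Dilworth theorem, which should either be cited or given a short compactness/Zorn argument. One can also sidestep Dilworth using the infinite Ramsey theorem: pick witnesses $x_n \in U_n \setminus U_{n+1}$ for a hypothetical strictly decreasing chain in $\U(X)$; for $n < m$ one has $x_m \not\leq x_n$ (since $x_m \in U_{n+1}$ but $x_n \notin U_{n+1}$, and $U_{n+1}$ is an upper set), so the pair $\{n,m\}$ can be colored by whether $x_n < x_m$ or $x_n \parallel x_m$, and an infinite monochromatic set yields either a strictly increasing sequence or an infinite antichain, contradicting reverse well-foundedness or finite width respectively. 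Everything else is routine once the reduction to $\U(X)$ is in place.
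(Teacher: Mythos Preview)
Your proposal is correct, and the ``only if'' direction matches the paper verbatim. The ``if'' direction, however, takes a genuinely different route. The paper never invokes Dilworth or Ramsey: it uses the forest hypothesis directly. Since $X$ is reverse well-founded, the set $F$ of maximal elements is nonempty and (being an antichain) finite; since $X$ is a forest, each $\da\mu$ for $\mu\in F$ is already a chain, and $X=\bigcup_{\mu\in F}\da\mu$. So the finite chain cover you extract from Dilworth is available for free from the hypotheses, and the paper then argues exactly as you do, intersecting a decreasing sequence $(U_n)$ with each $\da\mu$ and applying \Cref{l:upper_sets_well_ordered}.

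One point worth tightening in your Dilworth route: the paper defines ``finite width'' as \emph{every antichain is finite}, not as the existence of a uniform bound $w$, and the infinite Dilworth theorem needs the latter. In general these are not equivalent, so your sentence ``finite width $w$'' smuggles in an extra assumption. In the present setting the gap closes immediately (the maximal elements give the bound), but noticing this essentially recovers the paper's decomposition and makes the appeal to Dilworth redundant. Your Ramsey alternative, by contrast, is clean and does not need a uniform bound; it is a nice self-contained argument that, unlike the paper's, does not use the forest structure at all in the ``if'' direction. The trade-off is that it imports an external combinatorial theorem where the paper gets by with nothing beyond \Cref{l:upper_sets_well_ordered}.
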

\begin{proof}
Suppose $X$ is reverse well-founded with finite width. By \Cref{t:ideals_upper_sets}, $\Lex(X)$ is Artinian if and only if every decreasing sequence of upper sets is stationary. So let $(U_n)_{n\in\N}$ be a decreasing sequence in $\U(X)$. The set $F$ of maximal elements of $X$ is an antichain, hence finite, and by reverse well-foundedness, every elements of $X$ is below some maximal element $\mu \in F$. For each such $\mu$, consider the sequence $(U_n \cap \da \mu)_{n \in \N}$. Since $\da \mu$ is totally ordered and reverse well-founded, it is reverse well-ordered. \Cref{l:upper_sets_well_ordered} implies that the upper sets of a reverse well-ordered set are well-ordered. Since $(U_n \cap \da \mu)_{n \in \N}$ is a decreasing sequence in the well-ordered set $\U(\da \mu)$, it is stationary, so there is an $n_\mu \in \N$ such that $U_k \cap \da \mu = U_{n_\mu} \cap \da \mu$ for all $k > n_\mu$. Now consider $k > \max\{n_\mu \colon \mu \in F\}$. It follows that
$$
U_k = U_k \cap X = U_k \cap (\bigcup_{\mu \in F} \da \mu) = \bigcup_{\mu \in F} (U_k \cap \da \mu) = \bigcup_{\mu \in F} (U_{n_\mu} \cap \da \mu)
$$
which is independent of $k$, and so $(U_n)_{n\in\N}$ is stationary which shows that $\U(X)$ is Artinian.

Conversely, suppose $X$ is not reverse well-founded. Then there exists a strictly increasing sequence such that $x_1 < x_2 < \ldots$, generating a strictly decreasing sequence $\ua x_1, \ua x_2, \ldots$ of upper sets, showing that $\Lex(X)$ is not Artinian. Finally, suppose that $X$ does not have finite width. Let $\{x_1, x_2, \ldots\}$ be an infinite antichain. Then $U_n := \bigcup_{k = n}^
\infty \ua x_k$ is a strictly decreasing sequence of upper sets, showing that $\Lex(X)$ is not Artinian.
\end{proof}

Let $E$ be an Artinian vector lattice. Our goal is to find a poset $X$ such that $E \cong \Lex(X)$. To find $X$, note that if we already know that $E \cong \Lex(X)$, then \Cref{t:ideals_upper_sets} shows that there is a bijection between $X$ and the set of local ideals of $E$; this bijection is order reversing, since if $x \leq y$, then $L_y \subseteq L_x$ (since smaller elements generate larger upper sets). So given $E$, we let $X$ be the poset of local ideals of $E$ ordered by reverse inclusion. Then $X$ is a forest by \Cref{L:local_ideals_comparable_or_disjoint}. 
\begin{theorem}\label{t:Artinian=Lex(P)}
Let $E$ be an Artinian vector lattice. Then there exists a reverse well-founded forest with finite width such that $E \cong \Lex(X)$.
\end{theorem}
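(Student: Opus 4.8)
The plan is to take $X$ to be the poset of local ideals of $E$ ordered by reverse inclusion, as suggested in the paragraph preceding the statement, and to build the isomorphism $E \cong \Lex(X)$ by well-founded recursion on $X$. First I would verify the structural properties of $X$ that we will need. By \Cref{L:local_ideals_comparable_or_disjoint}, $X$ is a forest. Since $E$ is Artinian, every strong-unit-carrying ideal has a radical of finite codimension by \Cref{finite-dimensional radical}$(ii)$ and decomposes into finitely many local ideals by \Cref{T:finite_sum_local_ideals}; combined with \Cref{P:local_ideal_in_summand} this should force every antichain of local ideals to be finite, so $X$ has finite width, and it should also give reverse well-foundedness: a strictly increasing chain $x_1 < x_2 < \cdots$ in $X$ is a strictly decreasing chain $L_{x_1} \supsetneq L_{x_2} \supsetneq \cdots$ of local ideals, contradicting the Artinian property. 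The more delicate point is that $X$ must be \emph{rich enough}: I would need that for every nonzero ideal, and in particular for $E$ restricted below any point, there are "enough" local ideals — concretely, that the minimal nonzero ideals of $E$ (which exist by \Cref{P:artinian_finite_minimal_ideals}) are exactly the maximal elements of $X$, and more generally that every local ideal either is minimal or properly contains a smaller local ideal.

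Next I would set up the recursion. For each $x \in X$, write $L_x$ for the corresponding local ideal and $M_x$ for its unique maximal ideal; fix (using the Axiom of Choice, as the authors flag) an isomorphism $L_x \cong \R e_x \circ M_x$ with $e_x$ a chosen positive strong unit of $L_x$. The goal is a coherent family of isomorphisms $\phi_x \colon L_x \to \Lex(\ua x)$ that restrict compatibly: $\phi_x|_{M_x}$ should agree with the "assembly" of the $\phi_y$ for $y$ immediately above $x$ (i.e. the minimal elements of $\sua x$ in $X$). Concretely I expect: $M_x$ is the direct sum of the local ideals $L_y$ over the minimal elements $y$ of $\sua x$ — this is again \Cref{T:finite_sum_local_ideals} applied inside $M_x$ (or rather to $I_x$ for a suitable finite antichain), using that $M_x$ has only finitely many maximal ideals by finite width — and $\Lex(\ua x) = \R e_x \circ \Lex(\sua x)$ with $\Lex(\sua x) = \bigoplus_y \Lex(\ua y)$ over the same finite set of $y$'s. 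So the inductive step is: given isomorphisms $\phi_y$ for all $y$ above $x$, assemble them into $\phi \colon M_x \to \Lex(\sua x)$ and then extend by $e_x \mapsto e_x$ (lexicographically) to $\phi_x \colon L_x \cong \R e_x \circ M_x \to \R e_x \circ \Lex(\sua x) = \Lex(\ua x)$. \Cref{T:well_founded_recursion} (applied to the reverse-well-founded $X$, i.e. with roles of $<$ and $>$ swapped — or equivalently work with $X^{\mathrm{op}}$, which is well-founded) then produces the coherent family. Finally, since the maximal elements of $X$ correspond to the finitely many minimal ideals of $E$ and $E = \bigoplus L_x$ over those maximal $x$'s by \Cref{T:finite_sum_local_ideals}, the top-level assembly of the $\phi_x$ gives $E \cong \bigoplus_x \Lex(\ua x) = \Lex(X)$.

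I would then check that the resulting map is a lattice isomorphism. Linearity and bijectivity are built into the construction; that it is order-preserving in both directions reduces, via the forest structure and \Cref{T:forest_implies_lattice}, to checking it on each "branch" $\da \mu$ for $\mu$ a maximal element of $X$, where everything is totally ordered and the lexicographic-union description makes the comparison transparent. One subtlety worth isolating is well-definedness of the assembly maps: the decomposition of $M_x$ into local summands is unique only up to permutation and the choice of the $\R$-copies, so the recursion must be phrased so that $G$ takes as input the already-constructed $\phi_y$'s (not merely the ideals $L_y$), which is exactly the form of \Cref{T:well_founded_recursion}.

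The main obstacle I anticipate is the "richness" of $X$: ensuring that $\car(E) = X$ in the appropriate sense, i.e. that the local ideals of $E$ genuinely exhaust the structure and that $M_x$ really decomposes as a \emph{finite direct sum} of the local ideals sitting immediately above $x$ with nothing left over. This is where Artinian-ness (via the minimal-ideal result \Cref{P:artinian_finite_minimal_ideals} and \Cref{finite-dimensional radical}) and finite width have to be combined carefully; once that bookkeeping is in place, the recursion itself is routine given \Cref{T:well_founded_recursion} and \Cref{T:finite_sum_local_ideals}.
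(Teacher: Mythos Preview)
There is a genuine gap in both the inductive step and the final assembly. Your recursion assumes that for each local ideal $L_x$, the maximal ideal $M_x$ decomposes as a finite order-direct sum of the local ideals immediately above $x$, and that $E$ itself decomposes over the maximal elements of $X$. Both appeals rest on \Cref{T:finite_sum_local_ideals}, which requires a strong unit---and Artinian (maximal) ideals need not have one. A concrete failure: take $X = -\N \cup \{-\infty\}$ (a reverse well-founded chain) and $E = \Lex(X)$. Then $L_{-\infty} = E$ is local with maximal ideal $M_{-\infty} = \Lex(-\N)$, which has no strong unit; the local ideals inside it are the nested finite-dimensional $\Lex(\{-1,\dots,-n\})$, so no finite sum of them equals $M_{-\infty}$. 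Worse, $\sua(-\infty) = -\N$ has no minimal element, so there are no ``immediately above'' $y$'s to assemble over at all. The top-level assembly fails for the same reason: in $E = \Lex(-\N)$ the unique maximal element of $X$ is $-1$, but $L_{-1}$ is one-dimensional. This is precisely the obstacle you flag at the end, but it is not a bookkeeping issue---the statement you hope for is false.

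The paper sidesteps this by doing well-founded induction on the poset of \emph{all} ideals of $E$ (well-founded because $E$ is Artinian), proving that $\{e_L : L \subseteq I \text{ local}\}$ is a Hamel basis of each ideal $I$. The point is that for any $0 \neq u \in I$, the \emph{principal} ideal $I_u$ always has a strong unit, so \Cref{T:finite_sum_local_ideals} applies there: $I_u = \bigoplus_{k=1}^n L_k$ and $u = \sum_k (\lambda_k e_{L_k} + u_k)$ with $u_k \in M_k \subsetneq I$. The induction hypothesis then expresses each $u_k$ in the desired basis. Applying this with $I = E$ gives the global Hamel basis and hence the linear isomorphism with $\Lex(X)$ in one stroke; the order-isomorphism check is then a short computation on coefficients of maximal local ideals. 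As a side benefit, reverse well-foundedness and finite width of $X$ come for free from \Cref{t:lex_artinian_characterisation} once $E \cong \Lex(X)$ is established, so your preliminary verification of those properties is unnecessary.
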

\begin{proof}
For each local ideal $L$ of $E$, we pick a strong unit $e_L$ of $L$. We first claim that for every ideal $I$ of $E$, the set $\{e_L \colon L \subseteq I \mbox{ is a local ideal} \}$ is a Hamel basis of $I$. We will prove the claim by well-founded induction on the set of all ideals of $E$. So let $I \subseteq E$ be an ideal satisfying the following induction hypothesis: 
\[
\text{For each ideal }J\text{ strictly contained in }I\text{, the set }\{e_L \colon L \subseteq J \mbox{ is a local ideal} \}\text{ is a Hamel basis of }J. 
\]
We have to show that $I$ has the same property. So let $0 \not= u \in I$, then $u \in I_u$; since $I_u$ is Artinian with a strong unit, \Cref{T:finite_sum_local_ideals} and \Cref{finite-dimensional radical} show that $I_u = \oplus_{k=1}^n L_k$, and so $u = \sum_{k=1}^n (\lambda_k e_k + u_k)$ with $u_k \in M_k$. Since $M_k$ is strictly contained in $I$, the induction hypothesis implies that $u_k$ is a finite linear combination of $e_L$'s with $L \subseteq M_k \subseteq I$. It follows that $u$ is a finite linear combination of $e_L$'s with $L \subseteq I$, completing the proof of the claim. We now apply the claim to $I = E$ to conclude that $\{e_L \colon L \mbox{ is a local ideal} \}$ is a Hamel basis of $E$.

Let $\L(E)$ be the set of local ideals of $E$ and let $X := \{ \wh{L} \colon L \in \L(E) \}$ be a copy of $\L(E)$ ordered by reverse inclusion: $\wh{L_1} \geq \wh{L_2}$ if and only if $L_1 \subseteq L_2$. Using the above Hamel basis and identifying each $e_L$ with the corresponding function $e_{\wh{L}}$ on $X$ yields a vector space isomorphism of $E$ with $\Lex(X)$. To show that it is a lattice isomorphism, or equivalently an order isomorphism, note that if $u = \sum_{k=1}^n \lambda_k e_{L_k}$, by \Cref{L:local_ideals_comparable_or_disjoint} the positivity of $u$ is purely determined by the positivity of the coefficients $\lambda_k$ of the maximal local ideals. For its corresponding function $f \in \Lex(X)$, since $X$ is ordered by reverse inclusion, this corresponds to the fact that $f$ is positive on the minimal elements of the support of $f$, which is the definition of positivity in $\Lex(X)$.

\Cref{t:lex_artinian_characterisation} shows that $X$ is reverse well-founded with finite width.
\end{proof}

In conclusion, we have the following characterization of Artinian vector lattices.

\begin{theorem}\label{T:characterisation of Artinian vector lattices}
    A vector lattice $E$ is Artinian if and only if it is isomorphic to $\mathrm{Lex}(X)$ for some reverse well-founded forest $X$ with finite width.
\end{theorem}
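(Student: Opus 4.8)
The plan is simply to assemble the theorem from the two results just established, so I expect no real obstacle. The forward implication — that an Artinian vector lattice is isomorphic to $\Lex(X)$ for some reverse well-founded forest $X$ of finite width — is precisely \Cref{t:Artinian=Lex(P)}, so nothing further is required in that direction.

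For the converse, I would argue as follows. Suppose $E \cong \Lex(X)$ with $X$ a reverse well-founded forest of finite width. Since $X$ is a forest, \Cref{T:forest_implies_lattice} guarantees that $\R^X_{\lex}$, and hence its sublattice $\Lex(X)$, is a vector lattice, so the hypothesis of \Cref{t:lex_artinian_characterisation} is met; that theorem then yields that $\Lex(X)$ is Artinian exactly because $X$ is reverse well-founded with finite width. It then only remains to note that being Artinian is an isomorphism invariant: a lattice isomorphism $\varphi \colon E \to \Lex(X)$ carries ideals of $E$ bijectively onto ideals of $\Lex(X)$ and preserves inclusions, so a decreasing sequence of ideals in $E$ is stationary if and only if its image under $\varphi$ is. Hence $E$ is Artinian.

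In short, the theorem is a corollary of \Cref{t:Artinian=Lex(P)} and \Cref{t:lex_artinian_characterisation}, the only genuinely new (and entirely routine) ingredient being the transfer of the Artinian property along a lattice isomorphism.
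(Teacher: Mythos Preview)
Your proposal is correct and matches the paper's approach exactly: the theorem is stated in the paper simply as a concluding statement with no separate proof, being the immediate combination of \Cref{t:Artinian=Lex(P)} (the forward direction) and \Cref{t:lex_artinian_characterisation} (the converse). Your only addition is the routine observation that the Artinian property is an isomorphism invariant, which the paper leaves implicit.
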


\section{Representing Noetherian vector lattices}

\begin{theorem}\label{t:char_lex_noetherian}
Let $X$ be a forest and let $E$ be a sublattice of $\R^X_{\lex}$ containing $\Lex(X)$. Then $E$ is Noetherian if and only if $X$ is well-founded with finite width.
\end{theorem}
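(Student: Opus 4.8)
The plan is to transfer, via \Cref{t:ideals_upper_sets}, the Noetherian property of $E$ into a condition on the lattice $\U(X)$ of upper sets of $X$, and then settle the combinatorics of $\U(X)$ directly. Note that \Cref{t:lex_artinian_characterisation} is the analogous statement in the Artinian setting, but one cannot simply dualize it: passing to the opposite order on $X$ would in general destroy the forest structure, so the two implications must be handled separately.

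For the ``if'' direction I would assume $X$ is well-founded with finite width. Finite width forces $m(f)$ — which is an antichain — to be finite for every $f \in E$, so \Cref{t:ideals_upper_sets} applies and every ideal of $E$ equals $E(U)$ for some upper set $U$. Using \Cref{L:well-founded equivalences}$(ii)$, well-foundedness gives $U = \ua \min(U)$, and $\min(U)$ is a finite antichain by finite width; hence $E(U) = E(\ua \min(U))$ is principal by \Cref{t:ideals_upper_sets}, and therefore $E$ is Noetherian by \Cref{Noetherian characterization}. (Alternatively, the ascending chain condition on $\U(X)$ can be checked by hand: the union $U$ of an increasing sequence $(U_n)$ satisfies $U = \ua \min(U)$ with $\min(U)$ finite, so some $U_N$ already contains $\min(U)$ and then $U_n = U$ for all $n \geq N$.)

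For the ``only if'' direction I would assume $E$ is Noetherian and first extract finite width: given an infinite antichain $\{x_1, x_2, \dots\}$ in $X$, the ideals $I_n$ generated in $E$ by $\{e_{x_1}, \dots, e_{x_n}\}$ are strictly increasing — each $I_n$ consists of functions supported on $\ua\{x_1, \dots, x_n\}$ by \Cref{L:principal_lex_ideals}, whereas $e_{x_{n+1}} \in I_{n+1}$ is not, since $x_{n+1}$ dominates none of $x_1, \dots, x_n$ — contradicting that $E$ is Noetherian. Once $X$ is known to have finite width, \Cref{t:ideals_upper_sets} becomes available. For well-foundedness, a strictly decreasing sequence $x_1 > x_2 > \cdots$ in $X$ yields the strictly increasing chain of upper sets $\ua x_1 \subsetneq \ua x_2 \subsetneq \cdots$ (the inclusions are strict because $x_n \in \ua x_n \setminus \ua x_{n-1}$), hence by \Cref{t:ideals_upper_sets} a strictly increasing chain of ideals $E(\ua x_n)$, again a contradiction.

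I expect the whole argument to be essentially routine given \Cref{t:ideals_upper_sets}; the only genuine point of care is the order of operations in the ``only if'' direction — finite width must be deduced before \Cref{t:ideals_upper_sets} can be invoked — together with the bookkeeping needed to see that the constructed ideal chains are strictly increasing, which reduces to the observation that $e_x \in E(\ua F)$ if and only if $x$ lies above some element of $F$.
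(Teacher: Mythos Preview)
Your proposal is correct and matches the paper's proof almost exactly: both use \Cref{Noetherian characterization} and \Cref{t:ideals_upper_sets} for the ``if'' direction, and both build strictly increasing ideal chains from an infinite antichain or a strictly decreasing sequence for the ``only if'' direction. The one point on which you are more cautious than necessary is the order of operations in the converse: the paper does not invoke \Cref{t:ideals_upper_sets} at all there, because the strict monotonicity of $U \mapsto E(U)$ follows immediately from $\Lex(X) \subseteq E$ (if $x \in V \setminus U$ then $e_x \in E(V) \setminus E(U)$), so one can treat the well-foundedness and finite-width failures independently without first securing the hypothesis of \Cref{t:ideals_upper_sets}.
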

\begin{proof}
Suppose $X$ is well-founded with finite width. By \Cref{Noetherian characterization} it suffices to show that every ideal is principal. By \Cref{t:ideals_upper_sets}, we have to show that every upper set is of the form $\ua F$ for some finite antichain. So let $U$ be an upper set, and let $F$ be the antichain of minimal elements of $U$. If $U = \emptyset$ then $F = \emptyset$ and $U = \ua F$, and if $U$ is nonempty, then since $X$ is well-founded with finite width, $F$ is finite and $\ua F = U$.

If $X$ is not well-founded, there exists a strictly decreasing sequence such that $x_1 > x_2 > \ldots$, generating the strictly increasing sequence of local ideals $(L_{x_n})_{n\in\N}$ showing that $E$ is not Noetherian. If $X$ does not have finite width, let $\{x_1, x_2, \ldots\}$ be an infinite antichain. Then the sequence of ideals obtained by $I_n := \sum_{k=1}^n L_{x_k}$ is strictly increasing, showing again that $E$ is not Noetherian.
\end{proof}

Let $E$ be a Noetherian vector lattice. As before, we let $\L(E)$ denote the local ideals of $E$, we pick a strong unit $e_L$ for each $L \in \L(E)$, and we consider a copy $X := \{ \wh{L} \colon L \in \L(E) \}$ of the local ideals of $E$ equipped with reverse ordering, so that $X$ is well-founded. To show that $E$ is isomorphic to a sublattice of $\R^X_{\lex}$ containing $\Lex(X)$, it suffices to find an injective lattice homomorphism $T \colon E \to \R^X_{\lex}$ with $Te_L = e_{\wh{L}}$ for every local ideal $L$. For this we require the following definition.

\begin{definition}\label{d:compatible_local_projections}
   Let $E$ be a Noetherian vector lattice. A collection $\{Q_L\}_{\L(E)}$ is called a \emph{compatible set of local projections} if each $Q_L$ is a linear operator from $E$ to $E$ such that 
   \begin{itemize}
       \item $Q_L(E) = L$ and $Q_L^2 = Q_L$;
       \item if $L_1$ and $L_2$ are disjoint, then $Q_{L_1} Q_{L_2} = Q_{L_2} Q_{L_1} = 0$;
       \item if $L_1 \subsetneq L_2$, then $Q_1 Q_2 = Q_2 Q_1 = Q_1$ and $Q_1 e_{L_2} = 0$.
   \end{itemize} 
\end{definition}

The hard part of the characterization of Noetherian vector lattices is showing that a compatible set of local projections exists, which is stated in the following theorem, the proof of which is in \Cref{s:proof_projections}. 
\begin{theorem}\label{t:projections}
Let $E$ be a Noetherian vector lattice. Then a compatible set of local projections exists.
\end{theorem}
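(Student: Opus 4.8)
The goal is to construct, for a Noetherian vector lattice $E$, a family $\{Q_L\}_{L \in \L(E)}$ of idempotent operators with ranges the local ideals, vanishing on disjoint local ideals, and restricting compatibly along chains of local ideals. Since $E$ is Noetherian, every ideal is principal, so $E = I_u$ for some strong unit $u$, and more importantly every principal ideal $I_x$ is a \emph{finite} direct sum of local ideals by \Cref{T:finite_sum_local_ideals} together with \Cref{finite-dimensional radical}. I would build the $Q_L$ by well-founded recursion on $X = \{\wh L : L \in \L(E)\}$, which is well-founded because $E$ is Noetherian (if $X$ had a strictly decreasing sequence, the corresponding local ideals would be strictly increasing). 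Recursion on $X$ means: working from a local ideal $L$ ``downwards'' (i.e.\ outwards to the larger local ideals $L' \supsetneq L$ whose $\wh{L'}$ sit below $\wh L$), assuming $Q_{L'}$ is already defined for all $L' \supsetneq L$, define $Q_L$.

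\textbf{Key steps.} First, I would record the local structure: for a fixed local ideal $L$ with unique maximal ideal $M_L$, choose the strong unit $e_L$; every $x \in L$ writes uniquely as $x = \lambda e_L + y$ with $y \in M_L$, giving a canonical rank-one projection $P_L \colon L \to \R e_L$, $P_L x = \lambda e_L$. Second, I would use \Cref{T:finite_sum_local_ideals}: for any $x \in E_+$, the principal ideal $I_x$ decomposes as $\bigoplus_{k=1}^n L_k$ with the $L_k$ the (finitely many) maximal local ideals contained in $I_x$; this gives a \emph{partial} projection onto each $L_k$ defined on $I_x$, and by \Cref{P:local_ideal_in_summand} every local ideal $L \subseteq I_x$ lands inside exactly one $L_k$ (with $L = L_k$ or $L \subseteq M_{L_k}$). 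Third — the heart of the construction — to define $Q_L$ on all of $E$: for $x \in E$, look at $I_{|x|} + L$, which is again a principal ideal hence a finite sum of local ideals; $L$ is comparable to exactly one summand. If $L$ is disjoint from $I_{|x|}$, set $Q_L x = 0$. Otherwise $L$ is contained in, or contains, the relevant summand; using the band/summand projection of $I_{|x|}+L$ onto the summand containing $L$, compose with the already-constructed $Q$'s for the (strictly larger, already-handled) local ideals sitting above $L$ inside that summand, and finally with $P_L$, to land in $L$. The recursion hypothesis supplies exactly the data needed to ``push'' $x$ down into $L$. Fourth, I would verify the three bullet conditions: idempotency and $Q_L(E)=L$ follow because $Q_L$ restricted to $L$ is built from $P_L$ and lower projections which fix $e_L$; disjointness gives $Q_{L_1}Q_{L_2}=0$ because the summand decompositions are by disjoint bands; and the chain condition $Q_1Q_2 = Q_2Q_1 = Q_1$, $Q_1 e_{L_2} = 0$ for $L_1 \subsetneq L_2$ follows from $e_{L_1} \in M_{L_2}$ and the compatibility of the summand projections at each stage. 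Linearity must be checked throughout — the subtlety is that the decomposition $I_{|x|}+L$ depends on $x$, so one must show the resulting map is independent of auxiliary choices and additive, which is where the uniqueness clause in \Cref{T:finite_sum_local_ideals} and \Cref{Lemma: equal quotients equal ideals} do the work.

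\textbf{Main obstacle.} The difficulty is not any single lattice-theoretic fact but the \emph{coherence of the recursion}: each $Q_L$ is defined through a principal ideal $I_{|x|}+L$ that varies with $x$, and along a chain $L_1 \subsetneq L_2 \subsetneq \cdots$ (possibly transfinite) one must ensure the projections telescope correctly and that linearity survives. Concretely, given $x$, one must compare the decomposition of $I_{|x|}+L$ with that of $I_{|x+x'|}+L$ and $I_{|x'|}+L$ inside the common larger principal ideal $I_{|x|+|x'|}+L$, and invoke \Cref{P:local_ideal_in_summand} to see that $L$ sits inside ``the same'' summand in all of them, so the partial projections agree where they overlap. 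Making this bookkeeping precise — essentially showing the $Q_L$ on various principal ideals glue to a well-defined linear operator on $E$ — is the technical crux, and is presumably why the authors defer the proof to a separate section.
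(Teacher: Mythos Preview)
Your high-level plan --- recurse from large local ideals down to small ones, using the Noetherian property to guarantee well-foundedness --- matches the paper's, but the construction you sketch has a real gap that the paper's argument is specifically designed to avoid.

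The problem is your pointwise definition of $Q_L x$ via the decomposition of $I_{|x|}+L$. You correctly flag linearity as ``the technical crux,'' but you do not indicate how to close it, and it is not clear that it \emph{can} be closed along these lines: the summand decomposition of $I_{|x|}+L$, of $I_{|x'|}+L$, and of $I_{|x+x'|}+L$ need not be compatible in any evident way, and the appeal to \Cref{P:local_ideal_in_summand} only tells you which summand $L$ sits in, not that the resulting values add. There is also a smaller slip: composing at the end with your rank-one map $P_L$ forces $\ran(Q_L)\subseteq \R e_L$, so $Q_L(E)=L$ fails except when $L$ is one-dimensional; what you actually need at that step is the complementary projection $L\to M_L$, $\lambda e_L+u\mapsto u$.

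The paper sidesteps the linearity issue entirely by enlarging the recursion. It does \emph{not} recurse only over $\L(E)$; it introduces an auxiliary family $\overline{\I}(E)$ (each element is the direct sum of an equivalence class of local ideals sharing the same strict upper set in $\L(E)$) and recurses over $\overline{\I}(E)\cup\L(E)$. A trichotomy lemma then shows every $I\in\overline{\I}(E)$ is either $E$, a nonzero maximal ideal $M_L$ of some local $L$, or a nonzero intersection of a chain of local ideals with no minimum. At each step $Q_I$ is produced \emph{as a linear operator from the outset}: the identity on $E$; for $M_L$, take $Q_{M_L}:=R\oplus 0$ on $\ran(Q_L)\oplus\ker(Q_L)$ with $R(\lambda e_L+u)=u$; for the limit case, take the projection with range $\bigcap_L \ran(Q_L)$ and kernel any complement extending $\bigcup_L\ker(Q_L)$; and for $L\in\L(E)\setminus\overline{\I}(E)$, use the band projection inside its equivalence-class sum. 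No pointwise gluing is needed, and the compatibility relations are then verified by a separate well-founded induction. Your sketch is missing exactly this enlargement and the limit case; without them the recursion over $\L(E)$ alone does not obviously terminate in a linear map.
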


For a local ideal $L$, we define $\phi_L \colon L \to \R$ by $\phi_L(\lambda e_L + u) := \lambda$. 

\begin{theorem}\label{t:noetherian_char}
Let $E$ be a Noetherian vector lattice equipped with a compatible set of local projections $\{Q_L\}_{L \in \L(E)}$. Let $X := \{\wh{L} \colon L \in \L(E)\}$ be a copy of the local ideals of $E$ equipped with reverse ordering. Then the map $T \colon E \to \R^X_{\lex}$ defined by $Tu(\wh{L}) := \phi_L(Q_L u)$ is an injective lattice homomorphism mapping $e_L$ to the function $e_{\wh{L}}$. Moreover, under the induced identification of $E$ as a sublattice of $\R^X_{\lex}$, the projection $Q_L$ corresponds to the projection $Q_{\wh{L}}$ defined by $Q_{\wh{L}}(f) := f \cdot \mathbf{1}_{\ua \wh{L}}$. 
\end{theorem}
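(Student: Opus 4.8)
The plan is to verify each claimed property of $T$ in turn, using the axioms of a compatible set of local projections together with the structural facts about $\R^X_{\lex}$ established in Section~4.

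\textbf{Well-definedness and linearity.} First I would check that $Tu$ actually lies in $\R^X_{\lex}$, i.e.\ that $\supp(Tu)$ is well-founded. Since $X$ is the set of local ideals under reverse inclusion, it is well-founded (a strictly decreasing sequence of elements of $X$ would be a strictly increasing chain of local ideals, generating a non-stationary increasing sequence of ideals, contradicting \Cref{Noetherian characterization}). By \Cref{L:well-founded closed under unions} every subset of $X$ is well-founded, so this is automatic. Linearity of $T$ is immediate since each $Q_L$ is linear and each $\phi_L$ is linear on $L$.

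\textbf{Action on the basis elements and injectivity.} Next I would compute $Te_L(\wh{L'}) = \phi_{L'}(Q_{L'} e_L)$. If $L' = L$, then $Q_L e_L = e_L$ (as $e_L \in L = Q_L(E)$ and $Q_L^2 = Q_L$, so $Q_L$ fixes $L$ pointwise — here one uses that $Q_L$ restricted to $L$ is a projection onto $L$, hence the identity), giving $\phi_L(e_L) = 1$. If $L'$ and $L$ are disjoint, $Q_{L'} e_L \in Q_{L'} Q_L(E) = \{0\}$, so the value is $0$. If $L' \subsetneq L$, then $Q_{L'} e_L = 0$ by the third axiom, value $0$. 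If $L \subsetneq L'$, then $Q_{L'} e_L = Q_{L'} Q_L e_L = Q_L e_L = e_L$, but then $\phi_{L'}(e_L)$: since $e_L \in L \subsetneq L'$ and $L$ is contained in the maximal ideal $M_{L'}$ of $L'$ (a local ideal properly contained in $L'$ lies in $M_{L'}$), we get $\phi_{L'}(e_L) = 0$. Hence $Te_L = e_{\wh{L}}$. For injectivity, suppose $Tu = 0$; I would argue by well-founded induction on $X$ that $Q_L u = 0$ for all $L$, or more directly: if $u \neq 0$ then $u \in I_u$, which by \Cref{T:finite_sum_local_ideals} and \Cref{finite-dimensional radical} decomposes as $\bigoplus_{k=1}^n L_k$ with each $L_k$ local, and the component of $u$ in a \emph{maximal} such $L_k$ has nonzero $\phi_{L_k}$-value; then $Q_{L_k} u$ picks out that component (using compatibility with the direct sum decomposition), so $Tu(\wh{L_k}) \neq 0$, a contradiction. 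Making this picking-out step precise is the main subtlety and is where I expect to spend the most care — one must check $Q_{L_k}$ acts correctly on $I_u$, which should follow from the disjointness and comparability axioms applied within the finite direct sum.

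\textbf{Lattice homomorphism.} Since $T$ is a linear bijection onto its image, it suffices to show $T$ preserves the positive cone in both directions: $u \geq 0$ in $E$ iff $Tu \in \R^X_{\lex+}$. By \Cref{L:well-founded cone}, $Tu \in \R^X_{\lex+}$ iff $Tu$ is positive at every minimal element of its support. Given $u \geq 0$, decompose $I_u = \bigoplus L_k$; positivity of $u$ in this sum is determined by positivity of the coordinates in the maximal $L_k$'s, which correspond exactly to minimal elements of $\supp(Tu)$ in $X$ (reverse ordering), matching the argument already used in the proof of \Cref{t:Artinian=Lex(P)}. Conversely, if $Tu \in \R^X_{\lex+}$, the same correspondence forces the leading coordinates of $u$ in $I_u$ to be positive, hence $u \geq 0$.

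\textbf{The projection identification.} Finally, for the last sentence I would compute $T(Q_L u)(\wh{L'}) = \phi_{L'}(Q_{L'} Q_L u)$ and compare with $(Tu \cdot \mathbf{1}_{\ua \wh{L}})(\wh{L'})$, which equals $\phi_{L'}(Q_{L'} u)$ if $\wh{L'} \geq \wh{L}$ (i.e.\ $L' \subseteq L$) and $0$ otherwise. When $L' \subseteq L$: if $L' = L$ use idempotence; if $L' \subsetneq L$ use $Q_{L'} Q_L = Q_{L'}$. When $L' \not\subseteq L$: either $L'$ and $L$ are disjoint, giving $Q_{L'} Q_L = 0$, or $L \subsetneq L'$, giving $Q_{L'} Q_L u = Q_L u \in L$, and $\phi_{L'}$ vanishes on $L \subseteq M_{L'}$. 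In all cases the two sides agree, so $T Q_L = Q_{\wh{L}} T$ as desired.

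The main obstacle I anticipate is the injectivity argument — specifically, justifying that the compatible local projections restrict correctly to the finite direct sum $I_u = \bigoplus_{k=1}^n L_k$ so that $Q_{L_k}$ genuinely extracts the $L_k$-component of any $u \in I_u$. This requires showing that on $I_u$ the projections $\{Q_{L_k}\}$ behave like the coordinate projections of the direct sum, which one deduces from the disjointness axiom (for incomparable $L_k$) combined with \Cref{P:local_ideal_in_summand} to rule out unwanted containments; once this is in place, the cone-preservation and projection-identification steps are routine bookkeeping with the three axioms.
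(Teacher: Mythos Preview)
Your overall architecture matches the paper's: the computation of $Te_L = e_{\wh{L}}$, the injectivity via the decomposition $I_u = \bigoplus_{k=1}^n L_k$, and the verification of $TQ_L = Q_{\wh{L}}T$ are all carried out in the paper by exactly the case analyses you describe.

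There is one genuine gap, in the lattice homomorphism step. Your claim ``it suffices to show $T$ preserves the positive cone in both directions'' is false as a general principle: an injective bipositive linear map into a Riesz space need not be a Riesz homomorphism. For instance, $T\colon \R^2 \to \R^3$, $T(x,y) = (x,y,x+y)$ is bipositive, but $T((1,-1)^+) = (1,0,1) \neq (1,0,0) = (T(1,-1))^+$. Bipositivity only gives an order isomorphism onto the image $T(E)$; it does not show $T(E)$ is a \emph{sublattice} of $\R^X_{\lex}$, which is precisely what the theorem asserts. Your appeal to the argument in \Cref{t:Artinian=Lex(P)} does not transfer: there $T$ was a bijection onto all of $\Lex(X)$, so bipositivity did suffice.

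The paper repairs this by computing $T|u|$ and $|Tu|$ directly and showing they coincide. After the case analysis (which you essentially outline) one knows that $m(Tu) = \{\wh{L_1},\ldots,\wh{L_n}\}$ with $Tu(\wh{L_k}) = \lambda_k$, and that for $L \subseteq L_j$ one has $Tu(\wh{L}) = \phi_L(Q_L(\lambda_j e_j + u_j))$. Writing $|u| = \sum_k \sgn(\lambda_k)(\lambda_k e_k + u_k)$ and invoking the explicit absolute value formula from \Cref{T:forest_implies_lattice} for $|Tu|$, both sides are seen to equal $\sgn(\lambda_j)\,Tu(\wh{L})$ on $\ua \wh{L_j}$ and zero elsewhere. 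You already have every ingredient for this computation; you just need to replace the incorrect sufficiency claim with it. (Minor aside: in the decomposition $I_u = \bigoplus L_k$ the $L_k$ are pairwise disjoint, hence all incomparable, so speaking of ``the maximal $L_k$'s'' is redundant --- all of them correspond to minimal elements of $\supp(Tu)$.)
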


\begin{proof}
Let $0 \not= u \in E$, then $I_u \not= 0$ and so $I_u = \oplus_{k=1}^n L_k$ by \Cref{T:finite_sum_local_ideals} and \Cref{finite-dimensional radical}, so $u = \sum_{k=1} (\lambda_k e_k + u_k)$ with $\lambda_k \not= 0$ for all $k$. We first investigate $Tu$. Suppose $L$ is a local ideal disjoint with $L_1, \ldots, L_n$ (for $1 \leq k \leq n$, we write $Q_k$ for $Q_{L_k}$). Then $Q_L u = \sum_{k=1}^n Q_L Q_k (\lambda_k e_k + u_k) = 0$, showing that $Tu(\wh{L}) = 0$. If $L$ is not disjoint with $L_1, \ldots, L_n$, it must be comparable with some $L_j$ by \Cref{L:local_ideals_comparable_or_disjoint}. First assume that $L$ strictly contains $L_j$. By the disjointness of $L_j$ and $L_k$, $L$ cannot be contained in some $L_k$, but it could strictly contain another $L_k$. So for each $k$, either $L$ strictly contains $L_k$, or $L$ is disjoint with $L_k$. In the first case $Q_k$ maps into $M_L$ and so $Q_L Q_k = Q_k$ maps into $M_L$ and thus $\phi_L \circ Q_L Q_k = 0$. In the second case $Q_L Q_k = 0$, and so
$$
Tu(\wh{L}) = \phi_L(Q_L u) = \sum_{k=1}^n \phi_L(Q_L Q_k (\lambda_k e_k + u_k)) = 0.
$$
Finally, suppose that $L$ is contained in some $L_j$. Then for $k \not= j$, $L$ is disjoint with $L_k$, therefore $Q_L Q_k = 0$ and so
$$
Tu(\wh{L}) = \phi_L(Q_L u) = \sum_{k=1}^n \phi_L(Q_L Q_k (\lambda_k e_k + u_k)) = \phi_L(Q_L Q_j(\lambda_j e_j + u_j)) = \phi_L(Q_L(\lambda_j e_j + u_j)).
$$
Now suppose $L = L_j$. Then $Tu(\wh{L}) = \lambda_j \not= 0$; this shows in particular that the function $Tu$ is nonzero, so $T$ is injective. If $L \subsetneq L_j$, then $Q_L e_j = 0$ by assumption and so $Tu(\wh{L}) = \phi_L(Q_L(u_j)) = Tu_j(\wh{L})$. This implies in particular that if $u = e_j$, then $Tu(\wh{L}) = 1$ precisely when $L = L_j$ and $Tu(\wh{L}) = 0$ otherwise. 

Using the disjointness of $\{L_1, \ldots, L_n\}$ and the fact that $|\lambda_k e_k + u_k|$ is the supremum of the totally ordered set $\{ \pm (\lambda_k e_k + u_k)\}$, we compute 
$$
|u| = \left| \sum_{k=1}^n \lambda_k e_k + u_k \right| = \sum_{k=1}^n |\lambda_k e_k + u_k| = \sum_{k=1}^n \sgn(\lambda_k) (\lambda_k e_k + u_k).
$$
It follows from the previous deliberations that $T|u|(\wh{L}) = 0$ whenever $L$ is disjoint from $\{L_1, \ldots, L_n\}$ or strictly contains one of them, $T|u|(\wh{L}) = \sgn(\lambda_j) \lambda_j$ whenever $L = L_j$, and $T|u|(\wh{L}) = T \sgn(\lambda_j)u_j (L)$ whenever $L$ is strictly contained in $L_j$. This can be rephrased as follows: if $L \subseteq L_j$, then $T|u|(\wh{L}) = Tu(\wh{L})$ whenever $\lambda_j > 0$ and $T|u|(\wh{L}) = -Tu(\wh{L})$ whenever $\lambda_j < 0$ (and $T|u|(\wh{L}) = 0$ if $L$ is not contained in some $L_j$).

To determine $|Tu|$, we must consider $m(Tu)$, the minimal elements of the support of $Tu$. Since the ordering on $X$ is reversed, by our previous deliberations, this is exactly $\{L_1, \ldots, L_n\}$. By \Cref{T:forest_implies_lattice}, for $L$ contained in $L_j$, if $\lambda_j = Tu(L_j) > 0$, then $|Tu|(\wh{L}) = Tu(\wh{L})$, and if $\lambda_j = Tu(L_j) < 0$, then $|Tu|(\wh{L}) = -Tu(\wh{L})$ (and $|Tu|(\wh{L}) = 0$ if $L$ is not contained in some $L_j$).

The descriptions of $T|u|$ and $|Tu|$ are the same, so $T|u| = |Tu|$ showing that $T$ is a lattice homomorphism.

It remains to prove the final statement, for which we have to show that $TQ_L = Q_{\wh{L}} T$. So let $u \in E$ and let $I$ be a local ideal of $E$. Then $TQ_Lu(\wh{I})  = \phi_I(Q_I Q_L u)$. By \Cref{L:local_ideals_comparable_or_disjoint}, $I$ and $L$ are disjoint, $I \subseteq L$, or $I \supsetneq L$. In the first case $Q_I Q_L = 0$ and so $TQ_Lu(\wh{I})=0$, in the second case $Q_I Q_L = Q_I$ and so $TQ_Lu(\wh{I}) = Tu(\wh{I})$, and in the third case $Q_I Q_L = Q_L$ and $Q_L e_I = 0$, so $TQ_Lu(\wh{I})  = \phi_I(Q_I Q_L u) = 0$. To summarize, if $I \subseteq L$ then $T Q_L u(\wh{I}) = Tu(\wh{I})$ and $T Q_L u(\wh{I}) = 0$ otherwise. In the space of functions on $X$, since the ordering of $X$ is reversed, this corresponds to the function $f(\wh{I}) = Tu(\wh{I})$ if $\wh{I} \geq \wh{L}$ and $f(\wh{I}) = 0$ otherwise, which is exactly $Q_{\wh{L}} Tu$.
\end{proof}

\begin{remark}
In the notation of \Cref{t:noetherian_char}, for $x \in X$ we denote $C_x := I - Q_x$ to be the \emph{cut} determined by $x$. Then \Cref{t:projections} and \Cref{t:noetherian_char} together imply that every Noetherian vector lattice is isomorphic to a sublattice of $\R^X_{\lex}$ containing $\Lex(X)$ invariant under taking cuts, for some reverse well-founded forest $X$. In 1952, Hausner and Wendel showed a very similar result for totally ordered vector lattices (using very different methods): \cite[Theorem~3.1]{HausnerWendel} shows that if $E$ is a totally ordered vector space (which is then clearly a lattice), then there exists a totally ordered set $X$ such that $E$ is isomorphic to a subspace of $\R^X_{\lex}$ (which is then totally ordered and so is a sublattice of $\R^X_{\lex}$) containing $\Lex(X)$ invariant under taking cuts.
\end{remark}

\subsection{Proof of \Cref{t:projections}.}\label{s:proof_projections}

We first require some elementary preliminaries on projections. An idempotent operator $Q\colon V\to V$ on a vector space is a \emph{projection}. It is easy to see that  $I-Q$ is a projection whenever $Q$ is a projection. Moreover, we have $\ker(I-Q)=\ran(Q)$ and $\ran(I-Q)=\ker(Q)$.  Although the following lemma should be known, we add the proof for the reader's convenience.

\begin{lemma}\label{l:projection_equivalences}
Let $V$ be a vector space and $Q$ and $R$ projections on $V$. Then
\begin{enumerate}
    \item $QR = 0 \Leftrightarrow \ran(R) \subseteq \ker(Q)$.   
    \item $QR = Q \Leftrightarrow \ker(R) \subseteq \ker(Q)$.
    \item $RQ = Q \Leftrightarrow \ran(Q) \subseteq \ran(R)$.
\end{enumerate}
\end{lemma}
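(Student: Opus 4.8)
The plan is to prove each of the three equivalences by unwinding the operator identity into a statement about the action on an arbitrary vector and then invoking nothing beyond the defining relations $Q^2 = Q$ and $R^2 = R$. Two elementary consequences of idempotence will carry all the weight: first, a projection fixes its range pointwise, i.e. $Rw = w$ for every $w \in \ran(R)$ (since $w = Ru$ gives $Rw = R^2 u = Ru = w$), and symmetrically for $Q$; second, $v - Rv \in \ker(R)$ for every $v \in V$ (since $R(v - Rv) = Rv - R^2 v = 0$), and symmetrically for $Q$. I would record these two facts at the outset so that the converse implications below become one line each.

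For part (i), I would simply note that $QR = 0$ means $QRv = 0$ for all $v \in V$, which is exactly the assertion that $Rv \in \ker(Q)$ for every $v$, i.e. $\ran(R) \subseteq \ker(Q)$; here idempotence is not even needed. For part (ii), the forward implication is immediate: if $QR = Q$ and $v \in \ker(R)$, then $Qv = QRv = Q0 = 0$, so $v \in \ker(Q)$. Conversely, assuming $\ker(R) \subseteq \ker(Q)$, for any $v \in V$ we have $v - Rv \in \ker(R) \subseteq \ker(Q)$, hence $Qv - QRv = 0$; since $v$ is arbitrary this is precisely $QR = Q$.

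For part (iii), if $RQ = Q$ then every element of $\ran(Q)$ has the form $Qv = RQv \in \ran(R)$, so $\ran(Q) \subseteq \ran(R)$. Conversely, assuming $\ran(Q) \subseteq \ran(R)$, for any $v \in V$ the vector $Qv$ lies in $\ran(R)$, so $R$ fixes it: $RQv = Qv$; as $v$ is arbitrary, $RQ = Q$. I expect no genuine obstacle in this lemma, as it is purely formal — the only point requiring a moment's care is to have the two idempotence facts stated before proving the converse directions of (ii) and (iii), so that each reduces to a single substitution.
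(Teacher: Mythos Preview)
Your proof is correct. Part (i) is handled identically in the paper. For (ii) and (iii) the paper takes a slightly slicker route: it rewrites $QR = Q$ as $Q(I-R) = 0$ and $RQ = Q$ as $(I-R)Q = 0$, then invokes (i) together with $\ran(I-R) = \ker(R)$ and $\ker(I-R) = \ran(R)$. Your direct arguments are really the same content unpacked --- the observation $v - Rv \in \ker(R)$ is exactly $\ran(I-R) \subseteq \ker(R)$, and ``$R$ fixes $\ran(R)$'' is exactly $\ran(R) \subseteq \ker(I-R)$ --- so the difference is purely organizational: the paper recycles (i), while you verify each equivalence from scratch.
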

\begin{proof}
$(i)$: Suppose $QR = 0$ and let $x \in \ran(R)$. Then $Qx = QRx = 0$, so $\ran(R) \subseteq \ker(Q)$. Conversely, if $\ran(R) \subseteq \ker(Q)$, then $QRx =0$ for all $x \in V$ and so $QR = 0$.

$(ii)$:  Since $QR = Q$ can be rewritten as $Q(I-R)=0$, by $(i)$ we have $QR=Q$ if and only if $\ker(R)=\ran(I-R)\subseteq \ker(Q)$. 

$(iii)$: Similarly as in the proof of $(ii)$, since we can rewrite $RQ = Q$ as $(I-R)Q=0$, by $(i)$ we conclude $\ran(Q)\subseteq \ker(I-R)=\ran(R)$. 
\end{proof}
This lemma immediately shows the equivalences in the next definition.
\begin{definition}\label{d:projections_ordering}
Let $V$ be a vector space and let $Q$ and $R$ be projections on $V$, we define $Q \leq R$ to mean that $\ran(Q) \subseteq \ran(R)$ and $\ker(R) \subseteq \ker(Q)$, or equivalently, $QR = RQ = Q$. We define $Q \perp R$ to mean that $\ran(Q) \subseteq \ker(R)$ and $\ran(R) \subseteq \ker(Q)$, or equivalently, $QR = RQ = 0$.
\end{definition}

Given a Noetherian vector lattice $E$, we consider the poset of local ideals $\L(E)$ equipped with the inclusion ordering. On $\L(E)$, define an equivalence relation by $L_1 \sim L_2$ if $\sua L_1 = \sua L_2$ in $\L(E)$. Each equivalence class consists of incomparable local ideals and so they are disjoint by \Cref{L:local_ideals_comparable_or_disjoint} hence the equivalence class is finite (if not, it contains an infinite sequence $L_k$, and then $I_n := \sum_{k=1}^n L_k$ yields a strictly increasing sequence of ideals). For each equivalence class $\{L_1, \ldots, L_n\}$, we consider the ideal $\oplus_{k=1}^n L_k$; we denote the set of such ideals that are order direct sums of all local ideals of an equivalence class by $\overline{\I}(E)$. To construct the projections, we will use reverse well-founded recursion on $\overline{\I}(E) \cup \L(E)$, equipped with inclusion.

The process of reverse well-founded recursion works as follows: given an ideal $I \in \overline{\I}(E) \cup \L(E)$ and projections $Q_J$ on all ideals $J \in \overline{\I}(E) \cup \L(E)$ that strictly contain $I$, we must construct a projection $Q_I$ on $I$. The recursion theorem then asserts that for all ideals $I \in \overline{\I}(E) \cup \L(E)$, there exists a projection $Q_I$ on $I$ which is constructed in the same way from all the projections $Q_J$ on all ideals $J \in \overline{\I}(E) \cup \L(E)$ strictly containing $I$.

\begin{remark}\label{r:construction_from_bigger_ideal}
In some cases we will construct $Q_I$ from a specific $Q_J$ for some $J$ strictly containing $I$. For this it suffices to construct a projection $R$ on $J$ onto $I$. Indeed, in this case $E \cong \ran(Q_J) \oplus \ker(Q_J)$ (as vector spaces) and we will define $Q_I := R \oplus 0$. It is then clear from \Cref{d:projections_ordering} that $Q_I < Q_J$.
\end{remark}

We will first explain how to construct $Q_L$ if $L \in \L(E) \setminus \ol{\I}(E)$. So let $L_1 \in \L(E) \setminus \ol{\I}(E)$ and suppose its equivalence class is $\{L_1, \ldots, L_n\}$, then $n \geq 2$ otherwise $L_1 \in \ol{\I}(E)$. Therefore the ideal $J := \oplus_{k=1}^n L_k \in \ol{\I}(E)$ strictly contains $I$, and so we can use $Q_J$. By \Cref{r:construction_from_bigger_ideal}, it suffices to find a projection $R$ from $J$ onto $L_1$, and we simply define $R$ to be the band projection onto $L_1$. Note that this procedure defines projections $Q_k$ onto $L_k$ for all $1 \leq k \leq n$, and by construction it is clear from \Cref{d:projections_ordering} that these satisfy $Q_k \perp Q_j$ whenever $1 \leq k < j \leq n$.

It remains to consider $I \in \ol{\I}(E)$, and for this we first require a lemma, which roughly states that $\ol{\I}(E)$ ordered by reverse inclusion consists of an initial element, successor elements, and limit elements. 

\begin{lemma}\label{l:initial_successor_limit}
Let $E$ be a nonzero Noetherian vector lattice. Then all elements of $\ol{\I}(E)$ are of one of the following three distinct forms:
\begin{enumerate}
\item $E$.
\item Nonzero maximal ideals $M_L$ of some local ideal $L$.
\item A nonzero intersection of a nonempty chain of local ideals without a minimum.
\end{enumerate}
\end{lemma}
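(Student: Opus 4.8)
The plan is to fix $I \in \ol{\I}(E)$ and write $I = \bigoplus_{k=1}^n L_k$, where $C := \{L_1,\dots,L_n\}$ is the equivalence class of local ideals whose order direct sum is $I$. Set $S := \sua L_1 = \dots = \sua L_n$, the common set of local ideals strictly containing the members of $C$ (these sets coincide by the very definition of the equivalence relation). As any two members of $S$ both contain $L_1 \ne \{0\}$, they are non-disjoint, hence comparable by \Cref{L:local_ideals_comparable_or_disjoint}, so $S$ is a chain. The three forms in the statement will match the trichotomy ``$S = \emptyset$'', ``$S$ has a least element'', ``$S$ has no least element'', which is visibly exhaustive and mutually exclusive. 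If $S = \emptyset$, then since $E$ is nonzero and Noetherian it has a strong unit, so by \Cref{finite-dimensional radical} and \Cref{T:finite_sum_local_ideals} we may write $E = \bigoplus_{j=1}^r I_j$ with each $I_j$ local; \Cref{P:local_ideal_in_summand} then shows $\{I_1,\dots,I_r\}$ is exactly the collection of maximal local ideals of $E$, i.e. exactly the local ideals $L$ with $\sua L = \emptyset$, so this is the unique class with $S=\emptyset$ and $I = \bigoplus_j I_j = E$.

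Next suppose $S$ has a least element $L_0$. Each $L_k$ is a proper subideal of the local ideal $L_0$ (because $L_0 \in \sua L_k$), hence $L_k \subseteq M_{L_0}$ and so $I \subseteq M_{L_0}$; moreover $M_{L_0} \ne \{0\}$, as otherwise $L_0 \cong \R$ would be minimal and could not strictly contain $L_1 \ne \{0\}$. For the reverse inclusion I would decompose the nonzero Noetherian ideal $M_{L_0} = \bigoplus_l \tilde L_l$ (via \Cref{T:finite_sum_local_ideals} and \Cref{finite-dimensional radical}) and show each $\tilde L_l$ belongs to $C$. Since $L_0 = \min S$, the set $S$ equals $\{L \in \L(E) : L \supseteq L_0\}$; as $\tilde L_l \subsetneq L_0$ this gives $S \subseteq \sua \tilde L_l$. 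For the reverse, if $L'$ is local with $L' \supsetneq \tilde L_l$ then $L'$ is comparable with $L_0$, and $L' \subsetneq L_0$ is impossible: then $L' \subseteq M_{L_0} = \bigoplus_m \tilde L_m$, so $L' \subseteq \tilde L_m$ for some $m$ by \Cref{P:local_ideal_in_summand}, whence $\tilde L_l \subsetneq L' \subseteq \tilde L_m$ forces $m = l$ and the absurdity $\tilde L_l \subsetneq \tilde L_l$. So $L' \supseteq L_0$, i.e. $L' \in S$, giving $\sua \tilde L_l = S = \sua L_1$; thus $\tilde L_l \sim L_1$ and $\tilde L_l \in C$. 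Hence $M_{L_0} = \bigoplus_l \tilde L_l \subseteq \bigoplus_k L_k = I$, so $I = M_{L_0}$.

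Finally suppose $S$ has no least element. The inclusion $I \subseteq \bigcap_{L \in S} L =: I_\infty$ is immediate, since $L_k \subsetneq L$ for every $L \in S$, and $I_\infty \supseteq L_1 \ne \{0\}$ is a nonzero Noetherian ideal, so $I_\infty = \bigoplus_{j=1}^q L_j''$ with each $L_j''$ local (again via \Cref{T:finite_sum_local_ideals} and \Cref{finite-dimensional radical}). The argument now parallels the previous case. Each $L_j'' \subseteq I_\infty$ is strictly contained in every $L \in S$ (equality would exhibit $L_j''$ as a least element of $S$), so $S \subseteq \sua L_j''$. If $L'$ is local with $L' \supsetneq L_j''$, then $L'$ is comparable with every member of the chain $S$; if $L'$ contains some member of $S$ then $L' \supsetneq L_1$, so $L' \in S$, whereas if $L' \subsetneq L$ for all $L \in S$ then $L' \subseteq I_\infty = \bigoplus_m L_m''$, so $L' \subseteq L_m''$ for some $m$ by \Cref{P:local_ideal_in_summand}, and $L_j'' \subsetneq L' \subseteq L_m''$ again forces $m = j$ and the contradiction $L_j'' \subsetneq L_j''$. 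Hence $\sua L_j'' = S$, so $L_j'' \in C$, and therefore $I_\infty = \bigoplus_j L_j'' \subseteq \bigoplus_k L_k = I$, giving $I = \bigcap_{L \in S} L$.

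For distinctness: $E$ is the inclusion-largest member of $\ol{\I}(E)$, and it equals neither any $M_L$ (since $M_L \subsetneq L \subseteq E$) nor any $\bigcap_{L \in S} L$ with $S$ a chain of local ideals lacking a least element (some member of $S$ is then a proper subideal of $E$). A nonzero $M_L$ is never of the latter form either: an $L$ with $M_L = \bigcap S$ would be a local ideal properly containing $\bigcap S \ne \{0\}$, hence comparable with every member of the chain $S = \sua L_1$, which forces $L \in S$; but then $L$ strictly contains some $L'' \in S$, and $L'' \subseteq M_L = \bigcap S \subseteq L''$ exhibits $L''$ as a least element of $S$, a contradiction. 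The main obstacle — common to the successor and limit cases — is controlling $\sua$ of a local summand of the target ideal, i.e. ruling out local ideals wedged strictly between such a summand and the chain $S$; the decisive tool for this is \Cref{P:local_ideal_in_summand} applied to the direct-sum decomposition of the target.
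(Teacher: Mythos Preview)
Your argument is correct and follows essentially the same route as the paper: both fix $I=\bigoplus_k L_k\in\ol{\I}(E)$, consider the chain of local ideals strictly above the $L_k$ (your $S$, the paper's $\mathcal L$), and split into the cases empty/has minimum/nonempty without minimum, in each case decomposing the target ideal ($E$, $M_{L_0}$, or $\bigcap S$) via \Cref{T:finite_sum_local_ideals} and using \Cref{P:local_ideal_in_summand} to show its local summands lie in the equivalence class $C$. The only cosmetic difference is that you treat $S=\emptyset$ separately, whereas the paper folds it into the ``no minimum'' case via $\bigcap\emptyset=E$.

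Two small points. First, your trichotomy as stated is not literally mutually exclusive, since $S=\emptyset$ also has no least element; you clearly intend the third case to read ``$S$ nonempty without least element''. Second, in your distinctness paragraph you slip back into writing ``the chain $S=\sua L_1$'' when arguing that a nonzero $M_L$ cannot equal $\bigcap S$; here $S$ should be an \emph{arbitrary} nonempty chain of local ideals without minimum, and the assertion ``which forces $L\in S$'' is not justified for such a generic $S$. The fix is easy: $L\supsetneq\bigcap S\ne\{0\}$ makes $L$ comparable with every $L'\in S$; if $L\subseteq L'$ for all $L'\in S$ then $L\subseteq\bigcap S=M_L\subsetneq L$, absurd; hence $L\supsetneq L'$ for some $L'\in S$, so $L'\subseteq M_L=\bigcap S$, making $L'$ a minimum of $S$. (The paper, incidentally, does not spell out the distinctness at all.)
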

\begin{proof}
Let $I  := \oplus_{i=1}^n L_i \in \ol{\I}(E)$; we define $\mathcal{L}$ to be the set of local ideals strictly containing $I$. By \Cref{L:local_ideals_comparable_or_disjoint}, $\mathcal{L}$ is totally ordered. We claim that in $\mathcal{L}(E)$, for each $1 \leq k \leq n$, $\sua L_k = \L$. To show this, let $J \in \L$. Then $L_k \subseteq I \subsetneq J$, so $J \in \sua L_k$. Conversely, let $J \in \sua L_k$. Since $L_1, \ldots, L_n$ are equivalent, $J \in \sua L_i$ for all $1 \leq i \leq n$, hence $L_i \subseteq M_J$. Then $I = \oplus_{i=1}^n L_i \subseteq M_J$ thus $I \subsetneq J$, hence $J \in \L$. 

We first consider the case that $\mathcal{L}$ has a minimum $L$, so that $\L = \ua L$. Then $I \subsetneq L$ and so $I \subseteq M_L$, hence $M_L$ is nonzero so $M_L = \oplus_{j=1}^m \tilde{L}_j$ by \Cref{T:finite_sum_local_ideals} and \Cref{finite-dimensional radical}. We claim that in $\mathcal{L}(E)$, for each $1 \leq k \leq m$, $\sua \tilde{L}_k = \ua L$. To show this, let $J \in \ua L$. Then $\tilde{L}_k \subseteq M_L \subsetneq L \subseteq J$, so $J \in \sua \tilde{L}_k$. Conversely, let $J \in \sua \tilde{L}_k$. Then $L$ and $J$ both contain $\tilde{L}_k$ so they are comparable by \Cref{L:local_ideals_comparable_or_disjoint}. If $J \subsetneq L$ then $J \subseteq M_L$, so by \Cref{P:local_ideal_in_summand}, $J \subseteq \tilde{L}_j$ for some $1 \leq j \leq m$, which contradicts $\tilde{L}_k \subsetneq J$. Hence $L \subseteq J$, so that $J \in \ua L$. 

We conclude that $\sua L_i = \sua \tilde{L}_j = \ua L$ for each $i$ and $j$, showing that $\tilde{L}_j$ is equivalent to $L_i$. Since $L_1, \ldots, L_n$ is an entire equivalence class, this shows that for each $1 \leq j \leq m$, there exists a $1 \leq i \leq n$ such that $\tilde{L}_j = L_i$. Hence $M_L \subseteq I$, and so $I = M_L$, which shows $(ii)$.

We now consider the case where $\L$ does not have a minimum. Then $I \subsetneq L$ for each $L \in \L$ and so $I \subseteq \cap \L$, hence $\cap \L$ is nonzero so $\cap \L = \oplus_{j=1}^m \tilde{L}_j$ by \Cref{T:finite_sum_local_ideals} and \Cref{finite-dimensional radical}. We claim that in $\mathcal{L}(E)$, for each $1 \leq k \leq m$, $\sua \tilde{L}_k = \L$. To show this, let $J \in \L$. Then $\tilde{L}_k \subseteq \cap \L \subsetneq J$ since $\L$ does not have a minimum, so $J \in \sua \tilde{L}_k$. Conversely, let $J \in \sua \tilde{L}_k$. Let $L \in \L$, then $L$ and $J$ both contain $\tilde{L}_k$ so they are comparable by \Cref{L:local_ideals_comparable_or_disjoint}. Suppose that $J \subseteq L$ for all $L \in \L$. Then $J \subseteq \cap \L$, so by \Cref{P:local_ideal_in_summand}, $J \subseteq \tilde{L}_j$ for some $1 \leq j \leq m$, which contradicts $\tilde{L}_k \subsetneq J$. Hence there exists an $L \in \L$ with $L \subseteq J$, and since $\L$ is an upper set, $J \in \L$.

We conclude that $\sua L_i = \sua \tilde{L}_j = \L$ for each $i$ and $j$, showing that $\tilde{L}_j$ is equivalent to $L_i$. Since $L_1, \ldots, L_n$ is an entire equivalence class, this shows that for each $1 \leq j \leq m$, there exists a $1 \leq i \leq n$ such that $\tilde{L}_j = L_i$. Hence $\cap \L \subseteq I$, and so $I = \cap \L$. If $\L = \emptyset$ then $\cap \L = E$ showing $(i)$, and if $\L \not= \emptyset$ then $I = \cap \L$ is a nonzero intersection of a nonempty chain of local ideals, showing $(iii)$.
\end{proof}

We now define the recursion on $\ol{\I}(E)$ using \Cref{l:initial_successor_limit}. On $E$, we simply take the identity projection. Given a maximal ideal $M_L \in \ol{\I}(E)$ of some local ideal $L$, we have that $M_L \subsetneq L$ and so by \Cref{r:construction_from_bigger_ideal}, it suffices to construct a projection $R \colon L \to M_L$. We define $R(\lambda e_L + u) := u$ and we note for future reference that $e_L \in \ker(Q_{M_L})$. In the final case where $I = \cap \L$ for the chain of local ideals $\L$ strictly containing $I$, our goal is to recursively define $Q_I$ given $\{ Q_L \colon L \in \L\}$. Consider the subspaces $U = \cap_{L \in \L} \ran(Q_L)$ and $V = \cup_{L \in \L} \ker(Q_L)$. Then $U \cap V = \{0\}$. Extend $V$ to a complementary subspace $W$ of $U$, so that $E \cong U \oplus W$ as vector spaces, and define $Q_I(u+w) := u$. By the construction of $Q_I$, we have that $\ker(Q_L) \subseteq \ker(Q_I)$ and $\ran(Q_I) \subseteq \ran(Q_L)$, so that $Q_I < Q_L$.

We have completed the recursive definition of the projections $\{Q_J \colon J \in \overline{\I}(E) \cup \L(E)\}$, and it remains to verify that these projections satisfy the requirements of \Cref{t:projections}. For this, we first fix a local ideal $L$, and we will show that $Q_I < Q_J$ and $e_L \in \ker(Q_I)$ for all ideals $I \in \ol{\I}(E) \cap \L(E) \cap \da M_L$ by reverse well-founded induction. So let $I \in \ol{\I}(E) \cap \L(E) \cap \da M_L$ satisfy the induction hypothesis
\[
\text{For all ideals }J \supsetneq I \text{ in }\ol{\I}(E) \cap \L(E) \cap \da M_L \text{, } Q_J < Q_L \text{ and }e_L \in \ker(Q_J).
\]
If $I = M_L$, then as noted in the construction, $Q_I < Q_L$ and $e_L \in \ker(Q_I)$. In all other cases it is clear from the construction of $Q_I$ that it is constructed as in \Cref{r:construction_from_bigger_ideal} from some ideal $J \supsetneq I$ in $\ol{\I}(E) \cap \L(E) \cap \da M_L$ such that $Q_I < Q_J$. Therefore $Q_I < Q_J < Q_L$, and since $e_L \in \ker(Q_J)$ it follows that $e_L \in \ker(Q_I)$. 

The above argument shows in particular that if $L_1$ and $L_2$ are local ideals with $L_1 \subsetneq L_2$, then $Q_1 Q_2 = Q_2 Q_1 = Q_1$ and $e_2 \in \ker(Q_1)$. It remains to show that if $L_1$ and $L_2$ are disjoint local ideals, then $Q_1 \perp Q_2$. For this, consider the totally ordered and reverse well-founded (hence reverse well-ordered) subsets $\ua L_1$ and $\ua L_2$ in $\L(E)$. The subset of $\ua L_1$ consisting of elements disjoint with $L_2$ is nonempty (it contains $L_1$) and hence has a maximum $L_3$; similarly, let $L_4$ be the maximum of $\ua L_2$ disjoint with $L_1$. Let $L \in \sua L_3$. Then $L_1 \subseteq L$ and by definition of $L_3$, $L$ is not disjoint from $L_2$. By \Cref{L:local_ideals_comparable_or_disjoint}, $L$ is comparable with $L_2$. It cannot be smaller than $L_2$ otherwise $L_1 \subseteq L \subseteq L_2$, so $L_2 \subseteq L$. Conversely, let $L$ be a local ideal containing $L_1$ and $L_2$. Then $L \in \ua L_1$ and so it is comparable with $L_3$. If $L \subseteq L_3$ then $L_3$ would not be disjoint with $L_2$, so $L \in \sua L_3$. Hence $\sua L_3$ consists of all local ideals containing both $L_1$ and $L_2$. By the same argument, $\sua L_4$ is the same set, so $L_3$ and $L_4$ are equivalent: let $\{L_3, L_4, \ldots, L_n\}$ ($n \geq 4$) be its equivalence class. The construction of $Q_3$ and $Q_4$ using the band projections from $I := \oplus_{k=3}^n L_k$ ensures that $Q_3 \perp Q_4$. Since $Q_1 \leq Q_3$ and $Q_2 \leq Q_4$ it follows by \Cref{d:projections_ordering} that $Q_1 \perp Q_2$, as required.

In conclusion, we have the following characterization of Noetherian vector lattices.

\begin{theorem}\label{T:characterisation of Noetherian vector lattices}
    A vector lattice $E$ is Noetherian if and only if it is isomorphic to a sublattice of $\R^X_{\mathrm{lex}}$ containing $\mathrm{Lex}(X)$ for some well-founded forest $X$ with finite width.
\end{theorem}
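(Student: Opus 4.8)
The plan is to assemble the machinery built up in the preceding sections; essentially no genuinely new work is required, and the equivalence splits into two implications. For the \emph{if} direction, suppose $E$ is isomorphic to a sublattice of $\R^X_{\lex}$ containing $\Lex(X)$ for some well-founded forest $X$ of finite width. This is exactly the hypothesis of \Cref{t:char_lex_noetherian}, which yields that $E$ is Noetherian, so nothing more is needed here.

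For the \emph{only if} direction, suppose $E$ is Noetherian. If $E = \{0\}$, take $X = \emptyset$. Otherwise, for each local ideal $L \in \L(E)$ pick a strong unit $e_L$ of $L$ (which exists because $L \cong \R \circ M_L$), and invoke \Cref{t:projections} to obtain a compatible set of local projections $\{Q_L\}_{L \in \L(E)}$. Then \Cref{t:noetherian_char} produces an injective lattice homomorphism $T \colon E \to \R^X_{\lex}$, where $X := \{\wh{L} \colon L \in \L(E)\}$ carries the reverse-inclusion ordering and $Te_L = e_{\wh{L}}$. Hence $E$ is isomorphic, via $T$, to the sublattice $T(E)$ of $\R^X_{\lex}$, and since the functions $e_{\wh{L}}$ span $\Lex(X)$ and all lie in $T(E)$, we obtain $\Lex(X) \subseteq T(E)$.

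It then remains to verify that $X$ is a well-founded forest of finite width. That $X$ is a forest: for fixed $L$, the elements of $\da \wh{L}$ correspond to the local ideals containing $L$; any two such are non-disjoint, as they both contain the nonzero ideal $L$, hence comparable by \Cref{L:local_ideals_comparable_or_disjoint}, so $\da \wh{L}$ is a chain. That $X$ is well-founded: a strictly decreasing sequence $\wh{L_1} > \wh{L_2} > \cdots$ in $X$ would be a strictly increasing sequence $L_1 \subsetneq L_2 \subsetneq \cdots$ of ideals of $E$, which is impossible since $E$ is Noetherian, so $X$ is well-founded by \Cref{L:well-founded equivalences}. That $X$ has finite width: an infinite antichain in $X$ corresponds to infinitely many pairwise incomparable, hence pairwise disjoint (again by \Cref{L:local_ideals_comparable_or_disjoint}), local ideals $L_1, L_2, \ldots$, and then $I_n := \sum_{k=1}^n L_k$ is a strictly increasing sequence of ideals, once more contradicting Noetherianity.

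The genuinely hard part of this characterization has already been dispatched: it is the existence of a compatible set of local projections, \Cref{t:projections}, proved by reverse well-founded recursion in \Cref{s:proof_projections}, together with the verification that $T$ in \Cref{t:noetherian_char} is a lattice homomorphism. Relative to those, the present theorem is pure bookkeeping. The only point deserving a moment's care is the inclusion $\Lex(X) \subseteq T(E)$, which relies on each $e_L$ being a genuine element of $E$ — but this is immediate, since $e_L$ was chosen inside $L \subseteq E$.
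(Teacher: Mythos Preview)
Your proposal is correct and follows the same approach as the paper: the ``if'' direction is \Cref{t:char_lex_noetherian}, and the ``only if'' direction combines \Cref{t:projections} with \Cref{t:noetherian_char} to produce the embedding $T$, then checks that $X$ has the required properties. You are in fact slightly more explicit than the paper, which leaves the verification that $X$ is a forest with finite width largely implicit (the forest property was noted earlier, before \Cref{t:Artinian=Lex(P)}, and one could alternatively recover well-foundedness and finite width by applying \Cref{t:char_lex_noetherian} to $T(E)$ once $X$ is known to be a forest).
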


\section{Archimedean prime Artinian vector lattices}
As mentioned in the preliminary section of this paper, the collection of ideals containing a fixed prime ideal consists entirely of prime ideals and forms a chain. Therefore, it is a natural question to ask in which vector lattices any decreasing sequence of prime ideals is stationary, which we will refer to as prime Artinian vector lattices. The counterpart of this class of vector lattices, referred to as prime Noetherian, where every increasing sequence of prime ideals is stationary, were studied in \cite{Kandic-Roelands}. The main result of this section characterising the uniformly complete prime Artinian vector lattices states that these are precisely the lattices of functions with finite support on some nonempty set.   

\begin{theorem}\label{T:prime Artinian C(K)}
Let $K$ be a compact Hausdorff space. Then $C(K)$ is prime Artinian if and only if $K$ is finite.
\end{theorem}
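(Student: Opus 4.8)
The plan is to prove both implications, with the reverse direction being routine and the forward direction (by contraposition) requiring an explicit construction of an infinite strictly decreasing chain of prime ideals. For the ``if'' direction, when $K$ is finite we have $C(K) \cong \R^{|K|}$ with the coordinatewise order, which is finite-dimensional and therefore has only finitely many ideals; since a decreasing sequence taking finitely many values must stabilise, $C(K)$ is prime Artinian (indeed Artinian, by \Cref{Archimiedean Art=Noeth}).

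For the ``only if'' direction I argue by contraposition: assuming $K$ is infinite, I produce a strictly decreasing sequence of prime ideals in $C(K)$. The construction hinges on building a well-chosen positive function. Pick distinct points $x_1, x_2, \dots \in K$; by compactness the infinite set $\{x_n : n \in \N\}$ has an accumulation point $p$, and since this set is infinite every neighbourhood of $p$ contains infinitely many $x_n$; discarding one term if necessary, I may assume $x_n \neq p$ for all $n$. As $K$ is completely regular, choose $v_n \in C(K)$ with $0 \le v_n \le 1$, $v_n(p) = 0$ and $v_n(x_n) = 1$, and set $u := \sum_{n=1}^{\infty} 2^{-n} v_n \in C(K)$. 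Then $0 \le u \le 1$, $u(p) = 0$, and $u(x_n) \ge 2^{-n} > 0$ for every $n$; in particular $u$ vanishes on no neighbourhood of $p$, i.e.\ $p \notin \mathrm{int}\, Z(u)$, where $Z(u) := u^{-1}(\{0\})$.

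Next, let $\mathcal{N}_p$ be the neighbourhood filter of $p$ and put $S_\epsilon := \{ y \in K : 0 < u(y) < \epsilon \}$ for $\epsilon > 0$. I would first check that $\mathcal{N}_p \cup \{ S_\epsilon : \epsilon > 0 \}$ has the finite intersection property: if a neighbourhood $V$ of $p$ missed some $S_\epsilon$, then $V \cap \{ u < \epsilon \}$ would be a neighbourhood of $p$ on which $u \equiv 0$, contradicting the previous paragraph. Extend this family to an ultrafilter $\mathcal{U}$ on $K$, and for $k \ge 1$ define
\[
P_k := \bigl\{ f \in C(K) : \exists\, C > 0 \text{ with } \{ y \in K : |f(y)| \le C\, u(y)^k \} \in \mathcal{U} \bigr\}.
\]
The routine verifications are: each $P_k$ is an order ideal (solidity is immediate from upward closure of $\mathcal{U}$, the subspace property from the triangle inequality and adding the constants $C$); $P_k$ is proper, since $\{ u < C^{-1/k} \} \in \mathcal{N}_p \subseteq \mathcal{U}$ forces $\{ u^k \ge 1/C \} \notin \mathcal{U}$ and hence $\mathbf{1} \notin P_k$; $P_k$ is prime, because $f \wedge g = 0$ makes $K \setminus Z(f)$ and $K \setminus Z(g)$ disjoint, so at most one lies in $\mathcal{U}$, and the other has its zero set in $\mathcal{U}$, putting that factor in $P_k$ with $C = 1$; $P_{k+1} \subseteq P_k$ because $0 \le u \le 1$ gives $u^{k+1} \le u^k$; and $P_{k+1} \neq P_k$ because $u^k \in P_k$ while $u^k \notin P_{k+1}$, as $\{ y : u(y)^k \le C u(y)^{k+1} \} = Z(u) \cup \{ u \ge 1/C \} = K \setminus S_{1/C} \notin \mathcal{U}$ for every $C > 0$. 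This yields the strictly decreasing chain $P_1 \supsetneq P_2 \supsetneq \cdots$ of prime ideals, so $C(K)$ is not prime Artinian.

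The step I expect to be the real obstacle is the construction of $p$ and $u$ for a completely arbitrary infinite compact Hausdorff $K$: one cannot in general find $u$ with $Z(u) = \{p\}$ (for instance on $[0,\omega_1]$ every continuous function vanishing at $\omega_1$ vanishes on a neighbourhood of it), nor may one assume $p$ is the limit of a sequence, so the summation trick $u = \sum 2^{-n} v_n$ over the separating functions at an accumulation point of an arbitrary countable set is what makes the argument go through uniformly. After $u$ is in hand, everything reduces to the ultrafilter bookkeeping above, which is the same mechanism that exhibits the failure of prime Artinianity for $\ell^\infty = C(\beta\N)$.
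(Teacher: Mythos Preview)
Your proof is correct, but it proceeds quite differently from the paper's argument. The paper works in the forward direction: assuming $C(K)$ is prime Artinian, it fixes an \emph{algebraic} prime ideal $P$ and any $0\le f\le \mathbf{1}$ with $f\notin P$, applies the prime Artinian hypothesis to the decreasing chain $(P+I_{f^n})_n$, and from the stabilisation $f^m\in P+I_{f^{m+1}}$ extracts (via Riesz decomposition and the $f$-algebra identity $f^m\wedge \lambda f^{m+1}=f^m(\mathbf{1}\wedge\lambda f)$) that $\mathbf{1}-\mathbf{1}\wedge\lambda f\in P$, hence $P+I_f=C(K)$. Thus every algebraic prime is maximal; Gillman--Jerison then gives $J_x=M_x$ for all $x$, and Zaanen's Theorems~34.1 and~34.3 finish the argument that $K$ is finite.

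By contrast, you argue by contraposition and give a self-contained explicit witness: from an accumulation point $p$ of a countable set you build a single $u\ge 0$ with $u(p)=0$ that does not vanish on any neighbourhood of $p$, and then use an ultrafilter refining both $\mathcal{N}_p$ and the sets $\{0<u<\epsilon\}$ to define the order primes $P_k$ measuring growth against $u^k$. What this buys you is independence from the Gillman--Jerison machinery and the specific $C(K)$ results in Zaanen; the price is the ultrafilter lemma and a somewhat longer bookkeeping list. The paper's route, on the other hand, is shorter once one is willing to quote that external structure theory, and it yields the stronger intermediate conclusion that every (algebraic) prime ideal is already maximal. It is worth noting that both arguments ultimately hinge on the same phenomenon---powers $f^n$ of a function with $0\le f\le\mathbf{1}$ generate a strictly descending chain of principal ideals---but exploit it at opposite ends: the paper feeds it into an arbitrary prime, you feed it into a purpose-built ultrafilter.
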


\begin{proof}
Suppose that $C(K)$ is prime Artinian and let $P$ be an algebraic prime ideal in $C(K)$. Hence, $P$ is also an order prime ideal, see \cite[p. 211]{Zaanen}. Let $f \in C(K)\setminus P$ be such that $0 \le f \le \mathbf{1}$. Denote by $I_n$ for $n \in \mathbb{N}$ the order ideal generated by $f^n$. It follows that $I_{n+1} \subseteq I_n$ for all $n \in \mathbb{N}$ and so $(P+ I_n)_{n\in\N}$ is a decreasing sequence of prime ideals. By assumption, there is an $m \in \mathbb{N}$ so that $P + I_n = P+ I_{n+1}$ for all $n \ge m$. It follows that  
\[
f^m = q + h \le |q| + |h|
\]
for some $q \in P$ and some $h \in I_{m+1}$. Hence, by the Riesz decomposition property there is a positive $p$ in $P$ and a positive $g$ in $I_{m+1}$ such that $f^m = p + g$. Note that $g \le f^m$ and as $g \le \lambda f^{m+1}$ for some $\lambda > 0$, we have $g \le f^m \wedge \lambda f^{m+1} = f^m(\mathbf{1}\wedge \lambda f)$. It follows that 
\[
f^m(\mathbf{1} - \mathbf{1}\wedge \lambda f) = f^m - f^m(\mathbf{1}\wedge \lambda f) \le p,
\]
so $\mathbf{1} - \mathbf{1}\wedge \lambda f \in P$ as $f^m$ is not in $P$ by assumption. Hence, $P + I_1 = C(K)$, and we conclude that $P$ is maximal as for any $f \in C(K) \setminus P$ the order ideal generated by $f$ is also generated by $|f|/\|f\|$.

Let $x \in K$ and consider the ideal $J_x$ consisting of functions $f$ such that $f^{-1}(\{0\})$ is a neighborhood of $x$. Note that if $y \neq x$, then there is an open neighborhood $O_x$ of $x$ whose closure does not contain $y$ as $K$ is a compact Hausdorff space. Hence, by Urysohn's lemma, $M_x$ is the only maximal ideal that contains $J_x$. By \cite[Theorem~2.8]{Gillman-Jerison} we have that $J_x$ is the intersection of all algebraic prime ideals in $C(K)$ that contain $J_x$, so we must have that $J_x = M_x$ as we have shown that all algebraic prime ideals are maximal. By \cite[Theorem~34.1]{Zaanen} it follows that every order prime ideal in $C(K)$ is maximal, and by \cite[Theorem~34.3]{Zaanen} we have that $K$ must be finite.   

Conversely, if $K$ is finite, then $C(K) \cong \mathbb{R}^{\# K}$ has precisely $\# K$ prime ideals, and must therefore be prime Artinian.  
\end{proof}

\begin{theorem}\label{T: prime Artinian unif complete}
Let $E$ be an Archimedean uniformly complete vector lattice. Then $E$ is prime Artinian if and only if it is lattice isomorphic to $c_{00}(\Omega)$ for some nonempty set $\Omega$.
\end{theorem}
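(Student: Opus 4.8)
The plan is to prove the two implications separately, with essentially all the work in the forward direction. For the easy direction, suppose $E = c_{00}(\Omega)$ with $\Omega \neq \emptyset$. Every ideal of $c_{00}(\Omega)$ is of the form $c_{00}(A)$ for some $A \subseteq \Omega$: taking $A$ to be the union of the supports of the elements of an ideal $I$ gives $I \subseteq c_{00}(A)$, while for each $\omega \in A$ some $f \in I$ has $f(\omega) \neq 0$, so $|e_\omega| \le |f(\omega)|^{-1}|f|$ forces $e_\omega \in I$ and hence $c_{00}(A) \subseteq I$. Consequently $c_{00}(\Omega)/c_{00}(A) \cong c_{00}(\Omega \setminus A)$, which is totally ordered precisely when $|\Omega \setminus A| \le 1$; since a prime ideal is proper and has totally ordered quotient, the prime ideals of $c_{00}(\Omega)$ are exactly the (pairwise incomparable) maximal ideals $M_\omega := c_{00}(\Omega \setminus \{\omega\})$. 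Hence every decreasing sequence of prime ideals is constant and $c_{00}(\Omega)$ is prime Artinian.

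For the converse, let $E$ be a nonzero Archimedean uniformly complete prime Artinian vector lattice. The key reduction is that prime Artinian-ness passes to ideals: if $I \subseteq E$ is an ideal then $I$ is prime Artinian. Given a prime ideal $P$ of $I$, I would consider
\[
\hat P := \{ y \in E : |y| \wedge |z| \in P \text{ for all } z \in I \}.
\]
Using subadditivity of $z \mapsto c \wedge z$ on the positive cone, $\hat P$ is an ideal of $E$; it is proper because $\hat P = E$ would force $z = |z| \wedge |z| \in P$ for all $z \in I$, contradicting $P \subsetneq I$; and it is prime because if $a \wedge b = 0$ (so $a,b \ge 0$) with $a \notin \hat P$, witnessed by $a \wedge z_0 \notin P$ for some $0 \le z_0 \in I$, then for every $0 \le z \in I$ we have $(a \wedge z_0)\wedge(b \wedge z) \le a \wedge b = 0$ with both factors in $I$, so primality of $P$ in $I$ forces $b \wedge z \in P$, whence $b \in \hat P$. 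Since $\hat P \cap I = P$ (test $y \in I$ against $z = |y|$), the map $P \mapsto \hat P$ is an order-preserving injection from the prime ideals of $I$ into those of $E$, so a strictly decreasing sequence of prime ideals of $I$ would yield one in $E$ — impossible.

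Applying this with $I = I_x$ for an arbitrary $0 < x \in E$: as $E$ is Archimedean and uniformly complete, $(I_x, \|\cdot\|_x)$ is a Banach lattice with strong unit, hence lattice isometric to $C(K_x)$ for some compact Hausdorff $K_x$ by the Kakutani representation theorem, and $I_x$ is prime Artinian by the reduction, so \Cref{T:prime Artinian C(K)} makes $K_x$ finite. Thus every $I_x$ is finite-dimensional, hence lattice isomorphic to $\R^{n_x}$, and its atoms are atoms of $E$ (an atom of the ideal $I_x$ generates a minimal ideal of $E$). Since every positive element of $E$ lies in some $I_x$, the space $E$ is the linear span of its atoms; choosing one positive generator $a_\omega$ per minimal ideal and indexing these by a set $\Omega$, the $a_\omega$ are pairwise disjoint, hence linearly independent, hence a Hamel basis of $E$, with $\Omega \neq \emptyset$ because $E \neq \{0\}$. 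Then $a_\omega \mapsto e_\omega$ extends to a vector space isomorphism $E \to c_{00}(\Omega)$ which is a lattice isomorphism, since positivity of a finite linear combination of pairwise disjoint positive vectors is coordinatewise.

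I expect the reduction step to be the main obstacle: lifting a prime ideal of an ideal $I$ to a prime ideal of $E$ that restricts to it. The formula for $\hat P$ resolves this, but verifying that $\hat P$ is a proper prime ideal is where genuine lattice-theoretic care is needed; the remaining ingredients — the prime spectrum of $c_{00}(\Omega)$, the Kakutani representation, \Cref{T:prime Artinian C(K)}, and the assembly of the atomic Hamel basis — are routine. (Note that $c_{00}(\emptyset) = \{0\}$ is vacuously prime Artinian, which is why nonemptiness of $\Omega$ appears; I would simply exclude $E = \{0\}$.)
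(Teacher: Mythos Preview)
Your proof is correct and follows the same overall strategy as the paper: for the forward direction, reduce to principal ideals $I_x$, invoke \Cref{T:prime Artinian C(K)} via the Kakutani representation to conclude each $I_x$ is finite-dimensional, and then assemble $E$ from its atoms; for the converse, show that every prime ideal of $c_{00}(\Omega)$ is maximal.

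The one place where you diverge from the paper is the reduction step ``$E$ prime Artinian $\Rightarrow$ $I_x$ prime Artinian''. The paper simply cites \Cref{ArtNoeth ideals}, but that proposition is stated and proved for Artinian (and Noetherian) vector lattices, not for prime Artinian ones, and its proof for ideals uses that ideals of $J$ are ideals of $E$ --- which does not carry over verbatim, since a prime ideal of $I_x$ need not be prime in $E$. Your explicit construction of $\hat P = \{y \in E : |y|\wedge|z| \in P \text{ for all } z \in I\}$ and the verification that $P \mapsto \hat P$ is an order-preserving injection from the prime spectrum of $I$ into that of $E$ is exactly the missing ingredient, and your checks (ideal, proper, prime, $\hat P \cap I = P$) are all correct. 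So at this point your argument is in fact more carefully justified than the paper's. The remaining steps --- identifying the prime ideals of $c_{00}(\Omega)$ with the $M_\omega$, and building the lattice isomorphism $E \cong c_{00}(\Omega)$ from the atomic Hamel basis --- are spelled out by you and handled in the paper by citations to \cite{Kandic-Roelands} and \cite[Theorem~61.4]{Zaanen}, respectively; the content is the same.
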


\begin{proof}
Suppose that $E$ is a prime Artinian vector lattice and let $x \in E$ be positive. Then the principal order ideal $I_x$ is a uniformly complete vector lattice with a strong unit. Since $I_x$ is prime Artinian by \Cref{ArtNoeth ideals}, it follows from \Cref{T:prime Artinian C(K)} that $I_x$ is finite-dimensional. Hence, $E$ is lattice isomorphic to $c_{00}(\Omega)$ for some nonempty set $\Omega$ by \cite[Theorem~61.4]{Zaanen}. Conversely, if $E$ is lattice isomorphic to $c_{00}(\Omega)$ for some nonempty set $\Omega$, then every prime ideal is maximal by \cite[Proposition~3.1]{Kandic-Roelands}, so that $E$ is prime Artinian.
\end{proof}

\begin{corollary}
Let $X$ be a nonempty topological space. Then the following statements are equivalent.
\begin{itemize}
    \item[$(i)$] $C(X)$ is prime Artinian.
    \item[$(ii)$] $C(X)$ is prime Noetherian.
    \item[$(iii)$] $X$ is finite.
\end{itemize}
\end{corollary}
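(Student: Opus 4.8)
The plan is to prove the corollary by showing that each of (i) and (ii) is equivalent to (iii), leaning on the two main theorems of this section. I would begin by recording the three standing facts that make that machinery applicable: with the pointwise order $C(X)$ is Archimedean; it is relatively uniformly complete (a relatively uniform Cauchy net with continuous regulator converges pointwise to a function that is again continuous, by a routine $\eps$-argument); and the constant function $\mathbf{1}$ is a strong unit of $C(X)$.

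The implications (iii) $\Rightarrow$ (i) and (iii) $\Rightarrow$ (ii) form the easy direction: if $X$ is finite, then $C(X)$ is a linear subspace of the $\#X$-dimensional space $\R^X$, hence finite-dimensional, and therefore has only finitely many ideals; a monotone sequence of prime ideals then takes only finitely many values and so is eventually constant, so $C(X)$ is simultaneously prime Artinian and prime Noetherian.

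For (i) $\Rightarrow$ (iii) I would apply \Cref{T: prime Artinian unif complete}: since $C(X)$ is Archimedean, uniformly complete, and prime Artinian, it is lattice isomorphic to $c_{00}(\Omega)$ for some nonempty set $\Omega$. The strong unit $\mathbf{1}$ transports to a strong unit of $c_{00}(\Omega)$; but a finitely supported function on $\Omega$ cannot dominate $e_\omega$ for $\omega$ outside its support, so $\Omega$ must be finite and $C(X) \cong \R^{\#\Omega}$. A vector lattice isomorphic to $\R^n$ has exactly $n$ maximal ideals (its coordinate hyperplanes; cf.\ \Cref{t:ideals_upper_sets} with $X$ an $n$-element antichain). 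On the other hand, for each $x \in X$ the set $M_x := \{f \in C(X) \colon f(x) = 0\}$ is an ideal of codimension one --- evaluation at $x$ maps $C(X)$ onto $\R$ with kernel $M_x$ --- hence is maximal, and $x \mapsto M_x$ is injective because $C(X)$ separates the points of $X$. Therefore $\#X \le \#\Omega < \infty$, which is (iii).

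For (ii) $\Rightarrow$ (iii) I would argue in the same way, using in place of \Cref{T: prime Artinian unif complete} the analogous description of prime Noetherian uniformly complete vector lattices from \cite{Kandic-Roelands}; alternatively one reduces (ii) to (i) via the equivalence of prime Artinian and prime Noetherian for Archimedean uniformly complete lattices noted in the introduction. The step I expect to require the most care is the final one in the (i) $\Rightarrow$ (iii) argument, namely the passage from ``$C(X)$ finite-dimensional'' to ``$X$ finite'': this uses that $C(X)$ separates the points of $X$, which holds when $X$ is completely regular and Hausdorff --- the customary setting for $C(X)$ --- but not for every topological space, since an infinite indiscrete space $X$ already has $C(X) \cong \R$. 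In that generality one should either impose this mild separation hypothesis or understand (iii) as a statement about the completely regular Hausdorff reflection of $X$; all remaining steps are direct invocations of results established earlier in the paper.
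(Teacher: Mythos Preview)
Your argument follows the same route as the paper's: apply \Cref{T: prime Artinian unif complete} (respectively \cite[Theorem~6.8]{Kandic-Roelands}) to obtain $C(X)\cong c_{00}(\Omega)$, and then use the unit $\mathbf{1}$ to force $\Omega$ finite. The only cosmetic difference is that the paper invokes $\mathbf{1}$ as a \emph{weak} order unit rather than a strong unit (either suffices, since $c_{00}(\Omega)$ has a weak order unit iff $\Omega$ is finite), and then simply asserts that $X$ is finite without further justification. Your caveat about the passage from ``$C(X)$ finite-dimensional'' to ``$X$ finite'' is well taken: as your indiscrete example shows, this step genuinely requires that $C(X)$ separate the points of $X$, so the corollary as stated for an arbitrary topological space needs either a separation hypothesis or the reading you suggest in terms of the Tychonoff reflection---a point the paper glosses over.
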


\begin{proof}
Suppose $C(X)$ is prime Artinian (prime Noetherian). Since $C(X)$ is uniformly complete by \cite[Theorem~43.1]{Zaanen}, it follows from \Cref{T: prime Artinian unif complete} and \cite[Theorem~6.8]{Kandic-Roelands} that $C(X)$ is lattice isomorphic to $c_{00}(\Omega)$ for some nonempty set $\Omega$. As $\mathbf{1}$ is a weak order unit in $C(X)$ and $c_{00}(\Omega)$ has a weak order unit if and only if $\Omega$ is finite, it follows that $X$ must be finite as well. Conversely, if $X$ is finite, then $C(X)$ has exactly $\# X$ prime ideals, so it must be prime Artinian (prime Noetherian). 
\end{proof}

\begin{remark}
If we compare \Cref{T: prime Artinian unif complete} and \cite[Theorem~6.8]{Kandic-Roelands}, we conclude that an Archimedean uniformly complete vector lattice is prime Artinian if and only if it is prime Noetherian. As illustrated by the vector lattice in the next section and the vector lattice of piecewise polynomials $\mathrm{PPol}([a,b])$ in \cite[Section~7]{Kandic-Roelands}, there are examples of non-uniformly complete Archimedean vector lattices with a strong unit that are prime Artinian but not prime Noetherian, and vice versa. 
\end{remark}

\subsection{Vector lattices of root functions and polynomials around zero}
In order to show the difference between prime Artinian and prime Noetherian vector lattices, in particular that these properties do not imply each other in general, a class of vector lattices consisting of continuous piecewise root functions and polynomials are studied. It turns out that we can completely describe the prime ideal structure of these lattices, and use this to indicate the structural differences between these two concepts.

Let $K:=\{\alpha_n \colon n\in\mathbb{N}\}\cup\{0\}$ where $(\alpha_n)_{n\in\N}$ is a strictly decreasing null sequence in $\mathbb{R}$. Further, let $S \subseteq \mathbb{Q}_{>0}$ and put $S_0:= S\cup\{0\}$, with the ordering on $S$ and $S_0$ coming from $\Q$. Define $R_S(K) \subseteq C(K)$ to be the set of continuous functions on $K$ such that for all $f \in R_S(K)$ there is an $\epsilon > 0$ such that 
\[
f(t) = \sum_{k=0}^n a_k t^{s_k} \qquad(t \in [0,\epsilon)\cap K)
\]
for some $n\in \mathbb{N}$, and where $s_k \in S_0$. It follows that the representation of $f$ locally around zero is unique, and that $R_S(K)$ is a vector sublattice of $C(K)$. Indeed, if 
\[
\sum_{k=0}^n a_k t^{s_k} = \sum_{k=0}^m b_k t^{s'_k}\qquad(t\in[0,\epsilon)\cap K)
\]
for some $\epsilon >0$, then there is a number $M \in\mathbb{N}$ large enough so that the substitution $t \mapsto t^M$ yields a polynomial equation. Hence, all the nonzero coefficients must be equal, $n=m$, and $s_k=s'_k$ for all $0\le k\le n$. A similar argument shows that the pointwise supremum of two such functions is of the same type, as such functions can only have finitely many roots. By the lattice version of the Stone-Weierstrass theorem, the vector lattice $R_S(K)$ is uniformly dense in $C(K)$, and the function defined by $f(t):=t\sin(\frac{\pi}{t})$ on $K\setminus\{0\}$ and $f(0):=0$ is continuous on $K$ but is not in $R_S(K)$. Hence, $R_S(K)$ is not uniformly complete. 

By \cite[Lemma~4.1]{Kandic-Roelands} the maximal ideals in $R_S(K)$ are of the form 
\[
M_{t_0}:=\{f\in R_S(K)\colon f(t_0)=0\}=R_S(K)\cap\{f\in C(K)\colon f(t_0)=0\}
\]
for $t_0\in K$. Note that none of the maximal ideals in $R_S(K)$ will be uniformly complete either. Furthermore, by \cite[Lemma~4.1]{Kandic-Roelands} for any prime ideal in in $R_S(K)$ there is a unique $t_0 \in K$ such that $P\subseteq M_{t_0}$. Note that for $t_0 = \alpha_n$ the only prime ideal contained in $M_{\alpha_n}$ is $M_{\alpha_n}$ itself. Indeed, if $P \subseteq M_{\alpha_n}$ is a prime ideal in $R_S(K)$, then for any positive $f \in M_{\alpha_n}$ it follows that $f\wedge \delta_{t_0}=0$ for the point mass function in $t_0$, but $\delta_{t_0}\notin P$, so $f\in P$ and hence, $P=M_{\alpha_n}$. Define 
\[
P_0:=\left\{f\in R_S(K)\colon \mbox{there is an $\epsilon>0$ such that }f(t)=0 \mbox{ for all } t\in [0,\epsilon)\cap K\right\}.
\]
For $P_0$ we can prove the following.
\begin{lemma}\label{L:minimal prime in R_0}
$P_0$ is the unique minimal prime ideal in $R_S(K)$ that is contained in $M_0$.
\end{lemma}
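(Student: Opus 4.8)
The plan is to show two things: first, that $P_0$ is a prime ideal contained in $M_0$, and second, that it is the smallest such prime ideal. For primality, the cleanest route is to use the characterization in the first (unnumbered) theorem of the preliminaries: $P_0$ is prime if and only if $f \wedge g = 0$ implies $f \in P_0$ or $g \in P_0$. So suppose $f, g \ge 0$ with $f \wedge g = 0$; I would look at the local representations $f(t) = \sum a_k t^{s_k}$ and $g(t) = \sum b_k t^{s'_k}$ valid on $[0,\epsilon) \cap K$ for a common $\epsilon$. The key observation is that a nonzero function of this root-polynomial type has only finitely many zeros near $0$ (indeed, after the substitution $t \mapsto t^M$ it becomes a genuine polynomial, which has finitely many roots), so it cannot vanish on a whole neighborhood of $0$ unless it is identically $0$ there. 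Hence if neither $f$ nor $g$ lies in $P_0$, then on every neighborhood of $0$ both are not identically zero; but since their representations become polynomials after a common substitution $t \mapsto t^M$, and two nonzero polynomials in $t^{1/M}$ with nonnegative coefficients... — actually the cleaner argument is: $f \wedge g = 0$ forces $fg = 0$ pointwise, so the polynomial (in $t^{1/M}$) obtained from $fg$ is identically zero, hence one of the two factor polynomials is identically zero, hence the corresponding function vanishes on $[0,\epsilon)\cap K$, i.e. lies in $P_0$. That $P_0 \subseteq M_0$ is immediate since every $f \in P_0$ satisfies $f(0) = 0$, and that $P_0$ is a proper ideal (and genuinely an order ideal — closed under the ideal property $|h| \le |f|$, $f \in P_0 \Rightarrow h \in P_0$) is routine from the definition, since if $|h| \le |f|$ and $f$ vanishes on $[0,\epsilon)\cap K$ then so does $h$.

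For minimality, I would take an arbitrary prime ideal $P \subseteq M_0$ and show $P_0 \subseteq P$. Let $0 \le f \in P_0$, so $f$ vanishes on $[0,\epsilon) \cap K$ for some $\epsilon > 0$. Pick any point $\alpha_N \in K$ with $\alpha_N < \epsilon$; then consider a positive function $g \in R_S(K)$ that is supported near $\alpha_N$ — for instance, one could use that $R_S(K)$ contains functions behaving like point masses or bump functions away from $0$ (the function $\delta_{t_0}$ used earlier in the excerpt, or a suitable continuous bump). Actually the right move mirrors the argument already given in the excerpt for $M_{\alpha_n}$: since $f$ vanishes near $0$, its support is contained in $\{t \in K : t \ge \epsilon\}$, which is a finite set $\{\alpha_1, \ldots, \alpha_{N-1}\}$ together with nothing else (as $(\alpha_n)$ is a null sequence, only finitely many $\alpha_n$ exceed $\epsilon$). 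So $f$ is supported on finitely many isolated points $\alpha_1, \dots, \alpha_{N-1}$, none equal to $0$. Then $f \wedge \delta_0 = 0$ where $\delta_0$ is... no — instead, for each such isolated point $\alpha_j$ in the support, one has a function disjoint from $f$ that is not in $P$ (because it does not vanish at $0$, and $P \subseteq M_0$ consists of functions vanishing at $0$... wait, that's the wrong direction).

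Let me reconsider the minimality step: the correct approach is to write $f = \sum_{j} f \cdot \mathbf{1}_{\{\alpha_j\}}$ (a finite sum over the isolated support points, each summand lying in $R_S(K)$ since isolated points carry genuine coordinate functionals), and show each summand $f_j := f\cdot\mathbf 1_{\{\alpha_j\}}$ lies in $P$. Fix $j$; since $\alpha_j \ne 0$ there is a function $h_j \in R_S(K)$ with $h_j(\alpha_j) = 0$ but $h_j(0) \ne 0$ — e.g. $h_j(t) = |t - \alpha_j|$ truncated appropriately, or simply a function equal to $t$ near $0$ and vanishing at $\alpha_j$. Then $f_j \wedge h_j$... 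I would instead use: $f_j$ is disjoint from any function supported away from $\alpha_j$; in particular there is a positive $w_j \in R_S(K)$ with $w_j(\alpha_j) > 0$, $w_j(0) = 0$, but actually I want a $w_j \notin P$ disjoint from $f_j$. Since $P \subseteq M_0$ is prime, and $f_j \wedge w_j = 0$ for a suitable $w_j \notin M_0 \supseteq P$, primality gives $f_j \in P$. The function $w_j$ I want vanishes on a neighborhood of $\alpha_j$ but not at $0$: since $\alpha_j$ is isolated in $K$ and $\alpha_j \ne 0$, such a $w_j$ exists in $R_S(K)$ (locally it's $t^0 = 1$ near $0$... but we need it in $R_S(K)$, so near $0$ it equals a root-polynomial, e.g. the constant $1$ which is $t^0$, $0 \in S_0$, fine, then it tapers to $0$ before reaching $\alpha_j$ and stays $0$ there). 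Then $w_j \notin M_0$ hence $w_j \notin P$, and $f_j \wedge w_j = 0$ since their supports are disjoint ($f_j$ supported at $\alpha_j$ only, $w_j$ zero near $\alpha_j$), so primality of $P$ forces $f_j \in P$. Summing over $j$ gives $f \in P$, and since $P_0$ is generated by its positive elements, $P_0 \subseteq P$.

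**The main obstacle.** The genuinely delicate point is the primality of $P_0$: one must argue carefully that $f \wedge g = 0$ in $R_S(K)$, combined with the local root-polynomial structure, forces one of $f, g$ to vanish identically on a neighborhood of $0$ — this is where the "finitely many zeros after the substitution $t \mapsto t^M$" lemma is essential, and one must be careful that the common denominator $M$ and the common $\epsilon$ can be chosen simultaneously for both $f$ and $g$. The minimality direction is more routine once one observes that an element of $P_0$ has finite support consisting of isolated points bounded away from $0$, reducing to the point-mass disjointness trick already used in the excerpt for the $M_{\alpha_n}$ case.
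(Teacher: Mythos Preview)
Your proof is correct, and the primality step is essentially the paper's argument: both rely on the fact that the local root-polynomial representation becomes a genuine polynomial after the substitution $t \mapsto t^M$ and hence has only finitely many zeros, so $f \wedge g = 0$ forces one of $f,g$ to vanish on a neighbourhood of $0$. Your phrasing via the pointwise product $fg = 0$ is a clean way to package this.

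Where you diverge from the paper is in the minimality step. You decompose a positive $f \in P_0$ as a finite sum of point masses $f_j = f \cdot \mathbf{1}_{\{\alpha_j\}}$ and, for each $j$, build a separate witness $w_j \notin M_0$ with $f_j \wedge w_j = 0$, then invoke primality of $P$ term by term. The paper does this in a single stroke: since $f$ vanishes on $[0,\epsilon)\cap K$, the function $g$ defined by $g(t) = 1$ on $[0,\delta)\cap K$ (for some $0 < \delta < \epsilon$) and $g(t) = 0$ elsewhere lies in $R_S(K)$, satisfies $f \wedge g = 0$ directly, and is not in $P$ because $g(0) = 1$. One application of primality then gives $f \in P$. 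Your route works, but the paper's single global witness avoids the decomposition into point masses and the separate construction of the $w_j$.
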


\begin{proof}
It is straightforward to check that $P_0$ is an ideal in $R_S(K)$. Let $f,g \in R_S(K)$ be such that $f \wedge g = 0$. Suppose $f(t)=\sum_{k=0}^n a_kt^{s_k}$ and $g(t)=\sum_{k=0}^m b_kt^{s_k}$ for all $t\in [0,\epsilon)\cap K$, for some $\epsilon>0$. Since $f$ and $g$ can have only finitely many zeros if they are not constantly zero, it follows that there is a $0<\delta<\epsilon$ so that either $f$ or $g$ is zero for $t\in [0,\delta)\cap K$. Hence, either $f \in P_0$ or $g\in P_0$, showing that $P_0$ is a prime ideal in $R_S(K)$. Let $P$ be a prime ideal in $R_S(K)$ such that $P\subseteq M_0$. Suppose that there is a nonnegative function $f$ in $P_0$ that is not in $P$. Then there is an $\epsilon>0$ such that $f(t)=0$ for all $t\in [0,\epsilon)\cap K$.  
Define, for $0<\delta<\epsilon$, the function $g$ by $g(t):=1$ for all $t\in [0,\delta)\cap K$ and $g(t):= 0$, otherwise. It follows that $g \in R_S(K)$ and $f \wedge g = 0$, but $g \notin P$. Hence, $P_0 \subseteq P$ and $P_0$ is therefore the unique minimal prime ideal contained in $M_0$. 
\end{proof}

Define the linear map $\psi\colon M_0 \to \mathrm{Lex}(S)$ by linearizing

\begin{equation}\label{E: psi(S)}
    \psi(t^{s_k}):= e_{s_k}.
\end{equation}
Note that this map is well defined, since every function in $R_S(K)$ is uniquely defined locally around zero.   

\begin{proposition}\label{P: M_0/P = Lex(-N)}\label{P: M_0/P_0}
The linear map $\psi$ given by \eqref{E: psi(S)} is a surjective Riesz homomorphism with kernel $P_0$.
\end{proposition}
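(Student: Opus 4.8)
The plan is to verify the three assertions — surjectivity, that the kernel is $P_0$, and that $\psi$ is a Riesz homomorphism — separately; the first two are essentially bookkeeping, and only the third requires a small observation about the behaviour of the functions near zero. Throughout I use that the local representation $f(t) = \sum_{k} a_k t^{s_k}$ of an $f \in R_S(K)$ around zero is unique (established above), and that $R_S(K)$ is already known to be a sublattice of $C(K)$, so that $f^+ \in R_S(K)$ for every $f \in R_S(K)$. First I record that for $f \in M_0$ the local representation has no constant term: by uniqueness together with $f(0) = 0$, the coefficient of $t^0$ vanishes, so only exponents in $S \subseteq \Q_{>0}$ occur, and hence $\psi(f)$ indeed lands in $\mathrm{Lex}(S)$. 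Note also that since $S$ is totally ordered, $\mathrm{Lex}(S)$ is a totally ordered vector lattice by \Cref{T:forest_implies_lattice}.

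For surjectivity, given $g \in \mathrm{Lex}(S)$ with finite support $F = \supp(g) \subseteq S$, the function $h(t) := \sum_{s \in F} g(s)\, t^{s}$ restricted to $K$ is continuous on $K$, lies in $R_S(K)$ by construction, and satisfies $h(0) = 0$ because all exponents are strictly positive; hence $h \in M_0$ and $\psi(h) = g$. For the kernel, $\psi(f) = 0$ means that all coefficients $a_k$ in the local representation of $f$ vanish, i.e.\ $f$ is identically zero on $[0,\epsilon) \cap K$ for some $\epsilon > 0$; this is precisely the defining condition of $P_0$, and $P_0 \subseteq M_0$ is clear, so $\ker \psi = P_0$.

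It remains to show that $\psi$ is a Riesz homomorphism, for which, $\psi$ being linear, it suffices to check $\psi(f^+) = \psi(f)^+$ for all $f \in M_0$. The key observation is a \emph{sign lemma}: if $f \in M_0 \setminus P_0$ has local representation $f(t) = \sum_{k=0}^n a_k t^{s_k}$ with $s_0 < s_1 < \dots < s_n$ and $a_0 \ne 0$, then $f(t)/t^{s_0} \to a_0$ as $t \to 0^+$, so there is a $\delta > 0$ with $\sgn f(t) = \sgn a_0$ for all $t \in (0,\delta) \cap K$. Consequently, if $a_0 > 0$ then $f^+ = f$ on $[0,\delta) \cap K$, so $\psi(f^+) = \psi(f)$; and since $s_0 = \min \supp(\psi(f))$ with $\psi(f)(s_0) = a_0 > 0$, \Cref{L:well-founded cone} gives $\psi(f) \geq 0$, whence $\psi(f)^+ = \psi(f)$. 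If $a_0 < 0$ then $f^+ = 0$ on $[0,\delta) \cap K$, so $f^+ \in P_0$ and $\psi(f^+) = 0$; and $\psi(f) \leq 0$ by the same token, so $\psi(f)^+ = 0$. Finally, if $f \in P_0$ then $f^+ \in P_0$ as well, so both sides are $0$. In every case $\psi(f^+) = \psi(f)^+$, so $\psi$ is a Riesz homomorphism, and the first isomorphism theorem then yields $M_0/P_0 \cong \mathrm{Lex}(S)$.

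The only genuine content is the sign lemma in the last paragraph — the elementary fact that near zero a function is dominated by its lowest-order term, which is exactly the mechanism by which the pointwise lattice operations on $M_0$ translate into the lexicographic lattice operations on $\mathrm{Lex}(S)$; everything else is routine. A minor point to keep honest is that $f^+$, a priori only known to be continuous and in $R_S(K)$, has near zero precisely the local form dictated by the sign lemma, which is what allows one to read off $\psi(f^+)$ from the three cases above.
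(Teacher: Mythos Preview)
Your proof is correct and follows essentially the same approach as the paper: both arguments hinge on the observation that the sign of $f$ near zero is determined by the coefficient of the lowest-order term in its local representation (your ``sign lemma'', the paper's factoring out of $t^{s_1}$), and both then read off the lattice operation accordingly. The only cosmetic difference is that you verify $\psi(f^+) = \psi(f)^+$ while the paper verifies $\psi(|f|) = |\psi(f)|$, and you spell out surjectivity and the kernel computation more explicitly than the paper does.
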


\begin{proof}
It should be clear that $\psi$ is surjective. Suppose $f \in M_0$ is such that 
\[
f(t) = \sum_{k=1}^n a_k t^{s_k} = t^{s_1}\left(a_1 + a_2t^{s_2-s_1} + \dots + a_n t^{s_n-s_1}\right), 
\]
where $0<s_1<\dots<s_n$, when $t$ is sufficiently close to $0$ and $a_k \neq 0$ for all $1 \le k \le n$. If $a_1>0$, then for $t$ sufficiently close to $0$ it follows that $f(t)\ge 0$, so $|f|=f$ for $t$ sufficiently close to $0$. Hence, $\psi(f)>\psi(-f)$ and so $\psi(|f|)=\psi(f)=|\psi(f)|$. If $a_1<0$, then similarly, it follows that $|f|=-f$ for $t$ sufficiently close to $0$, so that $\psi(f)<\psi(-f)$ and $\psi(|f|)=\psi(-f)=|\psi(-f)|=|-\psi(f)|=|\psi(f)|$. Hence, the map $\psi$ is a Riesz homomorphism. It is straightforward to check that $P_0 = \ker \psi$. 
\end{proof}

By Theorem~\ref{t:ideals_upper_sets} the ideal structure of $\mathrm{Lex}(S)$ is completely determined by the upper sets $\mathcal{U}_S$ of $S$, that is, a subset $U \in \mathcal{U}_S$ is an upper set if and only if 
\[
I_U := \left\{f\in \mathrm{Lex}(S) \colon \mathrm{supp}(f) \subseteq U \right\}
\]
is an ideal in $\mathrm{Lex(S)}$. The prime ideal structure $\mathcal{P}(R_S(K))$ of $R_S(K)$ is therefore completely described by 
\begin{equation}\label{E: primes in R_S(K)}
\mathcal{P}(R_S(K)) = \mathcal{U}_S \sqcup (K\setminus\{0\}). 
\end{equation}
In order to understand whether the maximal ideal $M_0$ in $R_S(K)$ is prime Noetherian or prime Artinian, we will use that these properties are preserved by taking quotients with (minimal) prime ideals. 

\begin{lemma}\label{L: quotient is Art Noeth}
Let $E$ be a vector lattice. Then the following statements are equivalent.
\begin{itemize}
    \item[$(i)$] $E$ is prime Noetherian (prime Artinian).
    \item[$(ii)$] $E/P$ is Noetherian (Artinian) for all prime ideals $P\subseteq E$.
    \item[$(iii)$] $E/P$ is Noetherian (Artinian) for all mininal prime ideals $P\subseteq E$.
\end{itemize}
\end{lemma}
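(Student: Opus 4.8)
The proof is a routine unwinding of the definitions of ``prime Noetherian/Artinian'' (every chain of prime ideals is stationary) combined with the correspondence between ideals of a quotient $E/P$ and ideals of $E$ containing $P$, together with \Cref{ArtNoeth ideals} applied to quotients. I will prove the equivalences only in the prime Noetherian case, since the prime Artinian case is verbatim the same with ``increasing'' replaced by ``decreasing'' (and I will say so explicitly rather than write it twice).

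\textbf{Proof of \Cref{L: quotient is Art Noeth}.}
We prove the statements for the (prime) Noetherian case; the (prime) Artinian case is proved in exactly the same way, replacing every increasing sequence by a decreasing one.

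$(i) \Rightarrow (ii)$: Suppose $E$ is prime Noetherian and let $P \subseteq E$ be a prime ideal. Let $\pi \colon E \to E/P$ be the quotient map. Recall that $E/P$ is a vector lattice whose ideals are precisely the sets $\pi(J)$ for ideals $J$ with $P \subseteq J \subseteq E$, and since $E/P$ is totally ordered (as $P$ is prime), \emph{every} ideal of $E/P$ is of this form; moreover, $J \mapsto \pi(J)$ is an inclusion-preserving bijection between $\{J \colon P \subseteq J \subseteq E\}$ and the ideals of $E/P$, with inverse $\bar{J} \mapsto \pi^{-1}(\bar{J})$. Every ideal of $E$ containing the prime ideal $P$ is itself prime by \cite[Theorem~33.3]{Zaanen}. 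Hence an increasing sequence $(\bar{J}_n)_{n \in \N}$ of ideals of $E/P$ pulls back to an increasing sequence $(\pi^{-1}(\bar{J}_n))_{n \in \N}$ of prime ideals of $E$; since $E$ is prime Noetherian this sequence is stationary, and applying $\pi$ and using surjectivity of $\pi$ shows that $(\bar{J}_n)_{n \in \N}$ is stationary. Therefore $E/P$ is Noetherian.

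$(ii) \Rightarrow (iii)$: Immediate, since every minimal prime ideal is in particular a prime ideal.

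$(iii) \Rightarrow (i)$: Let $(P_n)_{n \in \N}$ be an increasing sequence of prime ideals in $E$; we must show it is stationary. Consider the ideal $P := \bigcap_{n \in \N} P_n$. Since the prime ideals lying below a fixed prime form a chain, and here all the $P_n$ contain a common element (e.g. each contains $\{0\}$), one checks directly that $P$ is again a prime ideal: if $x \wedge y = 0$ then for every $n$ either $x \in P_n$ or $y \in P_n$, and since the $P_n$ are increasing, either $x \in P_n$ for all $n$ or $y \in P_n$ for all $n$, whence $x \in P$ or $y \in P$. Let $P'$ be a minimal prime ideal of $E$ contained in $P$; such a $P'$ exists because the set of prime ideals contained in $P$ is nonempty (it contains $P$) and, being a chain, any strictly decreasing sequence in it would be reversed to an increasing sequence and we may appeal to Zorn's lemma to the family of prime ideals contained in $P$ ordered by reverse inclusion (the union of a reverse-chain of prime ideals contained in $P$ is again a prime ideal contained in $P$). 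Then $P' \subseteq P \subseteq P_n$ for all $n$, so each $P_n$ corresponds to an ideal $\pi'(P_n)$ of $E/P'$, where $\pi' \colon E \to E/P'$ is the quotient map, and $(\pi'(P_n))_{n \in \N}$ is an increasing sequence of ideals of $E/P'$. By $(iii)$, $E/P'$ is Noetherian, so this sequence is stationary, and pulling back along $\pi'$ (which is injective on the set of ideals containing $P'$) shows that $(P_n)_{n \in \N}$ is stationary. Hence $E$ is prime Noetherian. $\qed$

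\textbf{Main obstacle.} The only genuinely delicate point is guaranteeing in $(iii) \Rightarrow (i)$ that a given increasing chain of primes sits above a \emph{minimal} prime, for which one needs that the intersection of an increasing chain of primes is prime and that minimal primes below a given prime exist (a Zorn's-lemma argument on the chain of primes contained in $\bigcap_n P_n$, ordered by reverse inclusion, using that the union of such a reverse-chain is again a prime ideal contained in $\bigcap_n P_n$); everything else is the standard ideal-correspondence for quotients combined with \cite[Theorem~33.3]{Zaanen}.
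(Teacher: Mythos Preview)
Your argument follows essentially the same route as the paper's: pull ideals of $E/P$ back to prime ideals of $E$ via \cite[Theorem~33.3]{Zaanen} for $(i)\Rightarrow(ii)$, and for $(iii)\Rightarrow(i)$ pass to a minimal prime below the given chain. Two remarks.

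First, in the Noetherian case your detour through $P:=\bigcap_n P_n$ is unnecessary: since $(P_n)$ is increasing, $\bigcap_n P_n = P_1$, so you only need a minimal prime contained in $P_1$. The paper simply invokes \cite[Theorem~33.7(i)]{Zaanen} for this. (In the Artinian case the intersection is genuinely needed, and the paper cites \cite[Theorem~33.6]{Zaanen} rather than arguing it directly.)

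Second, your Zorn's-lemma justification has a slip. You order the primes contained in $P$ by \emph{reverse} inclusion and then claim that the \emph{union} of a chain furnishes the required upper bound. But an upper bound in reverse inclusion is a prime ideal contained in every member of the chain, i.e.\ a lower bound in ordinary inclusion; the union is an upper bound in ordinary inclusion, hence the wrong direction. What you need is that the \emph{intersection} of a chain of prime ideals is again prime (this is \cite[Theorem~33.6]{Zaanen}), which then serves as the required bound. With ``union'' replaced by ``intersection'' the argument goes through; as written, Zorn would only hand you a maximal prime in $P$, not a minimal one.
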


\begin{proof}
$(i)\Rightarrow (ii)$: Suppose that $E$ is prime Noetherian and let $(Q_n)_{n\in\N}$ be an increasing sequence of ideals in the quotient vector lattice $E/P$ for an arbitrary but fixed prime ideal $P \subseteq E$. For each $k \in \N$ define the ideal $P_k := \pi^{-1}(Q_k)$ for the canonical map $\pi \colon E \to E/P$. Since all these ideals contain the prime ideal $P$, they must be prime ideals themselves as well, and this yields an increasing sequence $(P_n)_{n\in\N}$ of prime ideals by \cite[Theorem~33.3(ii)]{Zaanen}. Since this sequence must be stationary, it follows that $(Q_n)_{n\in\N}$ must be stationary and $E/P$ is Noetherian. It is analogous to show that if $E$ prime Artinian, that then the quotient vector lattice $E/P$ is Artinian for all prime ideals $P \subseteq E$.

$(ii)\Rightarrow (iii)$: This implication should be clear.

$(iii)\Rightarrow (i)$: Suppose that $E/P$ is Noetherian for every minimal prime ideal $P \subseteq E$ and consider an increasing sequence of prime ideals $(P_n)_{n\in\N}$ in $E$. There is a minimal prime ideal $P_0$ such that $P_0 \subseteq P_k$ for all $k \in \N$ by \cite[Theorem~33.7(i)]{Zaanen}. Then $(P_n/P_0)_{n \in \N}$ is an increasing sequence of prime ideals in $E/P_0$, which must be stationary. Hence there is an $m \in \N$ such that $P_{n}/P_0 = P_{n+1}/P_0$ for all $n \ge m$. It follows that for the canonical map $\pi \colon E \to E/P_0$ we have 
\[
P_n = P_n + P_0 = \pi^{-1}(\pi(P_n)) = \pi^{-1}(\pi(P_{n+1})) = P_{n+1} + P_0 = P_{n+1} 
\]
for all $n \ge m$ by \Cref{Lemma: equal quotients equal ideals}, so $E$ must be prime Noetherian. To prove the implication in case $E/P$ is prime Artinian for all minimal prime ideals $P \subseteq E$, we consider a decreasing sequence $(P_n)_{n \in \N}$ of prime ideals in $E/P$. Then the intersection $\bigcap_{n=1}^\infty P_n$ is a prime ideal in $E$ by \cite[Theorem~33.6]{Zaanen}, so it contains a minimal prime ideal $P_0$ by \cite[Theorem~33.7(i)]{Zaanen}. The remainder of the proof is now analogous to the prime Noetherian case.
\end{proof}

Consequently, we obtain the following characterization for vector lattices that are both prime Noetherian and prime Artinian in terms of how large the minimal prime ideals are.

\begin{proposition}\label{P: Art and Noeth iff min primes}
Let $E$ be a vector lattice. Then $E$ is prime Artinian and prime Noetherian if and only if every minimal prime ideal has finite codimension.
\end{proposition}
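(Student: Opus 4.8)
The plan is to deduce the statement by chaining together two results already proved above, namely \Cref{L: quotient is Art Noeth} and \Cref{T: Noeth and Art iff finite dim}; no new construction is needed.

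First I would invoke the equivalence $(i) \Leftrightarrow (iii)$ of \Cref{L: quotient is Art Noeth}, reading it once with every parenthetical term and once ignoring them. This yields two equivalences: $E$ is prime Noetherian if and only if $E/P$ is Noetherian for every minimal prime ideal $P \subseteq E$, and $E$ is prime Artinian if and only if $E/P$ is Artinian for every minimal prime ideal $P \subseteq E$. Taking the conjunction of these two statements, $E$ is both prime Artinian and prime Noetherian precisely when, for every minimal prime ideal $P \subseteq E$, the quotient vector lattice $E/P$ is both Artinian and Noetherian.

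Next I would apply \Cref{T: Noeth and Art iff finite dim}, which asserts that a vector lattice is Artinian and Noetherian if and only if it is finite-dimensional. Combining this with the previous paragraph, $E$ is prime Artinian and prime Noetherian if and only if $E/P$ is finite-dimensional for every minimal prime ideal $P \subseteq E$, which is exactly the assertion that every minimal prime ideal of $E$ has finite codimension. This completes the argument.

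Since every step is a direct appeal to a result established earlier, I expect no genuine obstacle. The only points requiring a little care are that \Cref{L: quotient is Art Noeth} bundles the dual Artinian and Noetherian properties together, so one must apply it twice and intersect the two conclusions rather than reading off a single biconditional; and that the degenerate case is handled automatically, since if $E$ has no prime ideals (for instance $E = \{0\}$) both sides hold vacuously, whereas if $E$ does possess a prime ideal it possesses a minimal one by \cite[Theorem~33.7(i)]{Zaanen}.
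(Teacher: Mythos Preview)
Your proposal is correct and follows essentially the same approach as the paper: both arguments combine \Cref{L: quotient is Art Noeth} with \Cref{T: Noeth and Art iff finite dim} to pass between the prime Artinian/Noetherian property of $E$ and finite-dimensionality of $E/P$ for minimal primes $P$. The only cosmetic difference is that the paper treats the two implications separately (and for the converse observes directly that a finite-dimensional lattice has only finitely many ideals), whereas you package everything as a single chain of biconditionals.
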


\begin{proof}
Suppose that $E$ is prime Artinian and prime Noetherian. If $P$ is a minimal prime ideal, then $E/P$ is Noetherian and Artinian by Lemma~\ref{L: quotient is Art Noeth} and hence, must be finite-dimensional by Theorem~\ref{T: Noeth and Art iff finite dim}. Conversely, if $E/P$ is finite-dimensional for all minimal prime ideals $P \subseteq E$, then $E/P$ is both Noetherian and Artinian as it has only finitely many ideals, so $E$ must be prime Noetherian and prime Artinian by Lemma~\ref{L: quotient is Art Noeth}.  
\end{proof}

Combining \Cref{L: quotient is Art Noeth} and \Cref{T:characterisation of Artinian vector lattices} we see that $M_0$ and $R_S(K)$ are prime Artinian if and only if $-S$ is a well-founded forest with finite width. Indeed, any decreasing sequence of prime ideals in $R_S(K)$ that is not contained in $M_0$, must be a singleton set by \eqref{E: primes in R_S(K)}. Similarly, we have that $M_0$ and $R_S(K)$ are prime Noetherian if and only if $S$ is a well-founded forest with finite width by \Cref{L: quotient is Art Noeth} and \Cref{T:characterisation of Noetherian vector lattices}.

Furthermore, in light of Proposition~\ref{P: Art and Noeth iff min primes}, the examples of Archimedean vector lattices that are prime Noetherian but not prime Artinian and vice versa, must contain a minimal prime ideal that has an infinite-dimensional quotient vector lattice. Since this quotient vector lattice is totally ordered and  Noetherian or Artinian, respectively, by Lemma~\ref{L: quotient is Art Noeth}, the lexicographical lattices $\mathrm{Lex}(\N)$ and $\mathrm{Lex}(-\N)$ are of particular interest. These lattices are infinite-dimensional and correspond to the smallest infinite ordinal. Moreover, $\mathrm{Lex}(\N)$ is Noetherian but not Artinian and $\mathrm{Lex}(-\N)$ is Artinian but not Noetherian by \Cref{T:characterisation of Noetherian vector lattices}, since $-\N$ is not well-founded. Therefore, an Archimedean vector lattice $E$ that has a minimal prime ideal $P$ such that either $E/P \cong \mathrm{Lex}(\N)$ or $E/P \cong \mathrm{Lex}(-\N)$ are in this sense the smallest examples. Similarly, the smallest example of an Archimedean vector lattice $E$ that is neither prime Noetherian nor prime Artinian needs to contain a minimal prime ideal such that $E/P \cong \mathrm{Lex}(\mathbb{Z})$, again by Lemma~\ref{L: quotient is Art Noeth}. 

Hence, if we consider $S=\mathbb{N}$, then $M_0/P_0\cong \mathrm{Lex}(\mathbb{N})$, if we wish to construct $\mathrm{Lex}(-\mathbb{N})$ as a quotient space, we take $S$ to be the set obtained via the order isomorphism $f \colon -\mathbb{N} \to S$ given by $f(-n):=\frac{1}{n}$, so that $M_0 / P_0 \cong \mathrm{Lex}(S) \cong \mathrm{Lex}(-\N)$. For the quotient space $\mathrm{Lex}(\mathbb{Z})$, we use the order isomorphism $f \colon \Z \to S$ defined by

\[
f(n) :=  
\begin{cases}  
n+2  & \text{if }  n \ge 0, \\
- \frac{1}{n} & \text{if } n < 0.
\end{cases}
\]
Similarly, note that $M_0 / P_0 \cong \mathrm{Lex}(S) \cong \mathrm{Lex}(\Z)$ in this case. The vector lattice $R_S(K)$ where $S = \mathbb{N}$ contains increasing sequences of principal prime ideals of arbitrary finite length, say, given by 
\[
( t^n ) \subseteq ( t^{n-1} )\subseteq \dots \subseteq ( t ), 
\]
and the vector lattice $R_S(K)$ where $S$ is identified with $-\mathbb{N}$ contains decreasing sequences of prime ideals of arbitrary finite length, say, given by 
\[
( t^{\frac{1}{n}} ) \supseteq ( t^{\frac{1}{n-1}} ) \supseteq \dots \supseteq ( t ).
\]

As a final remark, it is possible to construct examples of these spaces for any countable well-ordered set $T$, using the useful fact that $T$ is order isomorphic to a subset of $\mathbb{Q}_{> 0}$. The idea of the proof is as follows. Suppose that $T=\{t_1,t_2,\ldots\}$ and assign $f_1(t_1):= 1$. Suppose that $\{t_1,\ldots,t_n\}$ has been order embedded into $\Q_{>0}$ and denote that embedding by $f_n$. We will describe a procedure to extend $f_n$ to $\{t_1,\ldots,t_{n+1}\}$. There are three possible cases to consider. If $t_{n+1} < t_k$ for all $1 \le k \le n$, then define $f_{n+1}(t_{n+1}):=\frac{1}{2}\min_{1 \le k \le n}f_n(t_k)$. If $t_{n+1} > t_k$ for all $1 \le k \le n$, then we define $f_{n+1}(t_{n+1}) := 2\max_{1 \le k \le n}f_n(t_k)$. The only other possibility is that both $L_n := \{t_k \colon t_k < t_{n+1}\}$ and $R_n := \{t_k \colon t_{n+1} < t_k\}$ are nonempty. In this case, put $t_i := \max L_n$ and $t_j := \min R_n$ and define $f_{n+1}(t_{n+1}) := \frac{1}{2}(f_n(t_i) + f_n(t_j))$. In this way we can recursively assign the first $n$ terms of $T$ to rational numbers and the function $f \colon T \to \mathbb{Q}_{> 0}$ defined by $f(t_n):= f_n(t_n)$ yields the desired order isomorphism on its image. By identifying $S$ with $T$, via $f$, we obtain the quotient space $M_0 / P_0 \cong \mathrm{Lex}(S) \cong \mathrm{Lex}(T)$.

\subsection*{Acknowledgements}
The first author is supported by the Slovenian Research and Innovation Agency program P1-0222 and grant N1-0217.

\bibliographystyle{alpha}
\bibliography{prime2}

\end{document}